\newtheorem{theorem}{Theorem}[section] 
\newtheorem{lemma}[theorem]{Lemma}     
\newtheorem{proposition}[theorem]{Proposition}
\title[The circle action on $THH$ of $MU$ and $BP$]
 {The circle action on topological Hochschild homology \\
	of complex cobordism and the Brown--Peterson spectrum}
\author{John Rognes}
\providecommand{\holim}{\mathop{\rm holim}}
\providecommand{\cok}{\mathop{\rm cok}\nolimits}
\providecommand{\Tor}{\mathop{\rm Tor}\nolimits}
\newcommand{\cG}{\mathcal{G}}
\newcommand{\cT}{\mathcal{T}}
\newcommand{\bC}{\mathbb{C}}
\newcommand{\bF}{\mathbb{F}}
\newcommand{\bQ}{\mathbb{Q}}
\newcommand{\bZ}{\mathbb{Z}}
\newcommand{\longto}{\longrightarrow}
\newcommand{\longfrom}{\longleftarrow}
\renewcommand{\:}{\colon}
\begin{document}
\maketitle

\begin{abstract}
We specify exterior generators
in $\pi_* THH(MU) = \pi_*(MU) \otimes E(\lambda'_n \mid n\ge1)$
and $\pi_* THH(BP) = \pi_*(BP) \otimes E(\lambda_n \mid n\ge1)$,
and calculate the action of the $\sigma$-operator on these
graded rings.  In particular, $\sigma(\lambda'_n) = 0$ and
$\sigma(\lambda_n) = 0$, while the actions on $\pi_*(MU)$ and
$\pi_*(BP)$ are expressed in terms of the right units $\eta_R$
in the Hopf algebroids $(\pi_*(MU), \pi_*(MU \wedge MU))$
and $(\pi_*(BP), \pi_*(BP \wedge BP))$, respectively.
\end{abstract}

\section{Introduction}

Let $S$ be the sphere spectrum.  For any (associative) $S$-algebra
$R$, the topological Hoch\-schild homology spectrum $THH(R)$ is the
geometric realization of a cyclic spectrum $[q] \mapsto THH(R)_q = R
\wedge R^{\wedge q}$, see~\cite{BHM93} and~\cite{EKMM97}.  The skeleton
filtration of $THH(R)$ leads to a spectral sequence
$$
E^1_{q,*} = \pi_{q+*}(sk_q THH(R), sk_{q-1} THH(R))
	\Longrightarrow \pi_{q+*} THH(R) \,,
$$
whose $(E^1, d^1)$-term is the normalized chain complex associated to
the simplicial graded abelian group
$$
[q] \mapsto \pi_* THH(R)_q = \pi_*(R \wedge R^{\wedge q}) \,.
$$
The cyclic structure specifies a natural circle action on $THH(R)$, which
we shall treat as a right action.  The cofiber sequence $1_+ \to S^1_+
\to S^1$ is split by a retraction $S^1_+ \to 1_+$ and a stable section
$S^1 \to S^1_+$.  We write $\sigma$ for the composite map $THH(R) \wedge S^1
\to THH(R) \wedge S^1_+ \to THH(R)$ and call the induced homomorphism
$\sigma \: \pi_* THH(R) \to \pi_{*+1} THH(R)$ the (right)
$\sigma$-operator.  It satisfies $\sigma^2 = \eta \sigma$, where $\eta
\in \pi_1(S)$ is the complex Hopf map, so if multiplication by $\eta$
acts trivially on $\pi_* THH(R)$ then $\sigma$ is a differential.

There is a spectral sequence
\begin{equation} \label{eq:TPspseq}
E^2_{*,*} = \hat H^{-*}(S^1; \pi_* THH(R))
	= \bZ[t, t^{-1}] \otimes \pi_* THH(R)
	\Longrightarrow \pi_* THH(R)^{tS^1}
\end{equation}
converging to the homotopy of the circle Tate construction on $THH(R)$,
see~\cite{GM95}, more recently known~\cite{Hes18} as the periodic
topological cyclic homology $\pi_* TP(R)$.  Its initial differential is
given by
$$
d^2(t^n \cdot x) = \begin{cases}
t^{n+1} \cdot \sigma(x) & \text{for $n$ even} \\
t^{n+1} \cdot (\sigma(x) + \eta x) & \text{for $n$ odd.}
\end{cases}
$$
Knowledge of the $\sigma$-operator therefore leads to knowledge of
the $E^3 = E^4$-term of this spectral sequence.
When $\eta$ act trivially on $\pi_* THH(R)$ we can write
$$
E^4_{*,*} = \bZ[t, t^{-1}] \otimes H(\pi_* THH(R), \sigma) \,.
$$
In this paper we determine the $\sigma$-operator on $\pi_* THH(MU)$
and $\pi_* THH(BP)$, where $MU$ is the complex cobordism $E_\infty$
ring spectrum~\cite{Mil60}, \cite{Nov60}, \cite{May77} and $BP$ is
the Brown--Peterson $E_4$ ring spectrum~\cite{BP66}, ~\cite{BM13}.
In these cases $THH(R)$ is an $E_\infty$, resp.~$E_3$, ring spectrum
by \cite{BFV07}, $\sigma$ is a (right) derivation by \cite{AR05}, and
the skeleton and Tate spectral sequences are algebra spectral sequences
\cite{HR}.

In Sections~\ref{sec:fglamc} and~\ref{sec:typ} we review the connection
between complex cobordism and formal group laws, and their $p$-typical
variants, including some explicit formulas in the Hopf algebroids
$$
(\pi_*(MU), \pi_*(MU \wedge MU)) \cong (L, LB) \cong (L, LC)
$$
and
$$
(\pi_*(BP), \pi_*(BP \wedge BP)) \cong (V, VT) \,.
$$
We follow the expositions by Adams~\cite{Ada74} and
Landweber~\cite{Lan73},~\cite{Lan75} of Quillen's theory~\cite{Qui69},
adding some less familiar details about the parametrization of strict
isomorphisms of formal group laws by ``moving coordinates'' using $(L,
LC)$, in place of ``absolute coordinates'' using $(L, LB)$.

In Section~\ref{sec:thhatbc} we obtain isomorphisms of simplicial
commutative rings
$$
\pi_* THH(MU)_\bullet \cong \pi_*(MU) \otimes \beta(B)_\bullet
	\cong \pi_*(MU) \otimes \beta(C)_\bullet \,,
$$
in the spirit of the equivalence $THH(MU) \simeq MU \wedge BBU_+$ of
Blumberg, Cohen and Schlicht\-krull~\cite{BCS10}.  Here $\beta(B)_\bullet$
denotes the simplicial bar construction $[q] \mapsto \beta(B)_q =
B^{\otimes q}$, and similarly for $\beta(C)_\bullet$.  We also obtain
analogous information for $\pi_* THH(BP)_\bullet$.

In Section~\ref{sec:tcaatru} we recognize the circle action on
the $0$-simplices in $THH(MU)$ and $THH(BP)$ as being given by
the right units $\eta_R \: L \to LB \cong LC$ and $\eta_R \: V
\to VT$, respectively, and use this to determine the action of the
$\sigma$-operator on $\pi_* THH(MU)$ and $\pi_* THH(BP)$.  More precisely,
in Proposition~\ref{prop:sigma-etaR} we prove that for $x \in \pi_*(R)$
the homotopy class $\sigma(x) \in \pi_{*+1} THH(R)$ is detected in
$E^\infty_{1,*}$ of the skeleton spectral sequence by the class of $(1
\wedge \pi) \eta_R(x) \in \pi_*(R \wedge R/S) = E^1_{1,*}$.  Here $\eta_R
\: R \cong S \wedge R \to R \wedge R$ and $\pi \: R \to R/S$ are the
evident maps.

According to McClure and Staffeld \cite{MS93}, who credit Andy Baker
and Larry Smith, there are isomorphisms
$$
\pi_* THH(MU) \cong \pi_*(MU) \otimes E(\lambda'_n \mid n\ge1)
$$
with $\lambda'_n$ in degree $2n+1$, and
$$
\pi_* THH(BP) \cong \pi_*(BP) \otimes E(\lambda_n \mid n\ge1)
$$
with $\lambda_n$ in degree $2p^n-1$, at each prime~$p$.  We
strengthen these results, in Theorems~\ref{thm:sigma-Llambdaprimen}
and~\ref{thm:sigma-Vlambdan}, to show that the exterior generators
$\lambda'_n$ and $\lambda_n$ can be chosen so that $\sigma(\lambda'_n)
= 0$ and $\sigma(\lambda_n) = 0$, for all $n\ge1$.  These choices are
naturally connected to the moving coordinates on strict isomorphisms
between formal group laws, or $p$-typical formal group laws, as made
precise in Propositions~\ref{prop:skelTHHMU} and~\ref{prop:skelTHHBP}.
On the other hand, we show in Theorem~\ref{thm:sigma-Len} that
$\sigma(e_3)$ and $\sigma(e_4)$ are nonzero for the alternative
sequence of exterior generators $e_n$ of $\pi_* THH(MU)$ associated,
as in Proposition~\ref{prop:skelTHHMUsplit}, to absolute coordinates.

We can summarize Proposition~\ref{prop:skelTHHBP},
Theorem~\ref{thm:sigma-Vlambdan} and equations~\eqref{eq:Hazewinkel}
and~\eqref{eq:sigmavn-recursive} as follows.

\begin{theorem}
Let $\pi_*(BP) = \bZ_{(p)}[v_n \mid n\ge1]$ where the $v_n$
are the Hazewinkel generators.
The $\sigma$-operator $\sigma \: \pi_* THH(BP) \to \pi_{*+1} THH(BP)$
is a (right) derivation acting on
$$
\pi_* THH(BP) = \pi_*(BP) \otimes E(\lambda_n \mid n\ge1) \,.
$$
It satisfies $\sigma(\lambda_n) = 0$ for all $n\ge1$, while
$\sigma(v_n)$ is recursively determined by the equation
$$
p \lambda_n = \sigma(v_n) + \sum_{i=1}^{n-1} \Bigl(v_{n-i}^{p^i} \lambda_i
        + (p^i \ell_i) v_{n-i}^{p^i-1} \sigma(v_{n-i}) \Bigr) \,.
$$
Here $p^i \ell_i \in \pi_*(BP)$ is
recursively determined by
$$
p \ell_n = \sum_{i=0}^{n-1} \ell_i v_{n-i}^{p^i}
= v_n + \ell_1 v_{n-1}^p + \dots + \ell_{n-1} v_1^{p^{n-1}} \,.
$$
\end{theorem}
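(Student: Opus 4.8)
The plan is to treat this statement as an assembly of the results it is said to summarize, isolating the one genuinely computational ingredient: the recursion for $\sigma(v_n)$. From Theorem~\ref{thm:sigma-Vlambdan} I would take the structural facts that $\pi_* THH(BP) = \pi_*(BP) \otimes E(\lambda_n \mid n\ge1)$, that $\sigma$ is a right derivation, and that $\sigma(\lambda_n) = 0$; from \eqref{eq:Hazewinkel} I take the Hazewinkel recursion. The heart of the matter is the single rational identity
$$
\sigma(\ell_n) = \lambda_n \qquad \text{in } \pi_* THH(BP) \otimes \bQ \,,
$$
where $\ell_n$ is the coefficient of the logarithm of the universal $p$-typical formal group law. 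Granting this, the asserted recursion is purely formal, and conversely it is equivalent to it, so the whole content reduces to establishing this identity.

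To extract the formula I would apply $\sigma$ to \eqref{eq:Hazewinkel}, written as $p\ell_n = v_n + \sum_{i=1}^{n-1} \ell_i v_{n-i}^{p^i}$, working in $\pi_* THH(BP) \otimes \bQ$. Since $\sigma$ is a right derivation and every $v_j$ and $\ell_j$ sits in even degree, I obtain $\sigma(v_{n-i}^{p^i}) = p^i v_{n-i}^{p^i-1}\sigma(v_{n-i})$ and, using $\sigma(\ell_i) = \lambda_i$, the expansion $\sigma(\ell_i v_{n-i}^{p^i}) = v_{n-i}^{p^i}\lambda_i + (p^i\ell_i) v_{n-i}^{p^i-1}\sigma(v_{n-i})$. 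Substituting $\sigma(\ell_n) = \lambda_n$ on the left reproduces exactly
$$
p\lambda_n = \sigma(v_n) + \sum_{i=1}^{n-1}\Bigl(v_{n-i}^{p^i}\lambda_i + (p^i\ell_i) v_{n-i}^{p^i-1}\sigma(v_{n-i})\Bigr) \,.
$$
Because the right-hand side involves $\sigma(v_j)$ only for $j<n$, this determines $\sigma(v_n)$ recursively, with base case $\sigma(v_1) = p\lambda_1$ (matching $\eta_R(v_1) = v_1 + p t_1$). Integrality of each summand is precisely the claim $p^i\ell_i \in \pi_*(BP)$, which I would confirm by an easy induction on the recursion $p\ell_n = \sum_{i=0}^{n-1}\ell_i v_{n-i}^{p^i}$.

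The main obstacle is the identity $\sigma(\ell_n) = \lambda_n$ itself, which is where the topology of the $\sigma$-operator meets the algebra of the Hopf algebroid $(V, VT)$. Here I would invoke Proposition~\ref{prop:sigma-etaR}: for $x \in \pi_*(BP)$ the class $\sigma(x)$ is detected in $E^\infty_{1,*}$ of the skeleton spectral sequence by $(1 \wedge \pi)\eta_R(x)$. Combined with the standard right-unit formula $\eta_R(\ell_n) = \sum_{i=0}^{n} \ell_i t_{n-i}^{p^i}$, whose only summand linear in the $t_j$ is $t_n$ (every term with $i\ge1$ is a $p^i$-th power, hence decomposable, and the $i=n$ term is the left-unit contribution killed by $(1\wedge\pi)$), and with the identification of $\lambda_n$ with $t_n$ from Proposition~\ref{prop:skelTHHBP}, this pins down the leading term of $\sigma(\ell_n)$ as $\lambda_n$. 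The delicate point is promoting this \emph{detection} to an honest equality, since a priori $\sigma(\ell_n)$ could differ from $\lambda_n$ by classes of higher skeletal filtration. I expect this gap to be closed using that $\sigma$ is a genuine differential — $\eta$ acts trivially on $\pi_*THH(BP)$ because it factors through $\pi_1(BP)=0$, so $\sigma^2 = 0$ and $\sigma(\lambda_n)=0$ — together with the derivation property, which forces the higher-filtration corrections to vanish by induction on $n$ and makes \eqref{eq:sigmavn-recursive} an exact identity rather than one valid only modulo filtration.
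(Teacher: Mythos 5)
Your route is essentially the paper's: the summary theorem is assembled from Proposition~\ref{prop:skelTHHBP} and Theorem~\ref{thm:sigma-Vlambdan}, whose proof applies the right derivation $\sigma$ to the Hazewinkel relation \eqref{eq:Hazewinkel} and feeds in the rational identity $\sigma(\ell_n)=\lambda_n$ (Proposition~\ref{prop:sigmaln}), itself obtained from Proposition~\ref{prop:sigma-etaR} and the right-unit formula \eqref{eq:etaRelln} exactly as you describe. Your algebra --- the Leibniz expansion giving \eqref{eq:sigmavn-recursive}, the integrality of $p^i\ell_i$ by induction on \eqref{eq:Hazewinkel}, and the observation that only the $i=0$, $j=n$ term of $\eta_R(\ell_n)$ survives the projection to $V \otimes I(T)/I(T)^2 \otimes \bQ$ --- coincides with the paper's.

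However, your treatment of the one step you flag as ``delicate'' is confused, and as written it is a gap. The indeterminacy in a class detected in $E^\infty_{1,*}$ is not by classes of \emph{higher} skeletal filtration. ``Detected in $E^\infty_{1,*}$'' in Proposition~\ref{prop:sigma-etaR} means that $\sigma(\ell_n)$ lies in the image of $\pi_{2p^n-1}(sk_1 THH(BP)) \otimes \bQ$ --- the proof of that proposition factors the circle action on the $0$-skeleton through the $1$-skeleton, which is precisely why no higher-filtration correction can occur --- and that its residue class in $E^\infty_{1,*} = F_1/F_0$ equals $[t_n]$, where $F_s$ denotes the image of $\pi_*(sk_s THH(BP))$. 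The only ambiguity is therefore by classes of filtration~$0$, i.e.\ by the image of $\pi_{2p^n-1}(BP)\otimes\bQ$, which vanishes because $\pi_*(BP)$ is concentrated in even degrees. Since $\lambda_n$ is, by Proposition~\ref{prop:skelTHHBP} (cf.\ the proof of Proposition~\ref{prop:skelTHHMU}), the unique class in $F_1$ detected by $[t_n]$, this already gives $\sigma(\ell_n)=\lambda_n$ on the nose. Your proposed repair --- an induction on $n$ using $\sigma^2=0$ and the Leibniz rule to force the corrections to vanish --- is both unnecessary and not an argument: nothing in $\sigma^2=0$ controls the difference between two classes having the same image in $E^\infty_{1,*}$. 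Once the detection statement is read correctly, your proof closes and agrees with the paper's.
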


In Section~\ref{sec:tctc} we evaluate the $d^2$-differential
in the circle Tate spectral sequences for $MU$ and $BP$,
in a finite range of degrees, and use this to calculate
the resulting $E^3 = E^4$-term.  For example
$$
E^4 = \bZ[t, t^{-1}] \otimes H(\pi_* THH(BP), \sigma)
	\Longrightarrow \pi_* THH(BP)^{tS^1}
$$
where
\begin{multline*}
H(\pi_* THH(BP), \sigma) \\
= \begin{cases}
\bZ_{(p)}\{1\} & \text{for $*=0$,} \\
\bZ/p\{v_1^{i-1} \lambda_1\}
        & \text{for $* = i(2p-2)+1$, $1 \le i \le p-1$,} \\
\bZ/p^2\{v_1^{p-1} \lambda_1\} & \text{for $* = 2p^2 - 2p + 1$,} \\
\bZ/p^2\{\lambda_2\} & \text{for $* = 2p^2 - 1$,} \\
\bZ_{(p)}/p^2(p+2) \{v_2 \lambda_1 + v_1 \lambda_2\}
        & \text{for $* = 2p^2 + 2p - 3$,} \\
\bZ/p\{\lambda_1 \lambda_2\} & \text{for $* = 2p^2 + 2p - 2$,} \\
0 & \text{for the remaining $* \le 2p^2 + 4p - 6$.}
\end{cases}
\end{multline*}
This appears as Theorem~\ref{thm:HTHHBPsigma} in the body of the
paper.  In particular, we see that while $H(\pi_* THH(BP), \sigma)$ is
concentrated in odd degrees for $0 < * < 2p^2 + 2p - 2$, this ceases to
be true in degree $|\lambda_1 \lambda_2| = 2p^2 + 2p - 2$.

The cyclic structure on $THH(R)$ suffices to define the circle
homotopy fixed points $THH(R)^{hS^1}$ and the circle Tate construction
$THH(R)^{tS^1}$.  When enriched to a cyclotomic structure \cite{BHM93},
\cite{NS18}, these data suffice to define the topological cyclic homology
$TC(R)$, which is a powerful invariant \cite{DGM13} of the algebraic
$K$-theory $K(R)$, especially for connective $S$-algebras~$R$.
The graded rings $\pi_* THH(\bF_p)$ and $\pi_* THH(\bZ)$ were
calculated by B{\"o}kstedt, and formed the basis for calculations of
$TC(\bF_p)$ and $TC(\bZ)$, see \cite{HM97} for the case of the prime
field~$\bF_p$, \cite{BM94} and \cite{BM95} for the integers localized
at an odd prime~$p$, and \cite{Rog98}, \cite{Rog99a}, \cite{Rog99b}
and \cite{Rog99c} for the integers localized at $p=2$.  The topological
Hochschild homology of $R = \ell$ (the Adams summand in
$p$-local connective complex $K$-theory) was worked out for $p\ge5$
in~\cite{MS93} and promoted to a calculation of $TC(\ell)$
in~\cite{AR02}.  In all of these cases, the $\sigma$-operator acts
trivially on $\pi_* THH(R)$.

When $R = S$ the circle action on $THH(R) = S$ is trivial, so the
$d^2$-differential in the circle Tate spectral sequence alternates
between zero and multiplication by $\eta$ in $\pi_* THH(R)
= \pi_*(S)$.  The resulting Tate spectral sequence agrees with the
Atiyah--Hirzebruch spectral sequence for $S^{tS^1} \simeq \Sigma^2
\bC P^\infty_{-\infty}$.  Knowledge of the attaching maps in complex
projective spaces translates to substantial knowledge \cite{Mos68}
of the differential patterns in this spectral sequence.  However, the
limits on our knowledge of $\pi_*(S)$ put bounds on how well we can
understand $TC(S)$ and $K(S)$ by this approach.  Explicit calculations
in low degrees were made in~\cite{Rog02}, \cite{Rog03} and~\cite{BM19},
but it would be desirable to place these in a context of systematic
patterns, similar to the chromatic filtration in stable homotopy theory
\cite{MRW77}, \cite{Rav92}.  By the descent results of \cite{DR18}, a
good understanding of $\pi_* TC(MU \wedge \dots \wedge MU)$ (with one or
more copies of~$MU$) will also determine $\pi_* TC(S)$, through a homotopy
limit or descent spectral sequence.  The problem of determining $TC(MU)$
and $K(MU)$ has therefore been frequently considered, e.g.~by Ausoni
and the author at the time when~\cite{AR02} was completed.  In this case
the $\sigma$-operator acts nontrivially on $\pi_* THH(MU)$, but precise
formulas seem not to have been worked out before this paper.

The author has also pursued a homological approach \cite{BR05} to the
calculations of $THH(R)^{hS^1}$ and $THH(R)^{tS^1}$ for $S$-algebras such
as $R = MU$, working with continuous homology in the category of completed
$A_*$-comodule algebras.  This led, in \cite{LNR12},~\cite{LNR11} and
\cite{BBLNR14} to a proof that there are $p$-adic equivalences
$$
THH(MU)^{hS^1} \overset{\Gamma}\longleftarrow
TF(MU; p) \overset{\hat\Gamma}\longto THH(MU)^{tS^1}
$$
where $TF(MU; p) = \holim_n THH(MU)^{C_{p^n}}$.  This provides the
foundation for a calculation of $\pi_* THH(MU)^{C_{p^n}}$ by induction
on~$n$.  We plan to discuss the homological approach to
$THH(MU)^{tS^1}$ in a future paper.

\section{Formal group laws and moving coordinates}
\label{sec:fglamc}

\subsection{Formal group laws and complex cobordism}

The universal (commutative, $1$-dimensional) formal group law
$$
F(x, y) = x + y + \sum_{i,j\ge1} a_{ij} x^i y^j
$$
is defined over the Lazard ring
$L = \bZ[a_{ij} \mid i,j\ge1] / I$
where
\begin{multline*}
I = (a_{12} - a_{21}, a_{13} - a_{31}, a_{14} - a_{41}, a_{23} - a_{32},
        \dots, \\
2 a_{11} a_{12} + 3 a_{13} - 2 a_{22},
2 a_{12}^2 + 3 a_{11} a_{13} + 4 a_{14} - 2 a_{23}, \\
a_{11}^2 a_{12} + 3 a_{12}^2 + 6 a_{11} a_{13} - a_{11} a_{22} + 6 a_{14} - 3 a_{23}, \dots)
\end{multline*}
is the ideal generated by the coefficients of $x^i y^j$ in $F(x, y)
- F(y, x)$ and of $x^i y^j z^k$ in $F(F(x, y), z) - F(x, F(y, z))$.
Each ring homomorphism $\theta \: L \to R$ determines a formal group law
$$
(\theta_* F)(x, y) = x + y + \sum_{i,j\ge1} \theta(a_{ij}) x^i y^j
$$
defined over~$R$, and this specifies a natural bijection between ring
homomorphisms $L \to R$ and formal group laws defined over $R$.  We say
that $L$ classifies, or corepresents, formal group laws.  We give $L$
the grading where $a_{ij}$ has degree $2(i+j-1)$.  Lazard~\cite{Laz55}
proved that $L \cong \bZ[x_n \mid n\ge1]$ where $x_n$ has degree~$2n$.
Following Adams~\cite{Ada74}*{p.~57}, augmented with a little computer
algebra, we can take
\begin{align*}
x_1 &= a_{11} \\
x_2 &= a_{12} \\
x_3 &= a_{22} - a_{13} \\
x_4 &= a_{14}
\end{align*}
as the first four generators of the Lazard ring.
Quillen~\cite{Qui69}*{Thm.~2} showed that the tensor product formula
for the first Chern class in complex cobordism theory specifies a formal
group law over $\pi_*(MU)$, and that the homomorphism $L \to \pi_*(MU)$
that classifies this formal group law is an isomorphism.

\subsection{Strict isomorphisms}
Let $F$ and $F'$ be formal group laws defined over~$R$.  A strict
isomorphism $f \: F \to F'$ over~$R$ is a formal power series
$$
f(x) = x + \sum_{n\ge1} b_n x^{n+1}
$$
with $b_n \in R$, such that $f(F(x,y)) = F'(f(x), f(y))$.  If $R$
is torsion-free then there is at most one strict isomorphism from $F$
to $F'$.  Let $B = \bZ[b_n \mid n\ge1]$, graded so that $b_n$ has
degree~$2n$.  The tensor product $LB = L \otimes B$ then classifies
diagrams
$$
F \overset{f}\longto F'
$$
where $F$ and $F'$ are formal group laws and $f$ is a strict isomorphism.
Restriction along the inclusions $\eta_L \: L \to LB$ and $\iota \:
B \to LB$ let us recover $F$ and~$f$, respectively, while restriction
along the right unit $\eta_R \: L \to LB$ classifies $F'$.
Continuing Adams' calculations \cite{Ada74}*{p.~63}, we have
\begin{align*}
\eta_R(a_{11}) &= a_{11} + 2 b_1 \\
\eta_R(a_{12}) &= a_{12} + a_{11} b_1 + (3 b_2 - 2 b_1^2) \\
\eta_R(a_{13}) &= a_{13} + a_{11} (2 b_2 - 2 b_1^2)
        + (4 b_3 - 8 b_1 b_2 + 4 b_1^3) \\
\eta_R(a_{22}) &= a_{22} + (2 a_{12} + a_{11}^2) b_1
        + a_{11} (6 b_2 - 3 b_1^2) + (6 b_3 - 6 b_1 b_2 + 2 b_1^3) \\
\eta_R(a_{14}) &= a_{14} - a_{13} b_1 + a_{12} (b_2 - b_1^2)
	+ a_{11} (3 b_3 - 8 b_1 b_2 + 5 b_1^3) \\
&\qquad + (5 b_4 - 14 b_1 b_3 - 6 b_2^2 + 25 b_1^2 b_2 - 10 b_1^4) \\
\eta_R(a_{23}) &= a_{23} + a_{13} b_1 + 2 a_{11} a_{12} b_1
+ a_{12} (8 b_2 - 6 b_1^2)
+ a_{11}^2 (3 b_2 - 2 b_1^2) \\
&\qquad + a_{11} (12 b_3 - 16 b_1 b_2 + 6 b_1^3)
+ (10 b_4 - 16 b_1 b_3 - 3 b_2^2 + 14 b_1^2 b_2 - 4 b_1^4)
\,,
\end{align*}
where we have corrected a (rare) typographical error in Adams' formula
for $\eta_R(a_{22})$.  It follows that
\begin{equation} \label{eq:etaRxn-xnbn}
\begin{aligned}
\eta_R(x_1) &= x_1 + 2 b_1 \\
\eta_R(x_2) &= x_2 + x_1 b_1 + (3 b_2 - 2 b_1^2) \\
\eta_R(x_3) &= x_3 + (2 x_2 + x_1^2) b_1
        + x_1 (4 b_2 - b_1^2) + (2 b_3 + 2 b_1 b_2 - 2 b_1^3) \\
\eta_R(x_4) &= x_4 + (2 x_1 x_2 - 2 x_3) b_1 + x_2 (b_2 - b_1^2)
        + x_1 (3 b_3 - 8 b_1 b_2 + 5 b_1^3) \\
&\qquad + (5 b_4 - 14 b_1 b_3 - 6 b_2^2 + 25 b_1^2 b_2 - 10 b_1^4) \,.
\end{aligned}
\end{equation}

\subsection{Hopf algebroids}
The formal group laws and strict isomorphisms defined over~$R$ form the
objects and morphisms of a small groupoid $\cG(R)$, depending functorially
on the commutative ring~$R$.  The identity morphism $id_F \: F \to F$
is the formal power series $id_F(x) = x$, which is classified by an
augmentation $\epsilon \: LB \to L$.  It has the form $id \otimes
\epsilon$, where $\epsilon \: B \to \bZ$ maps each $b_n$ to zero.
The composite of two strict isomorphisms $f \: F \to F'$ and $f' \: F'
\to F''$ is a strict isomorphism $f' f \: F \to F''$.  The composition
pairing $\circ$ is a natural function
\begin{align*}
\cG(R)(F', F'') \times \cG(R)(F, F') &\longto \cG(R)(F, F'') \\
\circ \: (f', f) &\longto f' f \,.
\end{align*}
The opposite pairing $\bullet \: (f, f') \mapsto f' f$ is classified
by a coproduct $\psi \: LB \to LB \otimes_L LB$, where $L$ acts through
$\eta_R$ on the left hand tensor factor and through $\eta_L$ on the right
hand tensor factor.  When composed with the obvious isomorphism $LB \otimes_L
LB \cong L \otimes B \otimes B$ it takes the form $id \otimes \psi$,
where $\psi \: B \to B \otimes B$ defines the coproduct in a Hopf algebra
structure on~$B$.  In low degrees,
\begin{align*}
\psi(b_1) &= b_1 \otimes 1 + 1 \otimes b_1 \\
\psi(b_2) &= b_2 \otimes 1 + 2 b_1 \otimes b_1 + 1 \otimes b_2 \\
\psi(b_3) &= b_3 \otimes 1 + (b_1^2 + 2 b_2) \otimes b_1 + 3 b_1 \otimes b_2
        + 1 \otimes b_3 \\
\psi(b_4) &= b_4 \otimes 1 + (2 b_1 b_2 + 2 b_3) \otimes b_1
	+ (3 b_1^2 + 3 b_2) \otimes b_2 + 4 b_1 \otimes b_3 + 1 \otimes b_4 \,,
\end{align*}
see \cite{Ada74}*{p.~91}.
The inverse $f^{-1} \: F' \to F$ of a strict isomorphism $f \: F \to
F'$ is classified by a homomorphism $\chi \: LB \to LB$.  Its restriction along
$\eta_L \: L \to LB$ is $\eta_R$, while its restriction along $\iota \:
B \to LB$ is $\iota \chi$, where $\chi \: B \to B$ is the conjugation
in the Hopf algebra structure on~$B$.
Following \cite{Ada74}*{p.~65}, for $f(x) = x + \sum_{n\ge1} b_n x^{n+1}$
we have
$$
f^{-1}(x) = x + \sum_{n\ge1} \bar b_n x^{n+1}
$$
where $\bar b_n = \chi(b_n)$ is given in low degrees by
\begin{align*}
\bar b_1 &= - b_1 \\
\bar b_2 &= 2 b_1^2 - b_2 \\
\bar b_3 &= - 5 b_1^3 + 5 b_1 b_2 - b_3 \\
\bar b_4 &= 14 b_1^4 - 21 b_1^2 b_2 + 3 b_2^2 + 6 b_1 b_3 - b_4 \,.
\end{align*}

One might now like to say that $L$, $LB$, $\eta_L$, $\eta_R$ and $\psi$
classify the objects, morphisms, sources, targets and composition in
the groupoid $\cG(R)$, but due to the reversal of ordering in the pairing
$\bullet$, this is not quite correct.  Instead, $L$ and $LB$ classify
the objects and morphisms in the opposite groupoid, $\cG^{op}(R)$.
A homomorphism $LB \to R$ corresponds to a diagram $f \: F \to F'$, as
above, which we can view as a morphism in $\cG^{op}(R)$ with source $F'$ and
target $F$.  Then $\eta_L$ and $\eta_R$ classify the target and source,
respectively, and $\psi$ classifies the composition $(f, f') \mapsto f
\bullet f'$ in~$\cG^{op}(R)$, where $f' \: F' \to F''$ is as before.

Alternatively, we can focus on the inverse strict isomorphism
$\phi = f^{-1} \: F' \to F$, in place of $f \: F \to F'$.  
We think of $L$ as classifying formal group laws in the same
way as before, but now we think of $LB$ as classifying diagrams
$$
F \overset{\phi}\longfrom F'
$$
where $F$ and $F'$ are formal group laws and $\phi$ is the strict
isomorphism
$$
\phi(x) = x + \sum_{n\ge1} \bar b_n x^{n+1}
$$
with $\bar b_n \in B$ included by $\iota \: B \to LB$.
The $\bar b_n$ provide a second polynomial basis for $LB$ over
$L$, so that $LB \cong L\bar B = L[\bar b_n \mid n\ge1]$.
Then $L$ and $LB$ corepresent the objects and morphisms in $\cG(R)$,
$\eta_L$ and $\eta_R$ corepresent the target and source of a morphism,
in that order, $\epsilon$ corepresents the identity morphism,
$\psi$ corepresents the composition
pairing
\begin{align*}
\cG(R)(F', F) \times \cG(R)(F'', F') &\longto \cG(R)(F'', F) \\
\circ \: (\phi, \phi') &\longto \phi \phi' \,,
\end{align*}
and $\chi$ corepresents passage to the inverse of a morphism.

Novikov~\cite{Nov67} and Landweber~\cite{Lan67} studied the cohomology
operations in complex cobordism, which are represented by classes in
$MU^*(MU)$.  Turning instead to homology, Adams
\cite{Ada74}*{Lem.~4.5(ii)} showed that
$$
\pi_*(MU \wedge MU) \cong \pi_*(MU) [b_n \mid n\ge1]
$$
for specific classes $b_n \in \pi_*(MU \wedge MU)$, so that Quillen's
isomorphism $L \cong \pi_*(MU)$ extends to an isomorphism $LB \cong
\pi_*(MU \wedge MU)$.  By the results of \cite{Ada74}*{\S11}, the left and
right units $\eta_L, \eta_R \: L \to LB$ correspond to the homomorphisms
induced by the maps $MU \cong MU \wedge S \to MU \wedge MU$ and $MU \cong
S \wedge MU \to MU \wedge MU$, respectively.  Likewise, the augmentation
$\epsilon \: LB \to L$ is induced by the multiplication $MU \wedge
MU \to MU$.  The coproduct $\psi \: LB \to LB \otimes_L LB$
is induced by the map $MU \wedge MU \cong MU \wedge S \wedge MU \to MU
\wedge MU \wedge MU$, via the isomorphism
$$
\pi_*(MU \wedge MU) \otimes_{\pi_*(MU)} \pi_*(MU \wedge MU)
	\overset{\cong}\longto \pi_*(MU \wedge MU \wedge MU) \,.
$$
Finally, the conjugation $\chi \: LB \to LB$ is induced by the twist map
$\tau \: MU \wedge MU \cong MU \wedge MU$.  In all cases the unlabeled
map $S \to MU$ is the unit map in the ring spectrum structure.  Using the
terminology introduced by Haynes Miller \cite{Mil75}, $(L, LB)$ and
$(\pi_*(MU), \pi_*(MU \wedge MU))$ are isomorphic as Hopf algebroids.

\subsection{Moving coordinates}
So far we have classified strict isomorphisms $f(x) = x + \sum_{n\ge1}
b_n x^{n+1}$ or $\phi(x) = x + \sum_{n\ge1} \bar b_n x^{n+1}$ in a
way that is independent of the source and target of $f$ and~$\phi$.
Such ``absolute coordinates'' exist, because the Hopf algebroid $(L,
LB)$ is split.  Following Araki \cite{Ara73}*{Prop.~2.10} and Landweber
\cite{Lan75}, we can instead classify strict isomorphisms in terms of
``moving coordinates''.  This will lead to nicer formulas for the
$\sigma$-operator in $\pi_* THH(MU)$.  The strict isomorphism
$$
F \overset{\phi}\longfrom F'
$$
can be uniquely written as a formal sum
$$
\phi(x) = x +_F {\sum_{n\ge1}}^F c_n x^{n+1} \,,
$$
with respect to the target formal group law, for a sequence of elements
$c_n \in LB$ with $c_n$ in degree~$2n$.  (In spite of the notation,
these are essentially unrelated to the Chern classes.)  In low degrees,
\begin{align*}
c_1 &= \bar b_1 \\
c_2 &= - a_{11} \bar b_1 + \bar b_2 \\
c_3 &= - a_{12} \bar b_1 + a_{11}^2 \bar b_1 - a_{11} \bar b_2 + \bar b_3 \\
c_4 &= (a_{11}^2 - a_{12}) \bar b_1^2 - (a_{11}^3 - 2 a_{11}
a_{12} + a_{13}) \bar b_1 + (a_{11}^2 - a_{11} \bar b_1 - a_{12}) \bar
b_2 - a_{11} \bar b_3 + \bar b_4
\end{align*}
so that
\begin{align*}
c_1 &= -b_1 \\
c_2 &= a_{11} b_1 + (2 b_1^2 - b_2) \\
c_3 &= a_{12} b_1 - a_{11}^2 b_1 + a_{11} (b_2 - 2 b_1^2)
	+ (-5 b_1^3 + 5 b_1 b_2 - b_3) \\
c_4 &= 14 b_1^4 + (a_{11}^2 - a_{12}) b_1^2 - 21 b_1^2 b_2
	+ (a_{11}^2 + a_{11} b_1 - a_{12}) (2 b_1^2 - b_2) \\
	&\qquad
	+ a_{11} (5 b_1^3 - 5 b_1 b_2 + b_3)
	+ (a_{11}^3 - 2 a_{11} a_{12} + a_{13}) b_1
	+ 3 b_2^2 + 6 b_1 b_3 - b_4 \,.
\end{align*}
Hence
\begin{equation} \label{eq:cn-xnbn}
\begin{aligned}
c_1 &= -b_1 \\
c_2 &= x_1 b_1 + (2 b_1^2 - b_2) \\
c_3 &= x_2 b_1 - x_1^2 b_1 + x_1 (b_2 - 2 b_1^2)
	+ (-5 b_1^3 + 5 b_1 b_2 - b_3) \\
c_4 &= 14 b_1^4 + (x_1^2 - x_2) b_1^2 - 21 b_1^2 b_2
	+ (x_1^2 + x_1 b_1 - x_2) (2 b_1^2 - b_2) \\
	&\qquad + x_1 (5 b_1^3 - 5 b_1 b_2 + b_3)
	+ (x_1^3 - 4 x_1 x_2 + 2 x_3) b_1
	+ 3 b_2^2 + 6 b_1 b_3 - b_4 \,.
\end{aligned}
\end{equation}

The moving coordinates $c_n$ form yet another polynomial basis for $LB$
over $L$, so that $LB \cong LC = L[c_n \mid n\ge1]$.  This specifies
an isomorphism of Hopf algebroids $(L, LB) \cong (L, LC)$.  The left
unit $\eta_L \: L \to LC$ is given by the evident inclusion, and the
augmentation $\epsilon \: LC \to L$ sends each $c_n$ to zero, for
$n\ge1$.  The right unit $\eta_R \: L \to LC$ corepresents the source
$F'$ of the strict isomorphism $\phi \: F' \to F$ defined as above.
In the next subsection we shall obtain a useful formula for this right
unit homomorphism.

\subsection{Logarithms}
The additive formal group law $F_a$ is defined by $F_a(x, y) = x + y$.
Working over $L \otimes \bQ \cong \pi_*(MU) \otimes \bQ$ there is a
unique strict isomorphism
$$
F_a \overset{\log}\longfrom F
$$
from the universal formal group law to the additive one, which we can write
as
$$
\log(x) = x + \sum_{n\ge1} m_n x^{n+1}
$$
for unique elements $m_n \in L \otimes \bQ$, with $m_n$ in degree~$2n$.
See \cite{Ada74}*{Cor.~7.15} or \cite{Rav86}*{Thm.~A2.1.6}.
Let
$\exp(x) = x + \sum_{n\ge1} \bar m_n x^{n+1}$
be the inverse strict isomorphism, from $F_a$ to $F$.  Then
$\log F(x, y) = \log(x) + \log(y)$ and
$$
F(x,y) = \exp(\log(x) + \log(y))
$$
over $L \otimes \bQ$.  We can express the $\bar m_n$, the
$a_{ij}$ and the $x_n$ as integer polynomials in the logarithmic
coefficients~$m_n$.  In low degrees,
\begin{align*}
\bar m_1 &= - m_1 \\
\bar m_2 &= 2 m_1^2 - m_2 \\
\bar m_3 &= - 5 m_1^3 + 5 m_1 m_2 - m_3 \\
\bar m_4 &= 14 m_1^4 - 21 m_1^2 m_2 + 3 m_2^2 + 6 m_1 m_3 - m_4
\end{align*}
and
\begin{align*}
a_{11} &= - 2 m_1 \\
a_{12} &= 4 m_1^2 - 3 m_2 \\
a_{13} &= - 8 m_1^3 + 12 m_1 m_2 - 4 m_3 \\
a_{22} &= - 20 m_1^3 + 24 m_1 m_2 - 6 m_3 \\
a_{14} &= 16 m_1^4 - 36 m_1^2 m_2 + 9 m_2^2 + 16 m_1 m_3 - 5 m_4 \\
a_{23} &= 72 m_1^4 - 132 m_1^2 m_2 + 27 m_2^2 + 44 m_1 m_3 - 10 m_4 \,.
\end{align*}
Hence
\begin{equation} \label{eq:xn-mn}
\begin{aligned}
x_1 &= - 2 m_1 \\
x_2 &= 4 m_1^2 - 3 m_2 \\
x_3 &= - 12 m_1^3 + 12 m_1 m_2 - 2 m_3 \\
x_4 &= 16 m_1^4 - 36 m_1^2 m_2 + 9 m_2^2 + 16 m_1 m_3 - 5 m_4 \,.
\end{aligned}
\end{equation}

The resulting homomorphism $L \to \bZ[m_n \mid n\ge1]$
then induces an isomorphism $L \otimes \bQ \cong \bQ[m_n \mid n\ge1]$, so
after rationalization the classes $m_n$ serve as another set of polynomial
generators for $L \cong \bZ[x_n \mid n\ge1]$.  The rational classes $m_n$
are canonically defined, as opposed to the integral classes $x_n$.

The right unit $\eta_R \: L \to LB$ can be calculated using
the identity
\begin{equation} \label{eq:etaRmn-in-LB}
\sum_{n\ge0} \eta_R(m_n)
= \sum_{i\ge0} m_i \Bigl( \sum_{j\ge0} \bar b_j \Bigr)^{i+1}
\end{equation}
in $LB \otimes \bQ$, where $m_0 = 1$, $\bar b_0 = 1$ and $\eta_R(m_n)$
is equal to the degree~$2n$ part of either side of the formula.  See
\cite{Ada74}*{Prop.~9.4} or \cite{Rav86}*{Thm.~A2.1.16}.
Working instead with moving coordinates we obtain the following
formula, which does not seem to appear in the standard references.

\begin{proposition} \label{prop:etaR-on-mn}
The right unit $\eta_R \: L \to LC$ is determined by the formula
$$
\eta_R(m_n) = \sum_{(i+1)(j+1) = n+1} m_i c_j^{i+1}
$$
in $LC \otimes \bQ$, where $m_0 = 1$ and $c_0 = 1$.
The sum runs over the indices $i,j\ge0$ with $(i+1)(j+1) = n+1$.
\end{proposition}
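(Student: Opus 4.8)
The plan is to recognize $\eta_R(m_n)$ as the coefficients of the logarithm of the source formal group law $F'$, and then to evaluate that logarithm by precomposing the logarithm of $F$ with $\phi$. Since the rational classes $m_n \in L \otimes \bQ$ are defined as the coefficients of the unique strict isomorphism $\log \: F \to F_a$, and the formation of logarithmic coefficients is natural in the classified formal group law, applying $\eta_R$ produces the logarithmic coefficients of the formal group law $(\eta_R)_* F = F'$. Thus $\eta_R(m_n)$ is the coefficient of $x^{n+1}$ in the logarithm $\log' \: F' \to F_a$ over $LC \otimes \bQ$, normalized by $\eta_R(m_0) = 1$.

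First I would use uniqueness of logarithms to identify $\log'$. The composite $\log \circ \phi \: F' \to F_a$ is a strict isomorphism from $F'$ to the additive formal group law, since $\log(\phi(F'(x,y))) = \log(F(\phi(x), \phi(y))) = \log(\phi(x)) + \log(\phi(y))$; over the torsion-free ring $LC \otimes \bQ$ such a strict isomorphism is unique, so $\log' = \log \circ \phi$. The key computation is therefore to expand $\log(\phi(x))$ in moving coordinates. Writing $\phi(x) = {\sum_{j\ge0}}^F c_j x^{j+1}$ with $c_0 = 1$, and using that $\log$ is a homomorphism $(F, +_F) \to (F_a, +)$ --- so that $\log(a +_F b) = \log(a) + \log(b)$, extended to the convergent $F$-sum defining $\phi$ --- I obtain
$$
\log(\phi(x)) = \sum_{j\ge0} \log(c_j x^{j+1}) = \sum_{j\ge0} \sum_{i\ge0} m_i c_j^{i+1} x^{(i+1)(j+1)} \,,
$$
where the inner equality inserts $\log(t) = \sum_{i\ge0} m_i t^{i+1}$ with $m_0 = 1$ and substitutes $t = c_j x^{j+1}$.

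Finally I would compare coefficients of $x^{n+1}$ in the identity $\log'(x) = \log(\phi(x))$. The monomial $m_i c_j^{i+1} x^{(i+1)(j+1)}$ contributes to the coefficient of $x^{n+1}$ exactly when $(i+1)(j+1) = n+1$, so collecting these terms gives $\eta_R(m_n) = \sum_{(i+1)(j+1) = n+1} m_i c_j^{i+1}$, which is the asserted formula. The step demanding the most care is the interchange converting the $F$-indexed formal sum into an ordinary sum under $\log$: one must verify that the additivity $\log(a +_F b) = \log(a) + \log(b)$ passes to the infinite sum ${\sum_{j\ge0}}^F c_j x^{j+1}$. This is legitimate because the terms $c_j x^{j+1}$ have strictly increasing order in the $(x)$-adic filtration and $\log$ is continuous for that filtration, so all rearrangements take place within finite truncations. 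The remaining bookkeeping of exponents is routine.
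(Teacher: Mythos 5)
Your proposal is correct and follows essentially the same route as the paper's proof: both identify $\eta_R(m_n)$ with the logarithmic coefficients of the source formal group law $(\eta_R)_*(F) = F'$, use uniqueness of strict isomorphisms to the additive formal group law over a torsion-free ring to conclude $(\eta_R)_*(\log) = \log \circ \phi$, and then expand $\log\bigl({\sum_j}^F c_j x^{j+1}\bigr) = \sum_{i,j} m_i c_j^{i+1} x^{(i+1)(j+1)}$ and compare coefficients of $x^{n+1}$. Your added remark justifying the rearrangement of the formal $F$-sum via the $(x)$-adic filtration is a point the paper passes over silently, but it does not change the argument.
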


\begin{proof}
The proofs of \cite{Ada74}*{Thm.~16.1(i)} and \cite{Lan75}*{Thm.~3(i)}
readily carry over from the $p$-typical situation to the general one.
The formal sum
$$
\phi(x) = {\sum_{n\ge0}}^F c_n x^{n+1}
$$
defines a strict isomorphism
$$
(\eta_L)_*(F) \overset{\phi}\longfrom (\eta_R)_*(F)
$$
of formal group laws over $LC$.
The strict isomorphism $\log \: F \to F_a$ over $L \otimes \bQ$
induces strict isomorphisms
\begin{align*}
(\eta_L)_*(\log) \: (\eta_L)_*(F) &\longto (\eta_L)_*(F_a) = F_a \\
(\eta_R)_*(\log) \: (\eta_R)_*(F) &\longto (\eta_R)_*(F_a) = F_a
\end{align*}
over $LC \otimes \bQ$. By their uniqueness we must have
$$
(\eta_R)_*(\log) = (\eta_L)_*(\log) \circ \phi \,.
$$
Hence
$$
\sum_{n\ge0} \eta_R(m_n) x^{n+1}
= \log\Bigl({\sum_{j\ge0}}^F c_j x^{j+1}\Bigr)
= \sum_{j\ge0} \log(c_j x^{j+1})
= \sum_{i,j\ge0} m_i (c_j x^{j+1})^{i+1} \,.
$$
Concentrating on the coefficients of $x^{n+1}$ yields the formula.
\end{proof}

\section{The $p$-typical case}
\label{sec:typ}

Let $p$ be any prime.  A ($1$-dimensional, commutative) formal group
law $F$ over a torsion-free $\bZ_{(p)}$-algebra $R$ is $p$-typical
\cite{Car67a}, \cite{Car67b}, \cite{Rav86}*{Def.~A2.1.17} if its
logarithmic coefficients satisfy $m_n = 0$ unless $n+1$ is a power of~$p$.
In other words,
$$
\log(x) = x + \sum_{n\ge1} \ell_n x^{p^n}
$$
for a sequence of coefficients $\ell_n \in R \otimes \bQ$, with $\ell_n$
in degree~$2(p^n-1)$.  There is a universal $p$-typical formal group
law~$F$, defined over the $\bZ_{(p)}$-subalgebra $V \subset
\bQ[\ell_n \mid n\ge1]$ generated by the coefficients of
the formal power series
$$
F(x,y) = \log^{-1}(\log(x) + \log(y)) \,.
$$
By the universal property of the Lazard ring, there is a ring
homomorphism $\alpha \: L \otimes \bZ_{(p)} \to V$ classifying the
underlying formal group law of~$F$.  Conversely, each formal group
law over a $\bZ_{(p)}$-algebra is strictly isomorphic to a unique
$p$-typical one.  Hence there is a ring homomorphism $\beta \: V \to
L \otimes \bZ_{(p)}$ classifying the $p$-typification of the universal
formal group law.  The composite $\alpha\beta \: V \to L \otimes \bZ_{(p)}
\to V$ is the identity, and the composite $e = \beta\alpha \: L \otimes
\bZ_{(p)} \to V \to L \otimes \bZ_{(p)}$ is (the Quillen) idempotent.
After rationalization, $e(m_n) = m_n$ if $n+1$ is a power of $p$, and
$e(m_n) = 0$ otherwise.  It follows that
$$
V = \bZ_{(p)}[v_n \mid n\ge1] \,,
$$
with $v_n$ in degree~$2(p^n-1)$, and $V \otimes \bQ \to \bQ[\ell_n \mid
n\ge1]$ is an isomorphism.
One choice of generators $v_n$, due to Hazewinkel \cite{Haz76}*{(4.3.1)},
is recursively defined by
\begin{equation} \label{eq:Hazewinkel}
p \ell_n = \sum_{i=0}^{n-1} \ell_i v_{n-i}^{p^i}
= v_n + \ell_1 v_{n-1}^p + \dots + \ell_{n-1} v_1^{p^{n-1}} \,.
\end{equation}
Here $\ell_0 = 1$.  In low degrees,
\begin{equation} \label{eq:pnelln}
\begin{aligned}
p \ell_1 &= v_1 \\
p^2 \ell_2 &= p v_2 + v_1^{p+1} \\
p^3 \ell_3 &= p^2 v_3 + p (v_1 v_2^p + v_1^{p^2} v_2) + v_1^{p^2+p+1} \,.
\end{aligned}
\end{equation}
These formulas exhibit one advantage of the $p$-typical context over
the general one.  We do not have similar formulas characterizing
polynomial generators $x_n \in L$ in terms of the logarithmic
coefficients $m_n \in L \otimes \bQ$.

\begin{lemma} \label{lem:pnellninV}
$p^n \ell_n \in V$ for each $n\ge1$.
\end{lemma}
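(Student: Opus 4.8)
The plan is to induct on $n$, using the Hazewinkel recursion~\eqref{eq:Hazewinkel} to control the $p$-adic denominator of $\ell_n$. The content of the statement is that although $\ell_n \in V \otimes \bQ$ a priori carries an unbounded denominator, the recursion forces that denominator to divide $p^n$.

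First I would dispose of the base case $n=1$: equation~\eqref{eq:Hazewinkel} (equivalently the first line of~\eqref{eq:pnelln}) reads $p \ell_1 = v_1$, so $p^1 \ell_1 = v_1 \in V$ outright. For the inductive step I would assume $p^i \ell_i \in V$ for all $1 \le i \le n-1$, noting that $\ell_0 = 1 \in V$ disposes of the index $i = 0$ as well. Writing~\eqref{eq:Hazewinkel} as
$$
p \ell_n = v_n + \sum_{i=1}^{n-1} \ell_i v_{n-i}^{p^i}
$$
and multiplying through by $p^{n-1}$ yields
$$
p^n \ell_n = p^{n-1} v_n + \sum_{i=1}^{n-1} (p^{n-1} \ell_i)\, v_{n-i}^{p^i} \,.
$$
Here $p^{n-1} v_n \in V$ since $v_n \in V$, and for each $1 \le i \le n-1$ I would factor $p^{n-1} \ell_i = p^{\,n-1-i}(p^i \ell_i)$; since $n-1-i \ge 0$ and $p^i \ell_i \in V$ by the inductive hypothesis, each summand $(p^{n-1}\ell_i)\, v_{n-i}^{p^i}$ lies in $V$. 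Summing, the right-hand side lies in $V$, so $p^n \ell_n \in V$, completing the induction.

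The step I expect to require the most care is simply the denominator bookkeeping, namely keeping track of which power of $p$ to introduce so that the inductive hypothesis applies to every term. The crucial inequality is $i \le n-1$, which guarantees $n-1-i \ge 0$ and hence that $p^{n-1}\ell_i = p^{\,n-1-i}(p^i \ell_i)$ really does lie in $V$ rather than only in $V \otimes \bQ$. Beyond this there is no genuine obstacle, since no combinatorics of the generators $v_n$ is needed: only their membership in $V$ and the shape of the recursion are used.
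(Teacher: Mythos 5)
Your proof is correct and is exactly the argument the paper has in mind: its proof of Lemma~\ref{lem:pnellninV} consists of the single sentence that the claim follows from~\eqref{eq:Hazewinkel} by induction on~$n$, and your multiplication by $p^{n-1}$ together with the factorization $p^{n-1}\ell_i = p^{\,n-1-i}(p^i\ell_i)$ is the standard way to fill in that induction.
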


\begin{proof}
This follows from \eqref{eq:Hazewinkel} by induction on~$n$.
\end{proof}

Quillen \cite{Qui69}*{Thm.~4} constructed the Brown--Peterson spectrum $BP$
with maps $\alpha \: MU_{(p)} \to BP$ and $\beta \: BP \to MU_{(p)}$,
such that $\alpha \beta$ is homotopic to the identity and $\beta \alpha$
induces the idempotent $e$ on homotopy.  The resulting formal group
law over $\pi_*(BP)$ is then $p$-typical, and the homomorphism $V
\to \pi_*(BP)$ that classifies this $p$-typical formal group law is
an isomorphism.

Consider a strict isomorphism
$$
F \overset{\phi}\longfrom F'
$$
given in moving coordinates by
$$
\phi(x) = {\sum_{n\ge0}}^F c_n x^{n+1} \,.
$$
By Araki \cite{Ara73}*{Thm.~3.6} and Landweber \cite{Lan75}*{Lem.~1}, $F'$
is $p$-typical if and only if $c_n = 0$ unless $n+1$ is a power of~$p$.
In other words,
$$
\phi(x) = {\sum_{n\ge0}}^F t_n x^{p^n}
$$
for a sequence of coefficients $t_n$, with $t_0 = 1$.  Let
$$
T = \bZ_{(p)}[t_n \mid n\ge1]
$$
and $VT = V \otimes T$,
with $t_n$ in degree~$2(p^n-1)$.  Then $V$ and $VT$ corepresent the
objects and morphism in the full subgroupoid $\cT(R) \subset \cG(R)$
of $p$-typical formal group laws and their strict isomorphisms.
The inclusion $\eta_L \: V \to VT$ and a right unit homomorphism
$\eta_R \: V \to VT$ classify the target~$F$ and the
source~$F'$.  The augmentation $\epsilon \: VT \to V$ mapping each
$t_n$ to zero classifies the identity morphism.  A coproduct
$\psi \: VT \to VT \otimes_V VT$ classifies the composition
$$
\circ \: \cT(R)(F', F) \times \cT(R)(F'', F') \longto
\cT(R)(F'', F) \,,
$$
and a conjugation $\chi \: VT \to VT$ classifies the function sending
$\phi$ to $\phi^{-1} \: F \to F'$.  The pair $(V, VT)$, equipped with
these structure maps, is then a Hopf algebroid, corepresenting $\cT(R)$
as a functor of commutative $\bZ_{(p)}$-algebras.  The full inclusion
$\cT(R) \subset \cG(R)$ and the $p$-typification functor $\cG(R) \to
\cT(R)$ are classified by morphisms $\alpha \: (L, LC) \otimes \bZ_{(p)}
\to (V, VT)$ and $\beta \: (V, VT) \to (L, LC) \otimes \bZ_{(p)}$
of Hopf algebroids.  Here $\alpha \: L \otimes \bZ_{(p)} \to V$ is
given rationally by $\alpha(m_n) = 0$ if $n+1$ is not a power of~$p$
and $\alpha(m_n) = \ell_i$ if $n+1 = p^i$.  Similarly, $\alpha(c_n) = 0$
if $n+1$ is not a power of~$p$ and $\alpha(c_n) = t_i$ if $n+1 = p^i$.
Conversely, $\beta(\ell_i) = m_{p^i-1}$ and $\beta(t_i) = c_{p^i-1}$.

Adams \cite{Ada74}*{Thm.~16.1(ii)} showed that
$$
\pi_*(BP \wedge BP) \cong \pi_*(BP) [t_n \mid n\ge1]
$$
for specific classes $t_n \in \pi_*(BP \wedge BP)$, so that Quillen's
isomorphism $V \cong \pi_*(BP)$ extends to an isomorphism $VT \cong
\pi_*(BP \wedge BP)$.  Adams showed that the formulas for the flat
Hopf algebroid $(\pi_*(BP), \pi_*(BP \wedge BP))$, associated to the
(homotopy commutative or better) ring spectrum $BP$, agree with those
specified by Quillen.  Landweber \cite{Lan75}*{\S3} then verified that
these agree with the structure maps of $(V, VT)$, corepresenting $\cT(R)$.

In particular, \cite{Qui69}*{Thm.~5(iii)} and \cite{Ada74}*{Thm.~16.1(i)}
gave a formula for the right unit~$\eta_R \: V \to VT$ after
rationalization, which in our notation reads
\begin{equation} \label{eq:etaRelln}
\eta_R(\ell_n) = \sum_{i+j=n} \ell_i t_j^{p^i}
	= \ell_n + \ell_{n-1} t_1^{p^{n-1}}
	+ \dots + \ell_1 t_{n-1}^p + t_n
\end{equation}
for $n\ge1$.
When combined with~\eqref{eq:Hazewinkel}, this will give us
good formulas for the $\sigma$-operator in $\pi_* THH(BP)$.

\section{Topological Hochschild homology and the bar construction}
\label{sec:thhatbc}

\subsection{Chains of composable strict isomorphisms}
Recall that $THH(MU)_\bullet$ is a simplicial ($E_\infty$ ring)
spectrum $[q] \mapsto THH(MU)_q$.  We shall analyze the simplicial
graded commutative ring $[q] \mapsto \pi_* THH(MU)_q$ in terms of the
Hopf algebroids $(L, LB)$ and $(L, LC)$.  We emphasize the latter case,
since it is closer to the Hopf algebroid $(V, VT)$ that we need to
consider in the $p$-typical case.  However, we shall also state some of
the results for $(L, LB)$, in part to illustrate the advantage of using
moving coordinates for these calculations.

The product map
$$
L \otimes L^{\otimes q} = \pi_*(MU) \otimes \pi_*(MU)^{\otimes q}
	\longto \pi_*(MU \wedge MU^{\wedge q}) = \pi_* THH(MU)_q
$$
is not an isomorphism (for $q\ge1$), but becomes one after
rationalization.  Since $LB \cong LC$ is flat over $L$, we can rewrite its
target as
$$
\pi_*((MU \wedge MU) \wedge_{MU} \dots \wedge_{MU} (MU \wedge MU))
\cong LC \otimes_L \dots \otimes_L LC \,,
$$
with $q$ copies of $MU \wedge MU$ and $LC$.
Here
$LC \otimes_L \dots \otimes_L LC \cong L \otimes C^{\otimes q}$
by iterated use of standard isomorphisms of the form $X \otimes_L L
\otimes Y \cong X \otimes Y$, for suitable~$X$ and~$Y$.  The tensor
product $\pi_* THH(MU)_q \cong L \otimes C^{\otimes q}$ classifies chains
$$
(F_0; \phi_1, \dots, \phi_q) \quad = \quad
(F_0 \overset{\phi_1}\longfrom F_1 \longfrom \dots \overset{\phi_q}\longfrom F_q)
$$
of $q$ composable strict isomorphisms between formal group laws,
with $L$ classifying the formal group law $F_0$ and the $i$-th copy
of $C$ classifying the strict isomorphism $\phi_i \: F_i \to F_{i-1}$,
presented in moving coordinates with respect to its target.  The composite
homomorphism $L \otimes L^{\otimes q} \to L \otimes C^{\otimes q}$
classifies the function taking $(F_0; \phi_1, \dots, \phi_q)$ to the
$(1+q)$-tuple of formal group laws $(F_0, F_1, \dots, F_q)$.

The $i$-th face map $d_i \: \pi_* THH(MU)_{q+1} \to THH(MU)_q$ is a
homomorphism $L \otimes C^{\otimes q+1} \to L \otimes C^{\otimes q}$.
When $0 \le i \le q$ it is induced by the multiplication $MU \wedge
MU \to MU$ of the $i$-th and $(i+1)$-th copies of $MU$ (counting from
zero), and is compatible with the multiplication $L \otimes L \to L$
of the $i$-th and $(i+1)$-th copies of $L$ in $L \otimes L^{\otimes q+1}$.
Hence it corepresents the function taking $(F_0; \phi_1, \dots, \phi_q)$
to a chain where $F_i$ has been duplicated in the $i$-th and $(i+1)$-th
positions.  Since the only strict automorphism of a formal group law
over a torsion-free ring is the identity, the chain of composable strict
isomorphisms must be of the form
$$
(F_0; \phi_1, \dots, \phi_i, id, \phi_{i+1}, \dots, \phi_q)
$$
where $id \: F_i \to F_i$ denotes the identity isomorphism of the
repeated formal group law.
The last face map, with $i = q+1$, is induced by multiplying the
final and initial copies of $MU$.  It is compatible with the
multiplication of the final and initial copies of $L$, and therefore
corepresents the function taking $(F_0; \phi_1, \dots, \phi_q)$ to a
chain where a copy of $F_0$ has been added at the end.  This chain must
have the form
$$
(F_0; \phi_1, \dots, \phi_q, (\phi_1 \cdots \phi_q)^{-1})
$$
where $(\phi_1 \cdots \phi_q)^{-1} \: F_0 \to F_q$.

The $j$-th degeneracy map $s_j \: \pi_* THH(MU)_{q-1} \to THH(MU)_q$ is
a homomorphism $L \otimes C^{\otimes q-1} \to L \otimes C^{\otimes q}$.
It is induced by inserting the unit map $S \to MU$ between the $j$-th and
$(j+1)$-th copies of $MU$ (counting from zero), and is compatible with
the unit map $\bZ \to L$ to the $(j+1)$-th copy of~$L$ in $L \otimes
L^{\otimes q}$.  It therefore corepresents the function taking $(F_0;
\phi_1, \dots, \phi_q)$ to a chain where $F_{j+1}$ has been omitted.
For $0 \le j \le q-2$ this is the chain $(F_0; \phi_1, \dots, \phi_{j+1}
\phi_{j+2}, \dots, \phi_q)$, while for $j=q-1$ we obtain $(F_0; \phi_1,
\dots, \phi_{q-1})$.

\subsection{Strict isomorphisms with common target}
Let $\beta(C)_\bullet$ be the simplicial bar construction
$$
[q] \mapsto \beta(C)_q = C^{\otimes q}
$$
on the augmented algebra~$C$.  The homology of its normalized chain
complex $N\beta(C)_*$ calculates
$$
\Tor^C_*(\bZ, \bZ) = E([c_n] \mid n\ge1) \,,
$$
since $C$ is flat over~$\bZ$.  Here $[c_n]$ is the class in $\Tor^C_1(\bZ,
\bZ) = I(C)/ I(C)^2$ of the bar $1$-cycle $c_n \in I(C)$,
where $I(C) = \ker(C \to \bZ) \cong \cok(\bZ \to C)$ is the
positive-degree part of $C$, and $E(-)$ denotes the exterior algebra on
the indicated generators.

Then $L \otimes \beta(C)_\bullet$
is a simplicial graded commutative ring with
$$
[q] \mapsto L \otimes \beta(C)_q = L \otimes C^{\otimes q} \,,
$$
and the homology of $L \otimes N\beta(C)_*$ calculates $L \otimes
\Tor^C_*(\bZ, \bZ)$.  We think of $L \otimes \beta(C)_q = L \otimes
C^{\otimes q}$ as classifying a $q$-tuple
$$
(F_0; \gamma_1, \dots, \gamma_q) \quad = \quad
(F_0 \overset{\gamma_i}\longfrom F_i)_{i=1}^q
$$
of strict isomorphisms $\gamma_i \: F_i \to F_0$ in moving coordinates,
all with the same target.

The $i$-th face map $d_i \: L \otimes C^{\otimes q+1} \to L \otimes
C^{\otimes q}$ is given for $i=0$ by the augmentation $C \to \bZ$ of the
first copy of $C$, for $1 \le i \le q$ by the multiplication $C \otimes
C \to C$ of the $i$-th and $(i+1)$-th copy of $C$ (counting from one),
and for $i = q+1$ by the augmentation of the last copy of~$C$.  This
corepresents the function that takes $(F_0; \gamma_1, \dots, \gamma_q)$
to $(F_0; id, \gamma_1, \dots, \gamma_q)$ for $i=0$, to $(F_0; \gamma_1,
\dots, \gamma_i, \gamma_i, \dots, \gamma_q)$ for $1 \le i \le q$, and to
$(F_0; \gamma_1, \dots, \gamma_q, id)$ for $i=q+1$.  In the first and last
cases, $id \: F_0 \to F_0$ refers to the identity isomorphism for $F_0$.

The $j$-th degeneracy map $s_j \: L \otimes C^{\otimes q-1} \to L \otimes
C^{\otimes q}$, for $0 \le j \le q-1$, is induced by the unit $\bZ \to C$
to the $(j+1)$-th copy of $C$ (counting from one).  It corepresents the
function that omits $\gamma_{j+1} \: F_{j+1} \to F_0$ from the $q$-tuple
of strict isomorphisms with target $F_0$, leaving $(F_0; \gamma_1, \dots,
\gamma_j, \gamma_{j+2}, \dots, \gamma_q)$.

\subsection{A shearing isomorphism}
To each chain of $q$ composable strict isomorphisms
$$
F_0 \overset{\phi_1}\longfrom F_1 \overset{\phi_2}\longfrom \dots
	\overset{\phi_q}\longfrom F_q
$$
we can associate a $q$-tuple of strict isomorphisms
$$
(F_0 \overset{\gamma_i}\longfrom F_i)_{i=1}^q \,,
$$
having the same underlying sequence $F_0, F_1, \dots, F_q$ of formal
group laws.  This one-to-one correspondence is classified by the following
isomorphism.

\begin{proposition}
There is an isomorphism of simplicial graded commutative rings
$$
L \otimes \beta(C)_\bullet
	\overset{\cong}\longto \pi_* THH(MU)_\bullet
$$
that, in degree~$q$, classifies the bijection
$$
(F_0; \phi_1, \dots, \phi_q) \overset{\cong}\longmapsto (F_0; \gamma_1, \dots, \gamma_q)
$$
where $\gamma_i = \phi_1 \cdots \phi_i$ for $1 \le i \le q$.  Its inverse is given
by $\phi_1 = \gamma_1$ and $\phi_i = \gamma_{i-1}^{-1} \gamma_i$ for $2
\le i \le q$.
\end{proposition}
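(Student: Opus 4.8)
The plan is to prove this by Yoneda's lemma, exploiting that both simplicial rings are corepresenting objects for simplicial set-valued functors, and that the shearing assignment identifies those functors. First I would record the corepresentability. Since $C = \bZ[c_n \mid n\ge1]$ is free, hence flat, over $\bZ$, each ring $L \otimes \beta(C)_q = L \otimes C^{\otimes q}$ and each ring $\pi_* THH(MU)_q \cong L \otimes C^{\otimes q}$ is flat over $L$ and, as a functor of a commutative ring $R$, corepresents respectively the set of $q$-tuples $(F_0; \gamma_1, \dots, \gamma_q)$ of strict isomorphisms $\gamma_i \: F_i \to F_0$ with common target and the set of chains $(F_0; \phi_1, \dots, \phi_q)$ of $q$ composable strict isomorphisms $\phi_i \: F_i \to F_{i-1}$, both in moving coordinates, as established in the two preceding subsections. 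By Yoneda, producing a morphism of simplicial graded rings $L \otimes \beta(C)_\bullet \longto \pi_* THH(MU)_\bullet$ is then the same as producing a natural transformation of the corepresented simplicial functors in the opposite direction, from chains to stars.

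Next I would exhibit the shearing as such a natural transformation. Over any $R$ the composition and inversion of strict isomorphisms are defined, so $(F_0; \phi_1, \dots, \phi_q) \mapsto (F_0; \gamma_1, \dots, \gamma_q)$ with $\gamma_i = \phi_1 \cdots \phi_i$ is a well-defined natural assignment admitting the stated inverse $\phi_1 = \gamma_1$, $\phi_i = \gamma_{i-1}^{-1} \gamma_i$; hence it is a natural bijection. The formulas expressing the moving coordinates of $\gamma_i$ through those of $\phi_1, \dots, \phi_i$ (and the generators of $L$) are homogeneous of the same internal degree, so the corresponding homomorphism is graded. By Yoneda this yields, in each simplicial degree $q$, a graded ring isomorphism $\Phi_q \: L \otimes \beta(C)_q \overset{\cong}\longto \pi_* THH(MU)_q$ realizing the stated bijection.

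It then remains to check that the $\Phi_q$ commute with the face and degeneracy operators, that is, that the shearing is a map of simplicial functors; by Yoneda it suffices to verify that it intertwines the contravariant face and degeneracy operations on the corepresented functors, which are given explicitly in the two preceding subsections. For $d_0$ (inserting $id$ at the front of a chain, resp.\ prepending $id$ to a star) and for the inner faces $d_i$ with $1 \le i \le q$ (inserting $id$ in position $i$ of a chain, resp.\ repeating $\gamma_i$ in a star) this is a direct computation: inserting $id$ between $\phi_i$ and $\phi_{i+1}$ leaves all partial products $\phi_1 \cdots \phi_k$ with $k \le i$ unchanged and merely repeats the value $\gamma_i$ once before resuming. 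The degeneracies match in the same manner: deleting $\gamma_{j+1}$ from a star corresponds to composing $\phi_{j+1}\phi_{j+2}$ (for $0 \le j \le q-2$), resp.\ dropping $\phi_q$ (for $j = q-1$), in the chain.

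I expect the one genuinely non-formal case, and the main obstacle, to be the last face $d_{q+1}$. On the chain side it appends $(\phi_1 \cdots \phi_q)^{-1}$, reflecting the cyclic structure, whereas on the star side it appends the identity. Shearing reconciles the two precisely because the new final coordinate becomes $\gamma_{q+1} = (\phi_1 \cdots \phi_q)(\phi_1 \cdots \phi_q)^{-1} = id$, while the earlier coordinates are unchanged; this is the single place where inverses of strict isomorphisms genuinely enter, so the verification is more than a reindexing. With every face and degeneracy checked, the $\Phi_q$ assemble into the asserted isomorphism of simplicial graded commutative rings, and the inverse is given on points by $\phi_1 = \gamma_1$, $\phi_i = \gamma_{i-1}^{-1}\gamma_i$ as stated.
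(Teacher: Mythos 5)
Your proposal is correct and follows essentially the same route as the paper: corepresentability/Yoneda turns the natural shearing bijection into degreewise graded ring isomorphisms, and the substance lies in checking compatibility with the simplicial operators. The only difference is in that check: you verify each face and degeneracy by computing partial products (singling out the last face $d_{q+1}$, where $\gamma_{q+1} = (\phi_1 \cdots \phi_q)(\phi_1 \cdots \phi_q)^{-1} = id$, as the non-formal case), whereas the paper observes that on both sides every operator classifies the same operation on the underlying sequence of formal group laws --- repeat $F_i$, append $F_0$, omit $F_{j+1}$ --- which the shearing preserves, so that the last face is treated uniformly with all the others.
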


\begin{proof}
The stated bijection is natural (in the ring over which the formal group
laws and the strict isomorphisms are defined), hence is corepresented
by an isomorphism
$$
L \otimes \beta(C)_q = L \otimes C^{\otimes q}
\overset{\cong}\longto
L \otimes C^{\otimes q} \cong \pi_* THH(MU)_q \,.
$$
This is the identity for $q=0$ and $q=1$, but has a more
complex expression for $q\ge2$, which we do not need to make explicit.

It remains to verify that these isomorphisms $L \otimes \beta(C)_q \cong
\pi_* THH(MU)_q$ are compatible with the simplicial structure maps.
This follows from the explicit descriptions given in the previous two
subsections: On both sides of the isomorphism the $i$-th face map,
except for the last one, classifies the function that repeats $F_i$
in the underlying sequence of formal group laws.  Similarly, on both
sides the last face map classifies the function that appends $F_0$ to
the underlying sequence.  This ensures that all face maps are compatible
under these isomorphisms.  Finally, on both sides the $j$-th degeneracy
map classifies the function that omits $F_{j+1}$ from the underlying
sequence.  This ensures that all degeneracy maps are compatible.
\end{proof}

Turning to the split case, we have an isomorphism
$$
\pi_* THH(MU)_q \cong LB \otimes_L \dots \otimes_L LB
	\cong L \otimes B^{\otimes q}
$$
and $L \otimes B^{\otimes q}$ classifies chains
$$
(F_0; f_1, \dots, f_q) \quad = \quad
(F_0 \overset{f_1}\longto F_1 \longto \dots
	\overset{f_q}\longto F_q)
$$
of $q$ composable strict isomorphisms.  On the other hand,
$L \otimes \beta(B)_q = L \otimes B^{\otimes q}$
classifies $q$-tuples
$$
(F_0, g_1, \dots, g_q) \quad = \quad
(F_0 \overset{g_i}\longto F_i)_{i=1}^q
$$
of strict isomorphisms, all with the same source.

\begin{proposition}
There is an isomorphism of simplicial graded commutative rings
$$
L \otimes \beta(B)_\bullet
	\overset{\cong}\longto \pi_* THH(MU)_\bullet
$$
that, in degree~$q$, classifies the bijection
$$
(F_0; f_1, \dots, f_q) \overset{\cong}\longmapsto (F_0; g_1, \dots, g_q)
$$
where $g_i = f_i \cdots f_1$ for $1 \le i \le q$.  Its inverse is given
by $f_1 = g_1$ and $f_i = g_i g_{i-1}^{-1}$ for $2 \le i \le q$.
\end{proposition}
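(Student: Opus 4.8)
The plan is to follow the proof of the preceding proposition almost verbatim, since the split presentation is the mirror image of the moving-coordinate one. First I would note that the assignment $(F_0; f_1, \dots, f_q) \mapsto (F_0; g_1, \dots, g_q)$ with $g_i = f_i \cdots f_1$ is a bijection: the displayed inverse $f_1 = g_1$ and $f_i = g_i g_{i-1}^{-1}$ reconstructs the composable chain $F_0 \overset{f_1}\longto F_1 \longto \dots \overset{f_q}\longto F_q$ from the tuple $(F_0 \overset{g_i}\longto F_i)_{i=1}^q$ of strict isomorphisms sharing the source $F_0$. This bijection is natural in the ring $R$ over which the formal group laws are defined, so it is corepresented by an isomorphism of graded commutative rings $L \otimes \beta(B)_q = L \otimes B^{\otimes q} \overset{\cong}\longto L \otimes B^{\otimes q} \cong \pi_* THH(MU)_q$ in each degree~$q$. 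As in the earlier case this map is the identity for $q=0$ and $q=1$ and has a more complicated expression for $q \ge 2$, which we need not make explicit.

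It then remains to check that these degreewise isomorphisms commute with the face and degeneracy maps, and here I would argue through the underlying sequence $F_0, F_1, \dots, F_q$ of formal group laws, exactly as in the proof for $(L, LC)$. The correspondence $g_i = f_i \cdots f_1$ fixes this sequence, because $g_i \colon F_0 \to F_i$ and $f_i \colon F_{i-1} \to F_i$ have the same target $F_i$. I would first record, in the absolute $b_n$-coordinates, how the structure maps of $\pi_* THH(MU)_\bullet$ act on a composable chain: the face maps $d_i$ with $0 \le i \le q$ duplicate $F_i$ by inserting the identity $id_{F_i} \colon F_i \to F_i$; the last, cyclic, face map $d_{q+1}$ appends $F_0$ by means of $(f_q \cdots f_1)^{-1} \colon F_q \to F_0$; and the degeneracy $s_j$ omits $F_{j+1}$ by composing the two adjacent isomorphisms. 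These are derived by the same multiplication-of-adjacent-factors and unit-insertion analysis already used for the moving coordinates. I would then match them against the structure maps of the bar construction $\beta(B)_\bullet$ on the common-source tuples, whose faces are the augmentations of the two outer copies of $B$ and the multiplications of adjacent copies, and whose degeneracies insert the unit of $B$; these perform precisely the operations of duplicating some $F_i$, appending $F_0$, and omitting $F_{j+1}$ on the underlying sequence. Since the two families induce the same operations, the degreewise isomorphisms assemble into an isomorphism of simplicial graded commutative rings.

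The point requiring genuine care is the bookkeeping of source against target. In contrast to the moving-coordinate case the split chains run forward and the tuples $(F_0; g_1, \dots, g_q)$ share a common \emph{source} rather than a common target, so I would expect the only non-formal verification to be the last face map, where the full composite $f_q \cdots f_1$ and its inverse enter and where the cyclic structure of $THH$ meets the orientation of the bar construction. A quick check confirms its consistency---appending $f_{q+1} = (f_q \cdots f_1)^{-1}$ makes the new $g_{q+1} = f_{q+1} f_q \cdots f_1 = id_{F_0}$, matching the augmentation of the outer copy of $B$---after which all remaining faces and degeneracies agree with the $(L, LC)$ computation line for line.
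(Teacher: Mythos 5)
Your proposal is correct and follows essentially the same route as the paper: the paper's proof simply says the argument runs along the same lines as in the moving-coordinates case, which is exactly what you carry out, including the identification of the face and degeneracy maps through the underlying sequence $F_0, F_1, \dots, F_q$ and the consistency check $g_{q+1} = f_{q+1} f_q \cdots f_1 = id_{F_0}$ at the cyclic face. The only difference is cosmetic: the paper additionally notes that in the split case the degreewise isomorphism $B^{\otimes q} \cong B^{\otimes q}$ \emph{can} be made explicit (as a composite of iterated coproducts, a permutation, and iterated products in the Hopf algebra $B$), whereas you, like the moving-coordinates proof, leave it implicit, which is harmless since the simplicial compatibility is checked at the level of corepresented functors.
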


\begin{proof}
The proof follows the same lines in the split case as for
moving coordinates.  One difference is that in the split case
the isomorphism
$$
L \otimes \beta(B)_q = L \otimes B^{\otimes q}
\cong L \otimes B^{\otimes q} \cong \pi_* THH(MU)_q
$$
can easily be made explicit as the tensor product of the identity on $L$
and an isomorphism $B^{\otimes q} \cong B^{\otimes q}$, given by the
$(q+1-i)$-fold coproduct
$$
B \longto B^{\otimes q+1-i}
$$
from the $i$-th copy of $B$, followed by a permutation and
the $i$-fold product
$$
B^{\otimes i} \longto B
$$
to the $i$-th copy of $B$.
\end{proof}

In the $p$-typical case, the product map
$$
V \otimes V^{\otimes q} = \pi_*(BP) \otimes \pi_*(BP)^{\otimes q}
	\longto \pi_*(BP \wedge BP^{\wedge q}) = \pi_* THH(BP)_q
$$
becomes an isomorphism after rationalization.  Since
$VT$ is flat over~$V$ we can rewrite the target as
$$
\pi_*((BP \wedge BP) \wedge_{BP} \dots \wedge_{BP} (BP \wedge BP))
\cong VT \otimes_V \dots \otimes_V VT \,.
$$
There is an evident isomorphism
$$
VT \otimes_V \dots \otimes_V VT \cong V \otimes T^{\otimes q} \,,
$$
and $V \otimes T^{\otimes q}$ classifies chains
$$
(F_0; \phi_1, \dots, \phi_q) \quad = \quad
(F_0 \overset{\phi_1}\longfrom F_1 \longfrom \dots
	\overset{\phi_q}\longfrom F_q)
$$
of $q$ composable strict isomorphisms between $p$-typical formal group
laws.  On the other hand, $V \otimes \beta(T)_q = V \otimes T^{\otimes
q}$ classifies $q$-tuples
$$
(F_0, \gamma_1, \dots, \gamma_q) \quad = \quad
(F_0 \overset{\gamma_i}\longfrom F_i)_{i=1}^q
$$
of strict isomorphisms between $p$-typical formal group laws, all with
the same target.

\begin{proposition}
There is an isomorphism of simplicial graded commutative rings
$$
V \otimes \beta(T)_\bullet
	\overset{\cong}\longto \pi_* THH(BP)_\bullet
$$
that, in degree~$q$, classifies the bijection
$$
(F_0; \phi_1, \dots, \phi_q)
	\overset{\cong}\longmapsto (F_0; \gamma_1, \dots, \gamma_q)
$$
where $\gamma_i = \phi_1 \cdots \phi_i$ for $1 \le i \le q$.  Its inverse
is given by $\phi_1 = \gamma_1$ and $\phi_i = \gamma_{i-1}^{-1} \gamma_i$
for $2 \le i \le q$.
\end{proposition}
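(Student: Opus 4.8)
The plan is to run the argument for the moving-coordinate case over $MU$ essentially verbatim, using throughout that $\cT(R) \subset \cG(R)$ is a \emph{full} subgroupoid, so that the composite, the inverse and the identity of $p$-typical strict isomorphisms agree with the corresponding operations in $\cG(R)$ and remain $p$-typical. First I would record that the assignment $(F_0; \phi_1, \dots, \phi_q) \mapsto (F_0; \gamma_1, \dots, \gamma_q)$ with $\gamma_i = \phi_1 \cdots \phi_i$ is natural in the commutative $\bZ_{(p)}$-algebra $R$ over which the $p$-typical formal group laws and their strict isomorphisms are defined, with inverse $\phi_1 = \gamma_1$, $\phi_i = \gamma_{i-1}^{-1} \gamma_i$; this inverse lands among chains of composable $p$-typical strict isomorphisms precisely because $\cT(R)$ is a groupoid. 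Since $V \otimes T^{\otimes q} \cong \pi_* THH(BP)_q$ corepresents the functor of such chains and $V \otimes \beta(T)_q = V \otimes T^{\otimes q}$ corepresents the functor of $q$-tuples with common target, this natural bijection is corepresented by an isomorphism of graded commutative rings $V \otimes \beta(T)_q \overset{\cong}\longto \pi_* THH(BP)_q$, which is the identity for $q \le 1$.

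Second, I would verify that these degreewise isomorphisms commute with the simplicial structure maps by comparing their effect on the underlying sequence $F_0, F_1, \dots, F_q$ of formal group laws. On the chain side the $i$-th face map (for $0 \le i \le q$) is induced by the multiplication of the $i$-th and $(i+1)$-th copies of $BP$, hence by composition of adjacent strict isomorphisms, and corepresents duplicating $F_i$; the last face map multiplies the final and initial copies of $BP$ and corepresents appending $F_0$ via $(\phi_1 \cdots \phi_q)^{-1} \: F_0 \to F_q$; the degeneracies are induced by the unit $S \to BP$ and corepresent omitting one $F_j$ from the sequence. On the bar side the augmentation $T \to \bZ_{(p)}$, the multiplication $T \otimes T \to T$ and the unit $\bZ_{(p)} \to T$ realize exactly the same duplication, appending and omission operations on $(F_0; \gamma_1, \dots, \gamma_q)$, just as in the two preceding subsections for $MU$. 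Because both descriptions act through the same operations on the underlying sequence of $p$-typical formal group laws, the correspondence $\gamma_i = \phi_1 \cdots \phi_i$ intertwines all face and degeneracy maps.

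The one place demanding attention---and the main obstacle---is the wraparound face map, whose value involves the composite inverse $(\phi_1 \cdots \phi_q)^{-1}$: I would check that this strict isomorphism between $p$-typical formal group laws is again $p$-typical, which holds because $\cT(R)$ is a full subgroupoid of $\cG(R)$, so forming this inverse composite does not leave the $p$-typical setting. With this point settled, the verification is formally identical to the moving-coordinate case over $MU$, and the assembled map is the desired isomorphism of simplicial graded commutative rings.
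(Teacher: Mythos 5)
Your proposal is correct and follows essentially the same route as the paper, which simply declares the proof to be the same as the moving-coordinate argument for $\pi_* THH(MU)_\bullet$ with all formal group laws taken $p$-typical: naturality gives the corepresenting ring isomorphisms, and compatibility with faces and degeneracies is checked by comparing the induced operations (duplication, appending $F_0$, omission) on the underlying sequence of formal group laws. Your added emphasis that $\cT(R)$ is a full subgroupoid of $\cG(R)$, so that composites and inverses of strict isomorphisms between $p$-typical formal group laws stay within the $p$-typical setting, is exactly the point the paper's one-line reduction tacitly relies on.
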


\begin{proof}
The proof is the same as for $\pi_* THH(MU)_\bullet$ with moving
coordinates, except that all formal group laws in sight are $p$-typical.
\end{proof}

\subsection{The skeleton spectral sequence}

We return to $MU$ with moving coordinates.

\begin{proposition} \label{prop:skelTHHMU}
The skeleton spectral sequence for $\pi_* THH(MU)$ collapses at
the $E^2$-term
$$
E^2 = E^\infty \cong L \otimes \Tor^C_*(\bZ, \bZ) \,.
$$
For each $n\ge1$ there is a unique class $\lambda'_n \in \pi_{2n+1}
THH(MU)$ detected by $[c_n]$ in $E^\infty_{1,2n}$, and
$$
\pi_* THH(MU) \cong \pi_*(MU) \otimes E(\lambda'_n \mid n\ge 1) \,.
$$
\end{proposition}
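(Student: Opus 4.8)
The plan is to feed the isomorphism of simplicial graded commutative rings $L \otimes \beta(C)_\bullet \cong \pi_* THH(MU)_\bullet$ from the preceding proposition into the skeleton spectral sequence, and then to exploit that $L$ is concentrated in even degrees. First I would identify the $E^2$-term. Since the $(E^1, d^1)$-term of the skeleton spectral sequence is, by construction, the normalized chain complex of the simplicial graded abelian group $[q] \mapsto \pi_* THH(MU)_q$, the ring isomorphism induces an isomorphism of normalized chain complexes $L \otimes N\beta(C)_* \cong N\pi_* THH(MU)_*$. Passing to homology and using that $C$ is flat over $\bZ$ then gives $E^2 \cong L \otimes \Tor^C_*(\bZ, \bZ) = L \otimes E([c_n] \mid n\ge1)$, with each $[c_n]$ in filtration $1$ and internal degree $2n$.

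Next I would prove collapse. The skeleton spectral sequence is multiplicative, so each $d^r$ is a derivation, and it suffices to check that a set of $L$-algebra generators of $E^2$ consists of permanent cycles. The $E^2$-term is generated as an $L$-algebra by $L$ itself, in filtration $0$, together with the classes $[c_n]$, in filtration $1$. A differential out of filtration $0$ lands in negative filtration, and for $r \ge 2$ a differential out of filtration $1$ lands in filtration $1 - r \le -1$; in both cases the target is zero. By the Leibniz rule every $d^r$ with $r \ge 2$ therefore vanishes, and $E^2 = E^\infty$.

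To produce the exterior generators I would again use that $L$ is evenly graded. In total degree $2n+1$ the filtration-$0$ subgroup of $\pi_{2n+1} THH(MU)$ is $E^\infty_{0, 2n+1} = L_{2n+1} = 0$, so the quotient map from the filtration-$1$ subgroup $F_1 \pi_{2n+1} THH(MU)$ onto $E^\infty_{1,2n} = F_1/F_0$ is an isomorphism. Hence there is exactly one class $\lambda'_n \in \pi_{2n+1} THH(MU)$ of filtration $1$ detected by $[c_n]$, which yields existence and uniqueness simultaneously. For the ring structure I would observe that $E^\infty = L \otimes E([c_n])$ is a free $L$-module, so the skeleton filtration has free successive quotients and there are no additive extension problems. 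Graded commutativity forces $2(\lambda'_n)^2 = 0$; moreover $(\lambda'_n)^2$ reduces to $[c_n]^2 = 0$ in $E^\infty$, so it has filtration $\le 1$, and as its total degree is even while the filtration-$1$ associated graded is concentrated in odd degrees, it in fact lies in the torsion-free group $L_{4n+2}$, whence $(\lambda'_n)^2 = 0$. The resulting homomorphism of graded $L$-algebras $L \otimes E(\lambda'_n \mid n\ge1) \to \pi_* THH(MU)$ sends the exterior monomials to classes detecting the corresponding monomials in the $[c_n]$.

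I expect the genuine content to lie in the collapse argument and in the observation $F_0\pi_{2n+1}THH(MU)=0$; the rest is bookkeeping rather than a real obstacle. The one point requiring care is that the comparison homomorphism is filtered (because $\lambda'_n \in F_1$) and induces the identity on the associated gradeds of the exterior-length filtration on the source and the skeleton filtration on the target, so that it is an isomorphism on associated gradeds and therefore an isomorphism; this step reduces the ring-level statement to the freeness of $E^\infty$ over $L$ established above. Alternatively, one may import the additive isomorphism of McClure--Staffeld and use the collapse only to pin down the generators $\lambda'_n$.
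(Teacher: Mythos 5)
Your proposal is correct and follows essentially the same route as the paper: the simplicial ring isomorphism $L \otimes \beta(C)_\bullet \cong \pi_* THH(MU)_\bullet$ identifies $E^1$ and hence $E^2 \cong L \otimes E([c_n] \mid n\ge1)$, collapse follows from multiplicativity with algebra generators in filtrations $0$ and $1$, uniqueness of $\lambda'_n$ follows from $\pi_{2n+1}(MU) = 0$, and the exterior algebra structure follows from the odd degree of $\lambda'_n$, graded commutativity and torsion-freeness. Your write-up merely makes explicit several steps the paper compresses (the Leibniz-rule collapse, the vanishing of $(\lambda'_n)^2$, and the filtered comparison map), so there is nothing to correct.
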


\begin{proof}
The isomorphism of simplicial commutative rings
$$
\pi_* THH(MU)_\bullet \cong L \otimes \beta(C)_\bullet
$$
induces an isomorphism
$$
E^1 = N \pi_* THH(MU)_* \cong L \otimes N \beta(C)_*
$$
of normalized differential graded algebras, hence also of homology
algebras
$$
E^2 \cong L \otimes \Tor^C_*(\bZ, \bZ)
	= L \otimes E([c_n] \mid n\ge1) \,.
$$
The skeleton spectral sequence is a multiplicative first quadrant
spectral sequence.  It follows that it collapses at the $E^2$-term,
since the algebra generators are concentrated in filtrations~$0$ and~$1$.

The class $[c_n] \in E^2_{1,2n} = E^\infty_{1,2n}$ detects
a class $\lambda'_n$ in the image of
$$
\pi_{2n+1}(sk_1 THH(MU)) \longto \pi_{2n+1} THH(MU)
$$
and is well-defined modulo the image of
$$
\pi_{2n+1}(sk_0 THH(MU)) \longto \pi_{2n+1} THH(MU) \,.
$$
Since $\pi_{2n+1}(sk_0 THH(MU)) = \pi_{2n+1}(MU) = 0$, the class
$\lambda'_n \in \pi_{2n+1} THH(MU)$ is in fact well-defined by this
condition.  (We could also have used the fact that $MU$ splits off from
$THH(MU)$, using the augmentation $THH(MU) \to MU$, to arrange that
$\lambda'_n$ maps to zero under the augmentation, but this method of
normalization is irrelevant for the current investigation.)

Since each $\lambda'_n$ is in an odd degree, and $\pi_* THH(MU)$ is
graded-commutative, it follows that the $\lambda'_n$ for $n\ge1$ freely
generate $\pi_* THH(MU)$ over $L \cong \pi_*(MU)$, concluding the proof.
\end{proof}

Here is the split analogue.

\begin{proposition} \label{prop:skelTHHMUsplit}
The skeleton spectral sequence for $\pi_* THH(MU)$ collapses at
the $E^2$-term
$$
E^2 = E^\infty \cong L \otimes \Tor^B_*(\bZ, \bZ) \,.
$$
For each $n\ge1$ there is a unique class $e_n \in \pi_{2n+1}
THH(MU)$ detected by $[b_n]$ in $E^\infty_{1,2n}$, and
$$
\pi_* THH(MU) = L \otimes E(e_n \mid n\ge 1) \,.
$$
\end{proposition}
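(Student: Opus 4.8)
The plan is to transcribe the proof of Proposition~\ref{prop:skelTHHMU} from moving coordinates to absolute coordinates, replacing $C$ by $B$ throughout. The only input that changes is the simplicial isomorphism feeding the skeleton spectral sequence, and the required replacement is furnished by the preceding split-case isomorphism $L \otimes \beta(B)_\bullet \overset{\cong}\longto \pi_* THH(MU)_\bullet$.

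First I would feed this isomorphism into the normalized chain complexes computing the skeleton spectral sequence, obtaining an isomorphism
$$
E^1 = N \pi_* THH(MU)_* \cong L \otimes N\beta(B)_*
$$
of differential graded algebras, and hence on homology an isomorphism
$$
E^2 \cong L \otimes \Tor^B_*(\bZ, \bZ) \,.
$$
Since $B = \bZ[b_n \mid n\ge1]$ is polynomial, hence flat over $\bZ$, its bar homology is the exterior algebra $\Tor^B_*(\bZ, \bZ) = E([b_n] \mid n\ge1)$, with $[b_n]$ in filtration~$1$ and internal degree~$2n$; this is the exact analogue of the computation $\Tor^C_*(\bZ, \bZ) = E([c_n] \mid n\ge1)$ used in the moving case.

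For the remaining steps I would argue exactly as in Proposition~\ref{prop:skelTHHMU}. The skeleton spectral sequence is a multiplicative first quadrant spectral sequence whose algebra generators lie in filtrations~$0$ and~$1$, so no nonzero differential is possible and $E^2 = E^\infty$. The permanent cycle $[b_n] \in E^\infty_{1,2n}$ then detects a class $e_n \in \pi_{2n+1} THH(MU)$, well-defined modulo the image of $\pi_{2n+1}(sk_0 THH(MU)) = \pi_{2n+1}(MU) = 0$, hence uniquely determined. Since each $e_n$ has odd degree and $\pi_* THH(MU)$ is graded-commutative, the classes $e_n$ freely generate $\pi_* THH(MU)$ as an exterior algebra over $L \cong \pi_*(MU)$, yielding the stated splitting.

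I do not expect a genuine obstacle here: the argument is structurally the same as in the moving case, and the single point requiring a (routine) verification is that the bar homology of $B$ is exterior, which holds because $B$, like $C$, is polynomial over $\bZ$.
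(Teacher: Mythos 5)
Your proof is correct and takes exactly the approach the paper intends: the paper states this proposition without a written proof, implicitly deferring to the argument for Proposition~\ref{prop:skelTHHMU}, and your transcription supplies precisely that argument in the split case. All the ingredients you use — the isomorphism $L \otimes \beta(B)_\bullet \cong \pi_* THH(MU)_\bullet$, the identification $\Tor^B_*(\bZ, \bZ) = E([b_n] \mid n\ge1)$ from flatness of the polynomial algebra $B$ over $\bZ$, collapse because the algebra generators lie in filtrations $0$ and $1$, and uniqueness of $e_n$ from $\pi_{2n+1}(MU) = 0$ — match the paper's moving-coordinates proof step for step.
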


The expressions~\eqref{eq:cn-xnbn} for the $c_n$ in terms of the absolute
coordinates in $LB$ lead to relations in $L \otimes \Tor^B_*(\bZ, \bZ)$
which detect the following identities in $\pi_* THH(MU)$:
\begin{align*}
\lambda'_1 &= - e_1 \\
\lambda'_2 &= x_1 e_1 - e_2 \\
\lambda'_3 &= (x_2 - x_1^2) e_1 + x_1 e_2 - e_3 \\
\lambda'_4 &= (2 x_3 - 4 x_1 x_2 + x_1^3) e_1 + (x_2 - x_1^2) e_2
	+ x_1 e_3 - e_4 \,.
\end{align*}

Here is the $p$-typical statement.

\begin{proposition} \label{prop:skelTHHBP}
The skeleton spectral sequence for $\pi_* THH(BP)$ collapses at
the $E^2$-term
$$
E^2 = E^\infty \cong V \otimes \Tor^T_*(\bZ_{(p)}, \bZ_{(p)}) \,.
$$
For each $n\ge1$ there is a unique class $\lambda_n \in \pi_{2p^n-1}
THH(BP)$ detected by $[t_n]$ in $E^\infty_{1,2p^n-2}$, and
$$
\pi_* THH(BP) \cong \pi_*(BP) \otimes E(\lambda_n \mid n\ge 1) \,.
$$
\end{proposition}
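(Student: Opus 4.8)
The plan is to follow the proof of Proposition~\ref{prop:skelTHHMU} essentially verbatim, replacing the Hopf algebroid $(L, LC)$ and the spectrum $MU$ by their $p$-typical counterparts $(V, VT)$ and $BP$. First I would invoke the isomorphism of simplicial graded commutative rings $\pi_* THH(BP)_\bullet \cong V \otimes \beta(T)_\bullet$ established above. Passing to normalized chain complexes, this identifies the $(E^1, d^1)$-term of the skeleton spectral sequence with $V \otimes N\beta(T)_*$ as differential graded algebras. Since $T = \bZ_{(p)}[t_n \mid n\ge1]$ is a free, hence flat, $\bZ_{(p)}$-module, the homology of $N\beta(T)_*$ computes $\Tor^T_*(\bZ_{(p)}, \bZ_{(p)}) = E([t_n] \mid n\ge1)$, where $[t_n]$ is the class of the bar $1$-cycle $t_n$ in $\Tor^T_1(\bZ_{(p)}, \bZ_{(p)})$, sitting in bidegree $(1, 2p^n-2)$. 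Tensoring with $V$ then yields $E^2 \cong V \otimes \Tor^T_*(\bZ_{(p)}, \bZ_{(p)})$, with $V$ in filtration~$0$ and each $[t_n]$ in filtration~$1$.

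Next I would argue collapse. The skeleton spectral sequence is a multiplicative first-quadrant spectral sequence, and the $E^2$-term just computed is generated as an algebra by classes in filtrations $0$ and~$1$. Any higher differential decreases filtration by at least~$2$, so it must vanish on these algebra generators, and hence on all of $E^2$ by the Leibniz rule. Therefore $E^2 = E^\infty$.

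Finally, for the passage from $E^\infty$ to $\pi_* THH(BP)$ I would lift each $[t_n]$. The class $[t_n] \in E^\infty_{1, 2p^n-2}$ detects a class $\lambda_n$ in the image of $\pi_{2p^n-1}(sk_1 THH(BP)) \to \pi_{2p^n-1} THH(BP)$, well-defined modulo the image of $\pi_{2p^n-1}(sk_0 THH(BP)) = \pi_{2p^n-1}(BP)$. Because $2p^n - 1$ is odd while $\pi_*(BP)$ is concentrated in even degrees, this indeterminacy vanishes and $\lambda_n$ is uniquely determined; as in the $MU$ case one could alternatively normalize using the augmentation $THH(BP) \to BP$, but this is not needed. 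Since each $\lambda_n$ lies in the odd degree $2p^n - 1$ and $\pi_* THH(BP)$ is graded-commutative, reading off the associated graded shows that the $\lambda_n$ for $n\ge1$ freely generate $\pi_* THH(BP)$ as an exterior algebra over $V \cong \pi_*(BP)$, giving
$$
\pi_* THH(BP) \cong \pi_*(BP) \otimes E(\lambda_n \mid n\ge1) \,.
$$
I expect no serious obstacle here, since the argument is a direct transcription of the $MU$ case; the only points requiring care are the flatness of $T$ over $\bZ_{(p)}$, needed so that $\Tor^T_*(\bZ_{(p)}, \bZ_{(p)})$ is the claimed exterior algebra, and the odd-degree vanishing $\pi_{2p^n-1}(BP) = 0$ that pins down each $\lambda_n$ uniquely.
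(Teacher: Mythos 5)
Your proposal is correct and matches the paper's argument exactly: the paper states that the proof is the same as for $MU$ (Proposition~\ref{prop:skelTHHMU}), and your transcription carries out precisely that proof, using the isomorphism $\pi_* THH(BP)_\bullet \cong V \otimes \beta(T)_\bullet$, collapse from generators in filtrations $0$ and $1$, and the vanishing of $\pi_{2p^n-1}(BP)$ to pin down $\lambda_n$ uniquely.
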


The proof is the same as for $MU$.

\begin{remark}
Following Andy Baker and Larry Smith, Jim McClure and Ross Staffeldt
\cite{MS93}*{Rem.~4.3} calculated $\pi_* THH(MU) \cong \pi_*(MU)
\otimes E(\lambda'_n \mid n\ge1)$ and $\pi_* THH(BP) \cong \pi_*(BP)
\otimes E(\lambda_n \mid n\ge1)$ as graded rings, where the classes
$\lambda'_n$ in degree~$2n+1$ and $\lambda_n$ in degree~$2p^n-1$ were
only specified in terms of their mod~$p$ Hurewicz images.  Our choices
of generators $\lambda'_n$ and $\lambda_n$ are uniquely defined, and
have the feature that $\sigma(\lambda'_n) = 0$ and $\sigma(\lambda_n)
= 0$.
\end{remark}

\section{The circle action and the right unit}
\label{sec:tcaatru}

For any $S$-algebra $R$, the cyclic structure on $THH(R)_\bullet$
induces a circle action on its realization.  Its restriction
$$
R \wedge S^1_+ \longto THH(R) \wedge S^1_+ \longto THH(R)
$$
to the $0$-skeleton $R = sk_0 THH(R) \subset THH(R)$ factors through the
$1$-skeleton $sk_1 THH(R) \subset THH(R)$ as the map induced by the right
unit $\eta_R \: R \cong S \wedge R \to R \wedge R$, from the pushout
$R \wedge S^1_+ \cong R \wedge (\Delta^1/\partial \Delta^1)_+$ of the maps
$$
R \longfrom R \wedge \partial \Delta^1_+
	\longto R \wedge \Delta^1_+
$$
to the pushout $sk_1 THH(R)$ of the maps
$$
R \longfrom (R \wedge R \wedge \partial \Delta^1_+)
	\cup (R \wedge S \wedge \Delta^1_+)
	\longto R \wedge R \wedge \Delta^1_+ \,.
$$
This follows from the definition of the circle action, which for
a $0$-simplex $x$ traces out the loop given by the $1$-simplex
$t_1 s_0(x)$.
Hence we have a map of horizontal cofiber sequences
$$
\xymatrix{
R \ar[r] \ar[d]_-{=}
	& R \wedge S^1_+ \ar[r] \ar[d]
	& \Sigma R \ar[d] \\
R \ar[r] & sk_1 THH(R) \ar[r] & \Sigma(R \wedge R/S)
}
$$
where the right hand vertical map is the suspension of the composite
$$
R \cong S \wedge R \overset{\eta_R}\longto R \wedge R
	\overset{1\wedge\pi} \longto R \wedge R/S \,.
$$
Using the splitting of
the upper row, we see that the right hand vertical map is
also the composite
$$
\Sigma R \overset{\sigma}\longto sk_1 THH(R) \to \Sigma(R \wedge R/S) \,.
$$
This proves the following result.

\begin{proposition} \label{prop:sigma-etaR}
For $x \in \pi_*(R)$ the homotopy class $\sigma(x) \in \pi_{*+1} THH(R)$
is detected in $E^\infty_{1,*}$ of the skeleton spectral sequence by
the class of the infinite cycle
$$
(1 \wedge \pi) \eta_R(x) \in \pi_*(R \wedge R/S) = E^1_{1,*} \,.
$$
\end{proposition}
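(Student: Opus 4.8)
The plan is to compute $\sigma(x)$ for $x$ in the bottom filtration $\pi_*(R) = \pi_*\,sk_0 THH(R)$ by tracking the circle action through the $1$-skeleton, where a filtration-$1$ detection statement can be read off from a single cofiber sequence. Recall that $\sigma$ is the composite $THH(R) \wedge S^1 \to THH(R) \wedge S^1_+ \to THH(R)$, where the second arrow is the circle action and the first uses the stable section of the split cofiber sequence $1_+ \to S^1_+ \to S^1$. The essential input is the simplicial description of the action: on a $0$-simplex $x$ it traces out the loop represented by the $1$-simplex $t_1 s_0(x)$. First I would use this to observe that the restriction of the action to $sk_0 THH(R) = R$, namely $R \wedge S^1_+ \to THH(R) \wedge S^1_+ \to THH(R)$, already factors through $sk_1 THH(R)$.

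To pin down that factorization I would present both ends as pushouts. The source is $R \wedge S^1_+ \cong R \wedge (\Delta^1/\partial\Delta^1)_+$, the pushout of $R \longfrom R \wedge \partial\Delta^1_+ \longto R \wedge \Delta^1_+$, while $sk_1 THH(R)$ is the pushout of $R \longfrom (R \wedge R \wedge \partial\Delta^1_+) \cup (R \wedge S \wedge \Delta^1_+) \longto R \wedge R \wedge \Delta^1_+$, with the degeneracy $s_0$ inserting the unit $S \to R$. Matching the two pushouts along the loop $t_1 s_0(x)$ then identifies the restricted action with the map induced by the right unit $\eta_R \: R \cong S \wedge R \longto R \wedge R$ on the $1$-simplex coordinate.

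Next I would assemble the resulting map of horizontal cofiber sequences
$$
\xymatrix{
R \ar[r] \ar[d]_-{=}
	& R \wedge S^1_+ \ar[r] \ar[d]
	& \Sigma R \ar[d] \\
R \ar[r] & sk_1 THH(R) \ar[r] & \Sigma(R \wedge R/S)
}
$$
whose bottom row is the sequence $sk_0 THH(R) \to sk_1 THH(R) \to sk_1/sk_0 \simeq \Sigma(R \wedge R/S)$, coming from the normalized part $R \wedge R/S$ of $THH(R)_1 = R \wedge R$. The identification of the restricted action via $\eta_R$ makes the right-hand vertical map the suspension of $R \cong S \wedge R \overset{\eta_R}\longto R \wedge R \overset{1\wedge\pi}\longto R \wedge R/S$. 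The top row is split by the section $S^1 \to S^1_+$, which realizes $\sigma$ on the $0$-skeleton as a map $\Sigma R \overset{\sigma}\longto sk_1 THH(R)$. Since the right square commutes and this section splits the projection $R \wedge S^1_+ \to \Sigma R$, composing $\sigma$ with the quotient $sk_1 THH(R) \to \Sigma(R \wedge R/S)$ recovers exactly the right-hand vertical map; hence on homotopy $\sigma(x)$ maps to $(1 \wedge \pi)\eta_R(x)$. Because $\sigma(x)$ lifts to $sk_1 THH(R)$, the element $(1 \wedge \pi)\eta_R(x) \in E^1_{1,*} = \pi_*(R \wedge R/S)$ is a permanent cycle, and it detects $\sigma(x)$ in $E^\infty_{1,*}$, which is the claim.

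I expect the main obstacle to be the second step: matching the two pushout presentations so that the connecting datum emerges as $\eta_R$ rather than $\eta_L$ or a twisted variant, which demands care with the cyclic operator $t_1$, the degeneracy $s_0$, and the chosen orientation of $\Delta^1$. Verifying that the section $S^1 \to S^1_+$ splits the top cofiber sequence compatibly — so that the $\sigma$ defined via this section agrees with the composite through $sk_1 THH(R)$ — is the remaining point to check, but it follows formally from the split cofiber sequence $1_+ \to S^1_+ \to S^1$ once the geometry of the $1$-simplex is in place.
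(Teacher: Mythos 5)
Your proposal is correct and follows essentially the same route as the paper's own argument: the factorization of the restricted circle action through $sk_1\,THH(R)$ via the $1$-simplex $t_1 s_0(x)$ and the two pushout presentations, the resulting map of cofiber sequences with right-hand vertical map $\Sigma\bigl((1\wedge\pi)\eta_R\bigr)$, and the use of the stable splitting of $S^1_+$ to identify that map with $\sigma$ followed by the quotient $sk_1\,THH(R) \to \Sigma(R \wedge R/S)$. The points you flag as remaining checks (the emergence of $\eta_R$ from matching the pushouts, and the compatibility of the section) are exactly what the paper settles by the same means, so nothing further is needed.
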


We now specialize to $R = MU$.  In terms of moving coordinates, the maps
$\eta_R$ and $\pi$ induce the homomorphisms
$$
L \overset{\eta_R}\longto LC = L \otimes C \overset{1\otimes\pi}\longto
	L \otimes I(C) \,,
$$
where $\eta_R$ is the right unit and $\pi \: C \to I(C)$
is the projection away from $\bZ \to C$.  We can also view $LC \to L
\otimes I(C)$ as the cokernel of the left unit $\eta_L \: L \to LC$.
The split case is practically the same.

\begin{proposition} \label{prop:sigmamn}
The rationalized $\sigma$-operator
$$
\pi_*(MU) \otimes \bQ \longto \pi_{*+1} THH(MU) \otimes \bQ
$$
is the (right) derivation given by
$$
\sigma(m_n) = \lambda'_n \,.
$$
\end{proposition}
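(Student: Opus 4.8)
The plan is to reduce to a computation on polynomial generators and then invoke the derivation property. Since $\sigma$ is a rational derivation by \cite{AR05}, and $\pi_*(MU) \otimes \bQ \cong \bQ[m_n \mid n \ge 1]$ is polynomial on the logarithmic coefficients, the rationalized operator is completely determined by its values $\sigma(m_n)$. So it will be enough to prove $\sigma(m_n) = \lambda'_n$ in $\pi_{2n+1} THH(MU) \otimes \bQ$ for each $n \ge 1$.

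First I would apply Proposition~\ref{prop:sigma-etaR} to the rational class $x = m_n$. Because $\sigma$ is additive, this shows that $\sigma(m_n)$ lies in the first skeleton filtration $F_1 \pi_{2n+1} THH(MU) \otimes \bQ$ and that its image in $E^\infty_{1,2n}$ is the class of $(1 \otimes \pi) \eta_R(m_n)$, computed in moving coordinates along $L \overset{\eta_R}\longto LC = L \otimes C \overset{1 \otimes \pi}\longto L \otimes I(C)$. Next I would evaluate this detecting class using Proposition~\ref{prop:etaR-on-mn}, which gives $\eta_R(m_n) = \sum_{(i+1)(j+1) = n+1} m_i c_j^{i+1}$ in $LC \otimes \bQ$. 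Applying $1 \otimes \pi$ removes every term with $j = 0$, for which $c_0^{i+1} = 1$ is a unit killed by the projection; and reducing modulo decomposables $L \otimes I(C)^2$, so as to pass from $E^1_{1,2n} = L \otimes I(C)$ to $E^\infty_{1,2n} = L \otimes I(C)/I(C)^2$, removes every term with $i \ge 1$, for which $c_j^{i+1}$ is a power of exponent at least two. The sole surviving contribution is the $i = 0$, $j = n$ term $m_0 c_n = c_n$, with class $[c_n]$. This is precisely the feature that makes moving coordinates preferable: the formula for $\eta_R(m_n)$ is diagonal modulo decomposables, unlike the absolute-coordinate expansion~\eqref{eq:etaRmn-in-LB}, whose linear part mixes several $\bar b_{n-i}$.

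It then remains to upgrade this agreement of leading terms into an identity. Both $\sigma(m_n)$ and $\lambda'_n$ lie in $F_1 \pi_{2n+1} THH(MU) \otimes \bQ$ and reduce to the same class $[c_n]$ in $F_1 / F_0 = E^\infty_{1,2n}$, so their difference lies in $F_0 \pi_{2n+1} THH(MU) \otimes \bQ = \im(\pi_{2n+1}(MU) \otimes \bQ)$. Exactly as in the proof of Proposition~\ref{prop:skelTHHMU}, this group vanishes because $\pi_{2n+1}(MU) = L_{2n+1} = 0$ in the odd degree $2n+1$, whence $\sigma(m_n) = \lambda'_n$. I expect no serious obstacle here: the one step demanding care is this last passage from detection to equality, which hinges on the vanishing of the bottom filtration $F_0$ in odd degrees, while the term-by-term analysis of $\eta_R(m_n)$ is routine bookkeeping.
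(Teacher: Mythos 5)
Your proposal is correct and follows essentially the same route as the paper: apply Proposition~\ref{prop:sigma-etaR} together with the moving-coordinate formula of Proposition~\ref{prop:etaR-on-mn}, observe that the $j=0$ terms die in $L \otimes I(C)$ and the $i\ge1$ terms die modulo $I(C)^2$, so that $\sigma(m_n)$ is detected by $[c_n]$, and conclude by the uniqueness of the class so detected. Your explicit remark that the difference $\sigma(m_n) - \lambda'_n$ lies in the filtration-zero piece $\im(\pi_{2n+1}(MU) \otimes \bQ) = 0$ is exactly the uniqueness argument the paper delegates to Proposition~\ref{prop:skelTHHMU}.
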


\begin{proof}
By Propositions~\ref{prop:etaR-on-mn} and~\ref{prop:sigma-etaR},
$\sigma(m_n)$ is detected in $E^\infty_{1,*} \otimes \bQ$ by the image of
$$
\eta_R(m_n) = \sum_{(i+1)(j+1)=n+1} m_i c_j^{i+1}
$$
under the projections
$$
L \otimes C \otimes \bQ
\longto
L \otimes I(C) \otimes \bQ
\longto
L \otimes I(C)/I(C)^2 \otimes \bQ
= L \otimes \Tor^C_1(\bZ, \bZ) \otimes \bQ \,.
$$
The term with $j=0$ maps to zero in $L \otimes I(C) \otimes \bQ$, and the
terms with $i\ge1$ map to zero in $L \otimes I(C)/I(C)^2 \otimes \bQ$,
so only the term with $i=0$ and $j=n$ remains.  Hence $\sigma(m_n)$
is detected by $[c_n]$ in $E^\infty_{1,*} \otimes \bQ =
L \otimes \Tor^C_1(\bZ, \bZ) \otimes \bQ$, and this characterizes
the homotopy class $\lambda'_n \in \pi_{2n+1} THH(MU)
\subset \pi_{2n+1} THH(MU) \otimes \bQ$.
\end{proof}

\begin{theorem} \label{thm:sigma-Llambdaprimen}
The $\sigma$-operator
$$
\sigma \: \pi_* THH(MU) \longto \pi_{*+1} THH(MU)
$$
is the (right) $\bZ$-linear derivation acting on
$$
\pi_* THH(MU) \cong \pi_*(MU) \otimes E(\lambda'_n \mid n\ge1)
$$
by taking $x \in L \cong \pi_*(MU) \subset \pi_* THH(MU)$ to the
homotopy class $\sigma(x) \in \pi_{*+1} THH(MU)$ detected by the image
of $\eta_R(x) \in LC$ in $L \otimes \Tor^C_1(\bZ, \bZ) = E^\infty_{1,*}$,
while
$$
\sigma(\lambda'_n) = 0
$$
for all $n\ge1$.  In low degrees,
\begin{align*}
\sigma(x_1) &= - 2 \lambda'_1 \\
\sigma(x_2) &= - 4 x_1 \lambda'_1 - 3 \lambda'_2 \\
\sigma(x_3) &= - (4 x_2 + 5 x_1^2) \lambda'_1 - 6 x_1 \lambda_2'
	- 2 \lambda'_3 \\
\sigma(x_4) &= - 4 (2 x_3 - x_1 x_2) \lambda'_1
	- 3 (2 x_2 + x_1^2) \lambda'_2 - 8 x_1 \lambda'_3 - 5 \lambda'_4 \,.
\end{align*}
\end{theorem}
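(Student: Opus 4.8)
The plan is to assemble the theorem from three ingredients: the derivation property, the detection formula for $\sigma$ on the subring $L$, and the vanishing $\sigma(\lambda'_n)=0$. To begin, I would invoke \cite{AR05} to record that $\sigma$ is a right $\bZ$-linear derivation, which holds because $THH(MU)$ is an $E_\infty$ ring spectrum. Since $\pi_* THH(MU) \cong L \otimes E(\lambda'_n \mid n\ge1)$ is generated as a ring by the $x_n$ together with the $\lambda'_n$, this reduces the entire calculation to determining $\sigma$ on these two families of generators.

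For $x \in L \cong \pi_*(MU)$ the displayed detection statement is immediate from Proposition~\ref{prop:sigma-etaR} and the collapse in Proposition~\ref{prop:skelTHHMU}. Indeed, Proposition~\ref{prop:sigma-etaR} asserts that $\sigma(x)$ is detected by the image of $\eta_R(x)$ under the projection $LC \to L \otimes I(C)$, which is the cokernel of $\eta_L$, and the spectral sequence collapse identifies $E^\infty_{1,*}$ with $L \otimes \Tor^C_1(\bZ,\bZ) = L \otimes I(C)/I(C)^2$, recording the further quotient by decomposables.

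The crux is $\sigma(\lambda'_n)=0$, and here I would exploit the relation $\sigma^2 = \eta\sigma$ noted in the introduction. Since $\eta \in \pi_1(S)$ has order two it vanishes after rationalization, so $\sigma^2 = 0$ on $\pi_* THH(MU)\otimes\bQ$. By Proposition~\ref{prop:sigmamn} the rationalized operator satisfies $\sigma(m_n)=\lambda'_n$, whence $\sigma(\lambda'_n) = \sigma^2(m_n) = 0$ rationally. Because $\pi_* THH(MU) = L \otimes E(\lambda'_n)$ is a free $\bZ$-module, it injects into its rationalization, so the vanishing holds integrally. I expect this passage from the rational Proposition~\ref{prop:sigmamn} to an integral conclusion to be the delicate point of the argument; it is the torsion-freeness of $\pi_* THH(MU)$ that precisely licenses the lift.

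For the low-degree formulas I would apply the derivation $\sigma$ to the expressions \eqref{eq:xn-mn} for the $x_n$ in terms of the logarithmic coefficients $m_i$, using $\sigma(m_i)=\lambda'_i$. Thus $x_1=-2m_1$ gives $\sigma(x_1)=-2\lambda'_1$, while $x_2=4m_1^2-3m_2$ gives $\sigma(x_2)=8m_1\lambda'_1-3\lambda'_2$; rewriting the coefficients back in terms of the $x_n$ via \eqref{eq:xn-mn}, for instance $8m_1=-4x_1$ and $3m_2=x_1^2-x_2$, converts these into the stated integral combinations of the $\lambda'_n$. Torsion-freeness once more guarantees that the half-integers appearing in intermediate steps cancel to leave the asserted $\bZ$-linear formulas.
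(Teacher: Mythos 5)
Your proposal is correct and takes essentially the same approach as the paper's proof: it assembles the general statements from Propositions~\ref{prop:skelTHHMU}, \ref{prop:sigma-etaR} and~\ref{prop:sigmamn}, deduces $\sigma(\lambda'_n) = \sigma^2(m_n) = 0$ rationally and then integrally via torsion-freeness of $\pi_* THH(MU)$, and obtains the explicit formulas by applying the derivation to~\eqref{eq:xn-mn} and rewriting the rational expressions in terms of the $x_n$. The only cosmetic difference is that you justify $\sigma^2 = 0$ rationally from $\eta$ being torsion, whereas the paper appeals to $\sigma$ acting as a differential because $\eta$ acts trivially on the torsion-free group $\pi_* THH(MU)$; these are the same observation.
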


\begin{proof}
The general statements summarize Propositions~\ref{prop:skelTHHMU},
\ref{prop:sigma-etaR} and~\ref{prop:sigmamn}.  We know that $\sigma(\lambda'_n)
= \sigma^2(m_n) = 0$ in $\pi_* THH(MU) \otimes \bQ$, since $\sigma$ acts
as a differential.  Hence $\sigma(\lambda'_n) = 0$ in $\pi_* THH(MU)$,
since these groups are torsion-free.

For the explicit formulas, we first calculate in $\pi_* THH(MU) \otimes
\bQ \cong L \otimes E(\lambda'_n \mid n\ge1) \otimes \bQ$, using the
expressions~\eqref{eq:xn-mn} for the $x_n$ in terms of the $m_n$,
and applying the derivation $\sigma$:
\begin{align*}
\sigma(x_1) &= - 2 \lambda'_1 \\
\sigma(x_2) &= 8 m_1 \lambda'_1 - 3 \lambda'_2 \\
\sigma(x_3) &= 12 (m_2 - 3 m_1^2) \lambda'_1 + 12 m_1 \lambda_2'
	- 2 \lambda'_3 \\
\sigma(x_4) &= 8 (8 m_1^3 - 9 m_1 m_2 + 2 m_3) \lambda'_1
	+ 18 (m_2 - 2 m_1^2) \lambda'_2 + 16 m_1 \lambda'_3 - 5 \lambda'_4 \,.
\end{align*}
The asserted formulas in $\pi_* THH(MU) \cong L \otimes E(\lambda'_n
\mid n\ge1)$ then follow, by rewriting the polynomials in the $m_n$
as elements of $L$.
\end{proof}

Here is the analogous result in the split case.  We do not have a closed
formula for $\sigma(e_n)$.

\begin{theorem} \label{thm:sigma-Len}
The $\sigma$-operator $\sigma \: \pi_* THH(MU) \to \pi_{*+1} THH(MU)$
is the (right) $\bZ$-linear derivation acting on
$$
\pi_* THH(MU) \cong \pi_*(MU) \otimes E(e_n \mid n\ge1)
$$
by taking $x \in L \cong \pi_*(MU) \subset \pi_* THH(MU)$ to the
homotopy class $\sigma(x) \in \pi_{*+1} THH(MU)$ detected by the image of
$\eta_R(x) \in LB$ in $ L \otimes \Tor^B_1(\bZ, \bZ) = E^\infty_{1,*}$.
The classes $\sigma(e_n)$ are inductively determined by the relation
$\sigma^2(x_n) = 0$.  In low degrees,
\begin{align*}
\sigma(x_1) &= 2 e_1 \\
\sigma(x_2) &= x_1 e_1 + 3 e_2 \\
\sigma(x_3) &= (2 x_2 + x_1^2) e_1 + 4 x_1 e_2 + 2 e_3 \\
\sigma(x_4) &= (2 x_1 x_2 - 2 x_3) e_1 + x_2 e_2 + 3 x_1 e_3 + 5 e_4 
\intertext{and}
\sigma(e_1) &= 0 \\
\sigma(e_2) &= 0 \\
\sigma(e_3) &=  e_1 e_2 \\
\sigma(e_4) &= 2 e_1 e_3 \,.
\end{align*}
\end{theorem}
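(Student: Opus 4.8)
The plan is to imitate the proof of Theorem~\ref{thm:sigma-Llambdaprimen}, but with the absolute coordinates $b_n$ replacing the moving coordinates $c_n$, and then to pin down the classes $\sigma(e_n)$ from the single relation $\sigma^2 = 0$. The general assertions are immediate restatements of earlier results: that $\sigma$ is a $\bZ$-linear right derivation is \cite{AR05}, that $\pi_* THH(MU) \cong L \otimes E(e_n \mid n\ge1)$ with $e_n$ detected by $[b_n]$ is Proposition~\ref{prop:skelTHHMUsplit}, and that $\sigma(x)$ for $x\in L$ is detected by the image of $\eta_R(x)$ in $L \otimes \Tor^B_1(\bZ,\bZ) = E^\infty_{1,*}$ is Proposition~\ref{prop:sigma-etaR}. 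The four formulas for $\sigma(x_n)$ then require no new work: the projection $LB \to L \otimes I(B)/I(B)^2$ annihilates the constant term and every $b$-decomposable monomial, so the $b$-linear part of \eqref{eq:etaRxn-xnbn}, read with $[b_k]$ replaced by $e_k$, is precisely the displayed value of $\sigma(x_n)$ for $1\le n\le 4$.

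Next I would note that $\sigma$ is a differential on $\pi_* THH(MU)$. Since $\sigma^2 = \eta\sigma$ and $2\eta = 0$, while $\pi_* THH(MU) \cong L \otimes E(e_n \mid n\ge1)$ is a free, hence torsion-free, graded abelian group, multiplication by $\eta$ carries a $2$-torsion class into a torsion-free group and so vanishes. Therefore $\sigma^2 = 0$ identically.

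The classes $\sigma(e_n)$ are now extracted by induction on $n$ from $\sigma^2(x_n) = 0$. Writing $\sigma(x_n) = \sum_{k=1}^{n} A_{n,k} e_k$ with $A_{n,k} \in L$ as in the first paragraph, I would apply $\sigma$ once more via the graded Leibniz rule for the degree $+1$ right derivation, so that each summand contributes $A_{n,k}\sigma(e_k) - \sigma(A_{n,k})\,e_k$ (using that $|e_k|$ is odd). Here every $\sigma(A_{n,k})$ is known, being a derivation applied to an element of $L$, and every $\sigma(e_k)$ with $k<n$ is known by the inductive hypothesis, so the unique undetermined quantity is $A_{n,n}\sigma(e_n)$. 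The leading coefficient $A_{n,n}$ is the coefficient of $b_n$ in $\eta_R(x_n)$, a nonzero integer (equal, up to sign, to the coefficient of $m_n$ in $x_n$ from \eqref{eq:xn-mn}, which is $\pm p$ when $n+1$ is a power of a prime $p$ and $\pm 1$ otherwise). Because $\pi_{2n+2} THH(MU)$ is torsion-free and the genuine $\sigma(e_n)$ solves the equation, dividing by $A_{n,n}$ determines $\sigma(e_n)$ completely. The base steps $2\sigma(e_1) = 0$ and $3\sigma(e_2) = 0$ give $\sigma(e_1) = \sigma(e_2) = 0$, and the cases $n=3,4$ then yield $\sigma(e_3) = e_1 e_2$ and $\sigma(e_4) = 2 e_1 e_3$.

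The step that demands the most care, and the likeliest source of error, is the sign bookkeeping, since it is exactly where the split case differs from the moving-coordinate case: there $\sigma(\lambda'_n)=0$, so no product of two odd classes ever arises to test the convention. Both the Leibniz sign $(-1)^{|b|}$ and the graded-commutativity relations $e_j e_i = -e_i e_j$ enter, and they nearly cancel: in $\sigma^2(x_3) = 0$ the two cross terms contribute $6\,e_1 e_2$ and $-8\,e_1 e_2$, giving $\sigma^2(x_3) = -2\,e_1 e_2 + 2\sigma(e_3)$ and hence $\sigma(e_3) = e_1 e_2$, whereas the opposite (left-derivation) convention would instead return $-e_1 e_2$. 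I would therefore fix the right-derivation convention of \cite{AR05} at the outset and carry it consistently; with that settled, the computation of $\sigma(e_4)$, and of any further $\sigma(e_n)$, is entirely mechanical.
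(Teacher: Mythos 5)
Your proposal is correct and follows essentially the same route as the paper's proof: read off $\sigma(x_n)$ from the $b$-linear part of $\eta_R(x_n)$ in \eqref{eq:etaRxn-xnbn}, then solve for $\sigma(e_n)$ from $\sigma^2(x_n)=0$ using the right-derivation Leibniz rule $\sigma(xy)=x\sigma(y)+(-1)^{|y|}\sigma(x)y$ and torsion-freeness of $\pi_* THH(MU)$; your cross terms $6\,e_1e_2$ and $-8\,e_1e_2$ reproduce exactly the paper's $-2\,e_1e_2+2\sigma(e_3)$. Your two small additions --- justifying $\sigma^2=0$ via the vanishing of the $\eta$-action on a torsion-free group, and noting that the leading coefficient of $b_n$ in $\eta_R(x_n)$ is $\pm d_n\neq 0$ so the induction determines $\sigma(e_n)$ for every $n$ --- are correct and make explicit what the paper leaves implicit.
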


\begin{proof}
For $x_n$ we use~\eqref{eq:etaRxn-xnbn} to calculate
\begin{align*}
(1\otimes\pi)\eta_R(x_1) &= 2 b_1 \\
(1\otimes\pi)\eta_R(x_2) &= x_1 b_1 + (3 b_2 - 2 b_1^2) \\
(1\otimes\pi)\eta_R(x_3) &= (2 x_2 + x_1^2) b_1 +
	x_1 (4 b_2 - b_1^2) + (2 b_3 + 2 b_1 b_2 - 2 b_1^3) \\
(1\otimes\pi)\eta_R(x_4) &=  (2 x_1 x_2 - 2 x_3) b_1 + x_2 (b_2 - b_1^2)
        + x_1 (3 b_3 - 8 b_1 b_2 + 5 b_1^3) \\
&\qquad + (5 b_4 - 14 b_1 b_3 - 6 b_2^2 + 25 b_1^2 b_2 - 10 b_1^4)
\end{align*}
in $E^1_{1,*} = L \otimes I(B)$.  Hence $\sigma(x_n)$ is detected by
\begin{align*}
[(1\otimes\pi)\eta_R(x_1)] &= 2 [b_1] \\
[(1\otimes\pi)\eta_R(x_2)] &= x_1 [b_1] + 3 [b_2] \\
[(1\otimes\pi)\eta_R(x_3)] &= (2 x_2 + x_1^2) [b_1] + 4 x_1 [b_2] + 2 [b_3] \\
[(1\otimes\pi)\eta_R(x_4)] &= (2 x_1 x_2 - 2 x_3) [b_1] + x_2 [b_2]
	+ 3 x_1 [b_3] + 5 [b_4]
\end{align*}
in $E^\infty_{1,*} = L \otimes \Tor^B_1(\bZ, \bZ)$.
Since $e_n \in \pi_{2n+1} THH(MU)$ is characterized by being detected
by $[b_n] \in E^1_{1,2n}$, the stated formulas for $\sigma(x_n)$ hold.
Furthermore, $\sigma^2 = 0$ when acting on $\pi_* THH(MU)$, and the $e_n$
generate an exterior algebra, so it follows that
\begin{align*}
0 &= \sigma(2 e_1) = 2 \sigma(e_1) \\
0 &= \sigma(x_1 e_1 + 3 e_2) = 3 \sigma(e_2) \\
0 &= \sigma((2 x_2 + x_1^2) e_1 + 4 x_1 e_2 + 2 e_3)
	=  -2 e_1 e_2 + 2 \sigma(e_3) \\
0 &= \sigma((2 x_1 x_2 - 2 x_3) e_1 + x_2 e_2 + 3 x_1 e_3 + 5 e_4)
	= - 10 e_1 e_3 + 5 \sigma(e_4) \,.
\end{align*}
Here we have used the form of the Leibniz rule that is appropriate for
right actions, i.e., $\sigma(xy) = x \sigma(y) + (-1)^{|y|} \sigma(x) y$.
Since $\pi_* THH(MU)$ is torsion-free, this implies the stated formulas
for $\sigma(e_n)$.
\end{proof}

We now turn to the $p$-typical case.

\begin{proposition} \label{prop:sigmaln}
The rationalized $\sigma$-operator $\pi_*(BP) \otimes \bQ
\to \pi_{*+1} THH(BP) \otimes \bQ$ is the (right) derivation given by
$$
\sigma(\ell_n) = \lambda_n \,.
$$
\end{proposition}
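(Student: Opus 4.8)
The plan is to follow the proof of Proposition~\ref{prop:sigmamn} verbatim, with the general right-unit formula of Proposition~\ref{prop:etaR-on-mn} replaced by its $p$-typical counterpart~\eqref{eq:etaRelln}. Since $\sigma$ is a (right) derivation and $\pi_*(BP) \otimes \bQ = V \otimes \bQ \cong \bQ[\ell_n \mid n\ge1]$, the rationalized $\sigma$-operator on $\pi_*(BP) \otimes \bQ$ is determined by its values on the polynomial generators $\ell_n$. It therefore suffices to establish that $\sigma(\ell_n) = \lambda_n$ for each $n\ge1$.

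First I would apply Proposition~\ref{prop:sigma-etaR} with $R = BP$, so that $\sigma(\ell_n)$ is detected in $E^\infty_{1,*} \otimes \bQ$ by the image of $\eta_R(\ell_n)$ under the projections
$$
V \otimes T \otimes \bQ
\longto V \otimes I(T) \otimes \bQ
\longto V \otimes I(T)/I(T)^2 \otimes \bQ
= V \otimes \Tor^T_1(\bZ_{(p)}, \bZ_{(p)}) \otimes \bQ \,.
$$
Substituting~\eqref{eq:etaRelln}, namely $\eta_R(\ell_n) = \sum_{i+j=n} \ell_i t_j^{p^i}$ with $\ell_0 = 1$ and $t_0 = 1$, I would then track which summands survive both projections.

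The crux is the term-by-term analysis. The summand with $j = 0$ equals $\ell_n$ and lies in the unit part $V \otimes \bQ$, so it is killed by the first projection $1 \otimes \pi$. Among the summands with $j \ge 1$, the decisive observation is that whenever $i \ge 1$ the exponent $p^i$ is at least $p \ge 2$, so $t_j^{p^i}$ lies in $I(T)^2$ and vanishes in $I(T)/I(T)^2$. Only the summand with $i = 0$ and $j = n$, which is $\ell_0\, t_n = t_n$, survives both steps, mapping to $[t_n]$. Hence $\sigma(\ell_n)$ is detected by $[t_n]$ in $E^\infty_{1,2p^n-2} \otimes \bQ$, and by Proposition~\ref{prop:skelTHHBP} this is exactly the defining property of $\lambda_n \in \pi_{2p^n-1} THH(BP)$. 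As this homotopy class lies in the torsion-free group $\pi_* THH(BP) \subset \pi_* THH(BP) \otimes \bQ$, we conclude $\sigma(\ell_n) = \lambda_n$. I anticipate no genuine obstacle: the argument is formally identical to the $MU$ case, and the single point requiring attention is that the $p$-typical exponents $p^i$ (rather than the exponents $i+1$ of the $MU$ formula) still force every mixed term into $I(T)^2$, which holds because $p^i \ge 2$ for $i \ge 1$.
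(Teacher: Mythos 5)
Your proof is correct and follows essentially the same route as the paper's: both invoke Proposition~\ref{prop:sigma-etaR} together with~\eqref{eq:etaRelln}, observe that the $j=0$ term dies in $V \otimes I(T) \otimes \bQ$ and the $i\ge1$ terms die in $V \otimes I(T)/I(T)^2 \otimes \bQ$ (precisely because $p^i \ge 2$), and conclude that the surviving class $[t_n]$ characterizes $\lambda_n$ via Proposition~\ref{prop:skelTHHBP}.
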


\begin{proof}
By~\eqref{eq:etaRelln} and Proposition~\ref{prop:sigma-etaR},
$\sigma(\ell_n)$ is detected in $E^\infty_{1,*} \otimes \bQ$ by the image of
$$
\eta_R(\ell_n) = \sum_{i+j=n} \ell_i t_j^{p^i}
$$
under the projections
$$
V \otimes T \otimes \bQ
\longto
V \otimes I(T) \otimes \bQ
\longto
V \otimes I(T)/I(T)^2 \otimes \bQ
= V \otimes \Tor^T_1(\bZ_{(p)}, \bZ_{(p)}) \otimes \bQ \,.
$$
The term with $j=0$ maps to zero in $V \otimes I(T) \otimes \bQ$, and the
terms with $i\ge1$ map to zero in $V \otimes I(T)/I(T)^2 \otimes \bQ$,
so only the term with $i=0$ and $j=n$ remains.  Hence $\sigma(\ell_n)$
is detected by $[t_n]$ in $E^\infty_{1,*} \otimes \bQ =
V \otimes \Tor^T_1(\bZ_{(p)}, \bZ_{(p)}) \otimes \bQ$, and this characterizes
the homotopy class $\lambda_n \in \pi_{2p^n-1} THH(BP)
\subset \pi_{2p^n-1} THH(BP) \otimes \bQ$.
\end{proof}

\begin{theorem} \label{thm:sigma-Vlambdan}
The $\sigma$-operator
$$
\sigma \: \pi_* THH(BP) \longto \pi_{*+1} THH(BP)
$$
is the (right) $\bZ_{(p)}$-linear derivation acting on
$$
\pi_* THH(BP) \cong \pi_*(BP) \otimes E(\lambda_n \mid n\ge1)
$$
by taking $x \in V \cong \pi_*(BP) \subset \pi_* THH(BP)$ to the class
$\sigma(x) \in \pi_{*+1} THH(BP)$ detected by the image of $\eta_R(x)
\in VT$ in $V \otimes \Tor^T_1(\bZ_{(p)}, \bZ_{(p)}) = E^\infty_{1,*}$, while
$$
\sigma(\lambda_n) = 0
$$
for all $n\ge1$.  In low degrees,
\begin{align*}
\sigma(v_1) &= p \lambda_1 \\
\sigma(v_2) &= p \lambda_2 - (p+1) v_1^p \lambda_1 \\
\sigma(v_3) &= p \lambda_3 - (p v_1 v_2^{p-1} + v_1^{p^2}) \lambda_2 \\
	&\qquad - (v_2^p - (p+1) v_1^{p+1} v_2^{p-1}
		+ p^2 v_1^{p^2-1} v_2 + p v_1^{p^2+p}) \lambda_1 \,.
\end{align*}
\end{theorem}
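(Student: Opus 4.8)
The plan is to mirror the proof of Theorem~\ref{thm:sigma-Llambdaprimen}, replacing $(L, LC)$, the logarithmic coefficients $m_n$ and the generators $x_n$ by $(V, VT)$, the coefficients $\ell_n$ and the Hazewinkel generators $v_n$. First I would assemble the general statements. That $\sigma$ is a right $\bZ_{(p)}$-linear derivation acting on $\pi_* THH(BP) \cong \pi_*(BP) \otimes E(\lambda_n \mid n\ge1)$ follows from the fact, recalled in the Introduction, that $THH(BP)$ is an $E_3$ ring spectrum with $\sigma$ a right derivation, together with the identification of the ring structure in Proposition~\ref{prop:skelTHHBP}. The description of $\sigma(x)$ for $x \in V$, as the class detected by the image of $\eta_R(x) \in VT$ in $V \otimes \Tor^T_1(\bZ_{(p)}, \bZ_{(p)}) = E^\infty_{1,*}$, is exactly Proposition~\ref{prop:sigma-etaR}.

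Next I would prove $\sigma(\lambda_n) = 0$. By Proposition~\ref{prop:sigmaln} we have $\lambda_n = \sigma(\ell_n)$ in $\pi_* THH(BP) \otimes \bQ$, so $\sigma(\lambda_n) = \sigma^2(\ell_n)$ rationally; since $\eta$ is torsion it acts trivially after rationalization, hence $\sigma$ is a rational differential and $\sigma^2(\ell_n) = 0$. As $\pi_* THH(BP)$ is torsion-free, it injects into its rationalization, so $\sigma(\lambda_n) = 0$ holds integrally as well.

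For the explicit formulas I would differentiate the Hazewinkel relation~\eqref{eq:Hazewinkel} directly, rather than first solving for $v_n$ in terms of the $\ell_n$. Applying the rational derivation $\sigma$ to $p\ell_n = \sum_{i=0}^{n-1} \ell_i v_{n-i}^{p^i}$, using $\sigma(\ell_i) = \lambda_i$ and $\ell_0 = 1$, and noting that every $v_j$ and $\ell_j$ sits in even degree, so the Koszul signs are trivial and $\sigma(v_{n-i}^{p^i}) = p^i v_{n-i}^{p^i-1} \sigma(v_{n-i})$, yields the recursion
$$
p \lambda_n = \sigma(v_n) + \sum_{i=1}^{n-1} \Bigl( v_{n-i}^{p^i} \lambda_i + (p^i \ell_i) v_{n-i}^{p^i-1} \sigma(v_{n-i}) \Bigr) \,.
$$
The point that makes this an integral identity, rather than merely a rational one, is that $\sigma(v_n)$ occurs with coefficient $1$ (the $i=0$ term), so no division by $p$ is needed to solve for it, while $p^i \ell_i \in V$ by Lemma~\ref{lem:pnellninV}, so the remaining coefficients already lie in $\pi_*(BP)$.

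Finally I would run this recursion in low degrees using~\eqref{eq:pnelln}. With $p\ell_1 = v_1$ the case $n=1$ gives $\sigma(v_1) = p\lambda_1$; substituting $p\ell_1 = v_1$ and $\sigma(v_1) = p\lambda_1$ into the case $n=2$ collapses the $i=1$ term to $(p+1) v_1^p \lambda_1$, giving $\sigma(v_2) = p\lambda_2 - (p+1) v_1^p \lambda_1$; and the case $n=3$, using $p^2 \ell_2 = p v_2 + v_1^{p+1}$, produces the stated expression for $\sigma(v_3)$. I expect the only real obstacle to be bookkeeping: keeping the even-degree sign conventions straight and checking that each substitution of a previously computed $\sigma(v_{n-i})$ leaves the expression integral, i.e.\ that the putative denominators cancel. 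The conceptual content is entirely carried by Propositions~\ref{prop:sigma-etaR} and~\ref{prop:sigmaln} together with the integrality statement of Lemma~\ref{lem:pnellninV}.
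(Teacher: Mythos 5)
Your proposal is correct and takes essentially the same route as the paper: the general statements are assembled from Propositions~\ref{prop:skelTHHBP}, \ref{prop:sigma-etaR} and~\ref{prop:sigmaln}, the vanishing $\sigma(\lambda_n)=\sigma^2(\ell_n)=0$ is deduced rationally and transferred integrally by torsion-freeness, and the explicit formulas are obtained exactly as in the paper by applying the derivation $\sigma$ directly to the Hazewinkel relation~\eqref{eq:Hazewinkel} to get the recursion~\eqref{eq:sigmavn-recursive}, whose coefficients are integral by Lemma~\ref{lem:pnellninV}, and then running it in low degrees via~\eqref{eq:pnelln}.
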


\begin{proof}
The general results are proved as for $MU$ with moving coordinates.
For the explicit formulas, we apply the derivation~$\sigma$
to~\eqref{eq:Hazewinkel}, to obtain
\begin{equation} \label{eq:sigmavn-recursive}
p \lambda_n = \sigma(v_n) + \sum_{i=1}^{n-1} \Bigl(v_{n-i}^{p^i} \lambda_i
	+ (p^i \ell_i) v_{n-i}^{p^i-1} \sigma(v_{n-i}) \Bigr) \,.
\end{equation}
Here $p^i \ell_i$ lies in $V$ by Lemma~\ref{lem:pnellninV}, and
is listed in low degrees in~\eqref{eq:pnelln}.  This leads to
\begin{align*}
p \lambda_1 &= \sigma(v_1) \\
p \lambda_2 &= \sigma(v_2) + (v_1^p \lambda_1 + v_1 v_1^{p-1} \sigma(v_1)) \\
p \lambda_3 &= \sigma(v_3) + (v_2^p \lambda_1 + v_1 v_2^{p-1} \sigma(v_2))
	+ (v_1^{p^2} \lambda_2 + (p v_2 + v_1^{p+1}) v_1^{p^2-1} \sigma(v_1))
\end{align*}
which we can rewrite as stated.
\end{proof}

\section{The circle Tate construction}
\label{sec:tctc}

We can now calculate the $d^2$-differential and $E^3 = E^4$-term of
the circle Tate spectral sequence
$$
E^2_{*,*} = \bZ[t, t^{-1}] \otimes \pi_* THH(MU)
	\Longrightarrow \pi_* THH(MU)^{tS^1} \,,
$$
in the first few degrees.
Since $\eta$ acts trivially on $\pi_* THH(MU)$, the
$d^2$-differential is given by the $\sigma$-operator,
and
$$
E^3_{*,*} = E^4_{*,*}
	= \bZ[t, t^{-1}] \otimes H(\pi_* THH(MU), \sigma) \,.
$$

Let us first note that after rationalization the spectral sequence
collapses after the $d^2$-differential.

\begin{proposition} \label{prop:rationalTHHMUsigma}
Rationally,
\begin{align*}
\pi_*(MU) \otimes \bQ
	&\cong \bQ[m_n \mid n\ge1] \\
\pi_* THH(MU) \otimes \bQ
	&\cong \bQ[m_n \mid n\ge1] \otimes E(\lambda'_n \mid n\ge1) \\
H(\pi_* THH(MU), \sigma) \otimes \bQ
	&\cong \bQ \,.
\end{align*}
\end{proposition}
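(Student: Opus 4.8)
The plan is to handle the three displayed isomorphisms in turn, treating only the last one as requiring genuine argument. First I would note that the identity $\pi_*(MU)\otimes\bQ \cong \bQ[m_n \mid n\ge1]$ is exactly the statement recorded in the subsection on logarithms: the homomorphism $L \to \bZ[m_n \mid n\ge1]$ underlying \eqref{eq:xn-mn} becomes an isomorphism after rationalization, so the logarithmic coefficients $m_n$ are rational polynomial generators for $L \cong \pi_*(MU)$. Next I would obtain the middle isomorphism by rationalizing the splitting $\pi_* THH(MU) \cong \pi_*(MU)\otimes E(\lambda'_n \mid n\ge1)$ of Proposition~\ref{prop:skelTHHMU} and substituting the first isomorphism; since $\bQ$ is flat over $\bZ$ this is immediate.

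The substance is the computation $H(\pi_* THH(MU), \sigma)\otimes\bQ \cong \bQ$. By Proposition~\ref{prop:sigmamn} the rationalized $\sigma$ is the (right) derivation of $\bQ[m_n]\otimes E(\lambda'_n)$ determined by $\sigma(m_n) = \lambda'_n$, and by Theorem~\ref{thm:sigma-Llambdaprimen} (equivalently, because $\sigma^2 = 0$ rationally) we have $\sigma(\lambda'_n) = 0$. I would then recognize this differential graded $\bQ$-algebra as the tensor product $\bigotimes_{n\ge1} A_n$ of the elementary complexes $A_n = \bQ[m_n]\otimes E(\lambda'_n)$, with $\sigma$ acting on $A_n$ by $\sigma(m_n)=\lambda'_n$ and $\sigma(\lambda'_n)=0$; because $\sigma$ is a derivation, the differential on the tensor product is precisely the tensor-product differential. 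Put another way, $(\bQ[m_n], \lambda'_n, \sigma)$ is the algebraic de Rham complex of the affine space $\operatorname{Spec}\bQ[m_n \mid n\ge1]$, with $\lambda'_n = dm_n$ and $\sigma = d$.

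Then I would compute the homology of a single factor $A_n$: the differential sends $m_n^k \mapsto k\,m_n^{k-1}\lambda'_n$ and $m_n^k\lambda'_n \mapsto 0$, so, working over $\bQ$, the image is the span of all $m_n^k\lambda'_n$ with $k\ge0$, which is exactly the kernel apart from the unit; hence $H(A_n,\sigma) = \bQ\{1\}$. This is just the Poincaré lemma for the affine line in characteristic zero. Finally, since $\bQ$ is a field the Künneth theorem gives $H\bigl(\bigotimes_{\mathrm{fin}} A_n\bigr) \cong \bigotimes_{\mathrm{fin}} H(A_n)$ for each finite subproduct, and passing to the filtered colimit over finite subsets of generators, using that homology commutes with filtered colimits, yields $H(\pi_* THH(MU), \sigma)\otimes\bQ \cong \bigotimes_{n\ge1} \bQ\{1\} = \bQ$.

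I do not expect a serious obstacle. The only points that need care are that the characteristic-zero hypothesis is exactly what makes $\sigma(m_n^k) = k\,m_n^{k-1}\lambda'_n$ hit every $\lambda'_n$-monomial (the integer factor $k$ must be invertible, which fails integrally), and the bookkeeping of the infinite tensor product, which the filtered-colimit argument dispatches. The sign in the right-derivation Leibniz rule does not change the underlying complex up to isomorphism, so the de Rham picture and the direct Künneth computation agree.
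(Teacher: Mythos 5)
Your proposal is correct and takes essentially the same approach as the paper: after rationalizing the identifications $\pi_*(MU)\otimes\bQ \cong \bQ[m_n \mid n\ge1]$ and $\pi_* THH(MU)\otimes\bQ \cong \bQ[m_n]\otimes E(\lambda'_n)$ with $\sigma(m_n)=\lambda'_n$, the paper likewise observes that $H(\bQ[m_n]\otimes E(\lambda'_n),\sigma)=\bQ$ for each $n\ge1$ and concludes by the K{\"u}nneth theorem. Your additional details (the explicit Poincar\'e-lemma computation $m_n^k \mapsto k\,m_n^{k-1}\lambda'_n$ and the filtered-colimit handling of the infinite tensor product) simply make explicit what the paper leaves implicit.
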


\begin{proof}
We know that
$\pi_*(MU) \otimes \bQ \cong L \otimes \bQ \cong \bQ[m_n \mid n\ge1]$
and
$$
\pi_* THH(MU) \otimes \bQ
	\cong \pi_*(MU) \otimes E(\lambda'_n \mid n\ge1) \otimes \bQ
	\cong \bQ[m_n \mid n\ge1] \otimes E(\lambda'_n \mid n\ge1) \,,
$$
with $\sigma(m_n) = \lambda'_n$.  Here $H(\bQ[m_n] \otimes E(\lambda'_n),
\sigma) = \bQ$, for each $n\ge1$, so the near-vanishing of $H(\pi_*
THH(MU) \otimes \bQ, \sigma)$ follows by the K{\"u}nneth theorem.
\end{proof}

Integrally, the situation is more complicated.

\begin{theorem} \label{thm:HTHHMUsigma}
$$
H(\pi_* THH(MU), \sigma) = \begin{cases}
\bZ\{1\} & \text{for $*=0$,} \\
0 & \text{for $*=1,2,4,6,8$,} \\
\bZ/2\{\lambda'_1\} & \text{for $*=3$,} \\
\bZ/4\{x_1 \lambda'_1\} \oplus \bZ/3\{\lambda'_2\} & \text{for $*=5$,} \\
\bZ/4\{\lambda'_3\} \oplus \bZ/3\{2 x_1^2 \lambda'_1\} & \text{for $*=7$,} \\
\bZ/16\{(x_3 - 2 x_1 x_2) \lambda'_1 + x_1 \lambda'_3\} & \\
\qquad {} \oplus \bZ/6\{(x_1^2 - x_2) \lambda'_2\} \oplus \bZ/5\{\lambda'_4\}
	& \text{for $*=9$,} \\
\bZ/2\{\lambda'_1 \lambda'_3\} & \text{for $*=10$.}
\end{cases}
$$
\end{theorem}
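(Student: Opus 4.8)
The plan is to regard $(\pi_* THH(MU), \sigma)$, in the exterior-algebra form of Proposition~\ref{prop:skelTHHMU}, as an explicit finite cochain complex of free abelian groups, and to compute $H(\pi_* THH(MU),\sigma)$ by reducing degree by degree to a Smith normal form calculation. Write $L = \bZ[x_n \mid n\ge1]$ and, for $s\ge0$, let $\Lambda^s \subset E(\lambda'_n \mid n\ge1)$ be the span of the square-free monomials of word-length~$s$, so that $\pi_* THH(MU) = \bigoplus_s L\otimes\Lambda^s$. Since $\sigma(\lambda'_n)=0$ while, by Theorem~\ref{thm:sigma-Llambdaprimen}, $\sigma(x_n)$ is a sum of terms $(\text{monomial in the }x_i)\cdot\lambda'_k$, the right Leibniz rule shows that $\sigma$ is homogeneous of word-length $+1$, i.e.\ $\sigma\colon L\otimes\Lambda^s \to L\otimes\Lambda^{s+1}$. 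Hence $H(\pi_* THH(MU),\sigma)$ is bigraded by total degree and by $s$, and in each total degree $*\le10$ only the finitely many monomials built from $x_1,\dots,x_5$ and $\lambda'_1,\dots,\lambda'_4$ occur. The whole problem thereby becomes finite integer linear algebra.

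I would then invoke the rational collapse of Proposition~\ref{prop:rationalTHHMUsigma} twice. First, since $H(\pi_* THH(MU),\sigma)\otimes\bQ=\bQ$ is concentrated in degree~$0$, every homology group in positive degree is finite torsion, and the cycle and boundary lattices have equal rank there; this determines the free rank of each matrix and serves as a running consistency check. Second, in word-length $s=0$ the differential $\sigma\colon L\to L\otimes\Lambda^1$ becomes, in the logarithmic coordinates $m_n$, the de Rham differential on $\bQ[m_n]$, whose kernel in positive degrees is zero; as $L$ is free this forces $\sigma$ to be \emph{integrally} injective on $L_{>0}$. Consequently the $s=0$ part contributes nothing in positive degrees, so the even degrees $*=2,4,6,8$ reduce to the $s=2$ block and the odd degrees $*=3,5,7,9$ reduce to the $s=1$ block.

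With this organization the bookkeeping is light. In each relevant total degree I would list the monomial basis, record $\sigma$ on each basis element (from the formulas for $\sigma(x_n)$, $n\le4$, of Theorem~\ref{thm:sigma-Llambdaprimen} together with the right Leibniz rule), and read off the homology as the cokernel of the incoming differential restricted to the cycle lattice. For instance, in degree~$5$ the cycles $x_1\lambda'_1$ and $\lambda'_2$ are hit by $\sigma(x_1^2)=-4x_1\lambda'_1$ and $\sigma(x_2)=-4x_1\lambda'_1-3\lambda'_2$, whose image is $4\bZ\{x_1\lambda'_1\}\oplus3\bZ\{\lambda'_2\}$, giving $\bZ/4\oplus\bZ/3$. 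In degree~$10$ the injectivity above kills the polynomial part, so the only cycles are the two word-length-$2$ classes $x_1\lambda'_1\lambda'_2$ and $\lambda'_1\lambda'_3$; computing the image of $\sigma$ from the seven word-length-$1$ monomials in degree~$9$ shows it equals $\bZ\{x_1\lambda'_1\lambda'_2\}\oplus2\bZ\{\lambda'_1\lambda'_3\}$, leaving $\bZ/2\{\lambda'_1\lambda'_3\}$. Note that $\sigma(x_5)$ is never needed, precisely because the $s=0$ part injects.

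The main obstacle is the single degree-$9$, word-length-$1$ computation, which produces the summand $\bZ/16\oplus\bZ/6\oplus\bZ/5$. Here the cycle lattice has rank~$5$ inside the seven word-length-$1$ monomials, and the boundary lattice is the image of $\sigma$ from the five degree-$8$ polynomials $x_4,\,x_1x_3,\,x_2^2,\,x_1^2x_2,\,x_1^4$; the resulting $5\times5$ integer matrix has entries assembled from all of the coefficients of $\sigma(x_1),\dots,\sigma(x_4)$. Extracting its elementary divisors — above all the correct $2$-primary part, namely $16$ rather than a smaller cyclic $2$-group, arising from the interaction of the even coefficients $2,4,8$ occurring in $\sigma(x_1),\sigma(x_2),\sigma(x_4)$ — is the delicate arithmetic step and the place where an error is most likely. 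I would guard against this by checking that the product of the invariant factors equals the index of the boundary lattice in the cycle lattice (here $16\cdot6\cdot5=480$) and that the number of nontrivial factors is consistent with the rational vanishing of $H_9$ supplied by Proposition~\ref{prop:rationalTHHMUsigma}.
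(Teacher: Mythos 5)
Your proposal is correct and takes essentially the same route as the paper's proof: the paper also reduces everything to finite integer linear algebra by splitting $(\pi_* THH(MU), \sigma)$ into subcomplexes (its ``diagonal'' pieces are exactly your word-length bigrading, since $\sigma$ raises both total degree and word-length by one), records the matrices of $\sigma$ coming from Theorem~\ref{thm:sigma-Llambdaprimen} and the right Leibniz rule, compares images and kernels, and invokes the same rational-injectivity observation to dispose of the word-length-zero part in degree~$10$ without ever needing $\sigma(x_5)$. Your spot checks (degrees $5$ and $10$, and the index $480 = 16 \cdot 6 \cdot 5$ for the delicate degree-$9$ block) agree with the paper's matrices, whose Smith normal form in degree~$9$ is $\mathrm{diag}(1,1,1,2,240)$, i.e.\ $\bZ/2 \oplus \bZ/240 \cong \bZ/16 \oplus \bZ/6 \oplus \bZ/5$ as stated.
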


\begin{proof}
Additively,
\begin{multline*}
\pi_* THH(MU) = (\bZ\{1\}, 0, \bZ\{x_1\}, \bZ\{\lambda'_1\},
        \bZ\{x_1^2, x_2\}, \bZ\{x_1 \lambda'_1, \lambda'_2\}, \\
        \bZ\{x_1^3, x_1 x_2, x_3\},
        \bZ\{x_1^2 \lambda'_1, x_2 \lambda'_1, x_1 \lambda'_2, \lambda'_3\},
        \bZ\{\lambda'_1 \lambda'_2, \dots \}, \dots) \,.
\end{multline*}
As a cochain complex with differential given by the $\sigma$-operator,
this breaks up as a direct sum of the shorter complexes
\begin{align*}
\bZ\{1\} \\
\bZ\{x_1\} &\overset{
  \begin{pmatrix} -2 \end{pmatrix}
  }\longto \bZ\{\lambda'_1\} \\
\bZ\{x_1^2, x_2\} &\overset{
  \begin{pmatrix} -4 & -4 \\ 0 & -3 \end{pmatrix}
  }\longto \bZ\{x_1 \lambda'_1, \lambda'_2\}
\end{align*}
$$
\bZ\{x_1^3, x_1 x_2, x_3\} \overset{
  \begin{pmatrix}
	-6 & -4 & -5 \\
	0 & -2 & -4 \\
	0 & -3 & -6 \\
	0 & 0 & -2
  \end{pmatrix}
  }\longto
\bZ\{x_1^2 \lambda'_1, x_2 \lambda'_1, x_1 \lambda'_2, \lambda'_3\}
  \overset{
  \begin{pmatrix} 0 & -3 & 2 & 0 \end{pmatrix}
  }\longto
\bZ\{\lambda'_1 \lambda'_2\}
$$
and
\begin{multline*}
\bZ\{x_1^4, x_1^2 x_2, x_1 x_3, x_2^2, x_4\} \overset{
  \begin{pmatrix}
	-8 & -4 & -5 & 0 & 0 \\
	0 & -4 & -4 & -8 & 4 \\
	0 & 0 & -2 & 0 & -8 \\
	0 & -3 & -6 & 0 & -3 \\
	0 & 0 & 0 & -6 & -6 \\
	0 & 0 & -2 & 0 & -8 \\
	0 & 0 & 0 & 0 & -5
  \end{pmatrix}
  }\longto \\
\bZ\{x_1^3 \lambda'_1, x_1 x_2 \lambda'_1, x_3 \lambda'_1, x_1^2 \lambda'_2, x_2 \lambda'_2, x_1 \lambda'_3, \lambda'_4\}
  \overset{
  \begin{pmatrix}
	0 & -3 & -6 & 4 & 4 & 0 & 0 \\
	0 & 0 & -2 & 0 & 0 & 2 & 0
  \end{pmatrix}
  }\longto \\
\bZ\{x_1 \lambda'_1 \lambda'_2, \lambda'_1 \lambda'_3\} \,.
\end{multline*}
By rational considerations, $\sigma$ acts injectively on the remaining
summand
$$
\bZ\{x_1^5, x_1^3 x_2, \dots, x_2 x_3, x_5\}
$$
of $\pi_{10} THH(MU)$.  The result then follows by comparing images and
kernels in these complexes.
\end{proof}

Here is the same calculation in absolute coordinates.

\begin{theorem}
$$
H(\pi_* THH(MU), \sigma) = \begin{cases}
\bZ\{1\} & \text{for $*=0$,} \\
0 & \text{for $*=1,2,4,6,8$,} \\
\bZ/2\{e_1\} & \text{for $*=3$,} \\
\bZ/12\{e_2\} & \text{for $*=5$,} \\
\bZ/12\{e'_3\} & \text{for $*=7$,} \\
\bZ/240\{e'_4\} \oplus \bZ/2\{e''_4\} & \text{for $*=9$,} \\
\bZ/2\{e_1 e_3\} & \text{for $*=10$,}
\end{cases}
$$
where
\begin{align*}
e'_3 &= e_3 + 2 x_1 e_2 + x_2 e_1 \\
e'_4 &= e_4 - x_1^2 e_2 - x_3 e_1 \\
e''_4 &= x_1 x_2 e_1 + 3 x_2 e_2 \,.
\end{align*}
\end{theorem}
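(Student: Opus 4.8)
The plan is to run the same computation as in the proof of Theorem~\ref{thm:HTHHMUsigma}, but retaining the absolute-coordinate generators $e_n$ in place of the $\lambda'_n$. First I would record the additive basis of $\pi_* THH(MU) = L \otimes E(e_n \mid n\ge1)$ in degrees $\le 10$ and evaluate $\sigma$ on each basis monomial. Since $\sigma$ is a right derivation, this is determined by Theorem~\ref{thm:sigma-Len}: the stated values of $\sigma(x_1), \dots, \sigma(x_4)$ together with $\sigma(e_1) = \sigma(e_2) = 0$, $\sigma(e_3) = e_1 e_2$, $\sigma(e_4) = 2 e_1 e_3$, and the Leibniz rule $\sigma(xy) = x\sigma(y) + (-1)^{|y|}\sigma(x)y$.

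As before, the cochain complex $(\pi_* THH(MU), \sigma)$ splits degreewise into short complexes spanned by monomials. For $* \le 10$ the relevant summands are $\bZ\{1\}$, the two-term complexes $\bZ\{x_1\} \to \bZ\{e_1\}$ and $\bZ\{x_1^2, x_2\} \to \bZ\{x_1 e_1, e_2\}$, the three-term complex $\bZ\{x_1^3, x_1 x_2, x_3\} \to \bZ\{x_1^2 e_1, x_2 e_1, x_1 e_2, e_3\} \to \bZ\{e_1 e_2\}$, the three-term complex $\bZ\{x_1^4, x_1^2 x_2, x_1 x_3, x_2^2, x_4\} \to \bZ\{x_1^3 e_1, x_1 x_2 e_1, x_3 e_1, x_1^2 e_2, x_2 e_2, x_1 e_3, e_4\} \to \bZ\{x_1 e_1 e_2, e_1 e_3\}$, and the summand $\bZ\{x_1^5, \dots, x_5\}$ of $\pi_{10} THH(MU)$ on which $\sigma$ is rationally injective. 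The one novelty compared with the $\lambda'_n$-case is that the right-hand terms, spanned by the products $e_1 e_2$, $x_1 e_1 e_2$ and $e_1 e_3$, are now hit nontrivially, exactly because $\sigma(e_3)$ and $\sigma(e_4)$ are nonzero; these extra differentials are what I must keep track of.

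I would then extract the homology of each summand by Smith normal form. The small pieces are immediate: $\sigma(x_1) = 2 e_1$ gives $\bZ/2\{e_1\}$ in degree $3$; the matrix $\bigl(\begin{smallmatrix} 4 & 1 \\ 0 & 3 \end{smallmatrix}\bigr)$ gives $\bZ/12\{e_2\}$ in degree $5$; the three-term complex ending in $e_1 e_2$ has trivial homology in degrees $6$ and $8$ and yields $\bZ/12\{e'_3\}$ in degree $7$, once one checks that $e'_3 = e_3 + 2 x_1 e_2 + x_2 e_1$ is a cycle (indeed $\sigma(e'_3) = e_1 e_2 - 4 e_1 e_2 + 3 e_1 e_2 = 0$) representing a generator. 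For the last complex, a direct calculation shows the map into $\bZ\{x_1 e_1 e_2, e_1 e_3\}$ has image $\bZ\{x_1 e_1 e_2\} \oplus 2\bZ\{e_1 e_3\}$, so that $H_{10} = \bZ/2\{e_1 e_3\}$.

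The main obstacle is degree $9$, where $H_9 = \ker(\sigma)/\im(\sigma)$ is the rank-$5$ kernel of $\sigma \colon \pi_9 THH(MU) \to \pi_{10} THH(MU)$ (cut out by the two relations imposed by landing in $\bZ\{x_1 e_1 e_2, e_1 e_3\}$) modulo the image of the $7 \times 5$ integer matrix recording $\sigma$ on $\{x_1^4, x_1^2 x_2, x_1 x_3, x_2^2, x_4\}$. Here I must show not only that the abstract group is $\bZ/240 \oplus \bZ/2$, but that $e'_4 = e_4 - x_1^2 e_2 - x_3 e_1$ and $e''_4 = x_1 x_2 e_1 + 3 x_2 e_2$ represent the two invariant-factor generators; I would first verify that both are cycles, and then diagonalize to read off that $e'_4$ has order $240$ and $e''_4$ has order $2$. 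As a consistency check, the coordinate-independence of $H(\pi_* THH(MU), \sigma)$ forces agreement with Theorem~\ref{thm:HTHHMUsigma} under the isomorphisms $\bZ/4 \oplus \bZ/3 \cong \bZ/12$ in degrees $5, 7$ and $\bZ/16 \oplus \bZ/6 \oplus \bZ/5 \cong \bZ/240 \oplus \bZ/2$ in degree $9$; the change of basis $\lambda'_n \leftrightarrow e_n$ recorded after Proposition~\ref{prop:skelTHHMUsplit} then matches the generators, for example $x_1 \lambda'_1 + \lambda'_2 = -e_2$ recovers the degree-$5$ generator.
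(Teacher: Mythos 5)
Your proposal is correct and follows essentially the same route as the paper's proof: the paper likewise splits the cochain complex $(\pi_* THH(MU),\sigma)$ into the same monomial summands, records the $\sigma$-matrices coming from Theorem~\ref{thm:sigma-Len} and the right Leibniz rule, and reads off the homology ``by comparing images and kernels,'' i.e.\ exactly the Smith normal form computation you describe. Your verifications that $e'_3$, $e'_4$, $e''_4$ are cycles and your cross-check against Theorem~\ref{thm:HTHHMUsigma} via $x_1\lambda'_1+\lambda'_2=-e_2$ are sound supplements but do not change the method.
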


\begin{proof}
Additively,
\begin{multline*}
\pi_* THH(MU) = (\bZ\{1\}, 0, \bZ\{x_1\}, \bZ\{e_1\},
        \bZ\{x_1^2, x_2\}, \bZ\{x_1 e_1, e_2\}, \\
        \bZ\{x_1^3, x_1 x_2, x_3\},
        \bZ\{x_1^2 e_1, x_2 e_1, x_1 e_2, e_3\},
        \bZ\{e_1 e_2, \dots \}, \dots) \,.
\end{multline*}
The cochain complex $(\pi_* THH(MU), \sigma)$ breaks up as the direct
sum of the complexes
\begin{align*}
\bZ\{1\} \\
\bZ\{x_1\} &\overset{
  \begin{pmatrix} 2 \end{pmatrix}
  }\longto \bZ\{e_1\} \\
\bZ\{x_1^2, x_2\} &\overset{
  \begin{pmatrix} 4 & 1 \\ 0 & 3 \end{pmatrix}
  }\longto \bZ\{x_1 e_1, e_2\} \\
\bZ\{x_1^3, x_1 x_2, x_3\} &\overset{
  \begin{pmatrix} 6 & 1 & 1 \\ 0 & 2 & 2 \\ 0 & 3 & 4 \\ 0 & 0 & 2 \end{pmatrix}
  }\longto
\bZ\{x_1^2 e_1, x_2 e_1, x_1 e_2, e_3\}
  \overset{
  \begin{pmatrix} 0 & 3 & -2 & 1 \end{pmatrix}
  }\longto
\bZ\{e_1 e_2\}
\end{align*}
and
\begin{multline*}
\bZ\{x_1^4, x_1^2 x_2, x_1 x_3, x_2^2, x_4\} \overset{
  \begin{pmatrix}
	8 & 1 & 1 & 0 & 0 \\
	0 & 4 & 2 & 2 & 2 \\
	0 & 0 & 2 & 0 & -2 \\
	0 & 3 & 4 & 0 & 0 \\
	0 & 0 & 0 & 6 & 1 \\
	0 & 0 & 2 & 0 & 3 \\
	0 & 0 & 0 & 0 & 5
  \end{pmatrix}
  }\longto \\
\bZ\{x_1^3 e_1, x_1 x_2 e_1, x_3 e_1, x_1^2 e_2, x_2 e_2, x_1 e_3, e_4\}
  \overset{
  \begin{pmatrix}
	0 & 3 & 4 & -4 & -1 & 1 & 0 \\
	0 & 0 & 2 & 0 & 0 & -2 & 2
  \end{pmatrix}
  }\longto \\
\bZ\{x_1 e_1 e_2, e_1 e_3\} \,.
\end{multline*}
The result then follows by comparing images and kernels.
\end{proof}

\begin{remark}
Ignoring decomposables, one might have expected that the $\sigma$-operator
acts on $\pi_* THH(MU)$ by $\sigma(x_n) = d_n e_n$, where $d_n = p$
if $n+1$ is a power of a prime~$p$ and $d_n = 1$ otherwise, and that
$\sigma(e_n) = 0$.  This would alter the group structure of $H(THH(MU),
\sigma)$ in degree~$7$ to $\bZ/2\{e_3\} \oplus \bZ/6\{x_1^2 e_1\}$,
and in degree~$9$ to $\bZ/120 \oplus (\bZ/2)^2$, and is therefore not
a permissible simplification.
Any expectation that $H(\pi_* THH(MU), \sigma)$ might be trivial
in all positive even degrees, or cyclic in all positive odd degrees, is
also dispelled by these calculations.
\end{remark}

We can also calculate the $d^2$-differential and $E^3 = E^4$-term of
the circle Tate spectral sequence
$$
E^2_{*,*} = \bZ[t, t^{-1}] \otimes \pi_* THH(BP)
	\Longrightarrow \pi_* THH(BP)^{tS^1} \,,
$$
in the first few degrees.  Since $\eta$ acts trivially on $\pi_* THH(BP)$,
the $d^2$-differential is given by the $\sigma$-operator, and
$$
E^3_{*,*} = E^4_{*,*}
	= \bZ[t, t^{-1}] \otimes H(\pi_* THH(BP), \sigma) \,.
$$
Rationally, this spectral sequence collapses after the
$d^2$-differential.

\begin{proposition} \label{prop:rationalTHHBPsigma}
\begin{align*}
\pi_*(BP) \otimes \bQ
	&\cong \bQ[\ell_n \mid n\ge1] \\
\pi_* THH(BP) \otimes \bQ
	&\cong \bQ[\ell_n \mid n\ge1] \otimes E(\lambda_n \mid n\ge1) \\
H(\pi_* THH(BP), \sigma) \otimes \bQ
	&\cong \bQ \,.
\end{align*}
\end{proposition}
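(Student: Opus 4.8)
The plan is to transcribe the argument of Proposition~\ref{prop:rationalTHHMUsigma} to the $p$-typical setting, substituting the Hopf algebroid $(V, VT)$ for $(L, LB)$ and the logarithmic coefficients $\ell_n$ for the $m_n$. For the first isomorphism I would use that $V \otimes \bQ \to \bQ[\ell_n \mid n\ge1]$ is an isomorphism (as recorded after the construction of the Quillen idempotent) together with Quillen's isomorphism $V \cong \pi_*(BP)$; rationalizing yields $\pi_*(BP) \otimes \bQ \cong \bQ[\ell_n \mid n\ge1]$.

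For the second isomorphism I would simply rationalize the splitting $\pi_* THH(BP) \cong \pi_*(BP) \otimes E(\lambda_n \mid n\ge1)$ of Proposition~\ref{prop:skelTHHBP}, which gives $\pi_* THH(BP) \otimes \bQ \cong \bQ[\ell_n \mid n\ge1] \otimes E(\lambda_n \mid n\ge1)$.

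For the third, the key input is that rationally $\sigma$ is the (right) derivation determined by $\sigma(\ell_n) = \lambda_n$, from Proposition~\ref{prop:sigmaln}, and $\sigma(\lambda_n) = 0$, from Theorem~\ref{thm:sigma-Vlambdan}. These formulas exhibit $(\pi_* THH(BP) \otimes \bQ, \sigma)$ as the tensor product over $n \ge 1$ of the two-term complexes with $\bQ[\ell_n]$ in exterior degree $0$ and $\bQ[\ell_n]\{\lambda_n\}$ in exterior degree $1$, with differential $f(\ell_n) \mapsto f'(\ell_n) \lambda_n$. Over the field $\bQ$ the formal derivative $\bQ[\ell_n] \to \bQ[\ell_n]$ is surjective with kernel the constants, so each such complex has homology $\bQ$ concentrated in degree~$0$. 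Applying the K{\"u}nneth theorem over $\bQ$ factor by factor then gives $H(\pi_* THH(BP), \sigma) \otimes \bQ \cong \bigotimes_{n\ge1} \bQ \cong \bQ$.

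I do not expect a serious obstacle: the argument is a formal copy of the $MU$ case. The only point deserving a moment's care is that homology commutes with the (infinite) tensor product; this is legitimate precisely because we work over the field $\bQ$, so there are no $\Tor$ contributions and, since in each total degree only finitely many factors contribute beyond their degree-zero part, the K{\"u}nneth isomorphism applies without convergence issues.
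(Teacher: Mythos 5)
Your proposal is correct and is essentially the paper's own argument: the paper simply states that the proof is the same as for Proposition~\ref{prop:rationalTHHMUsigma}, namely rationalize the identifications $\pi_*(BP) \otimes \bQ \cong \bQ[\ell_n \mid n\ge1]$ and $\pi_* THH(BP) \otimes \bQ \cong \bQ[\ell_n \mid n\ge1] \otimes E(\lambda_n \mid n\ge1)$, use $\sigma(\ell_n) = \lambda_n$ from Proposition~\ref{prop:sigmaln}, and apply the K{\"u}nneth theorem factor by factor. Your extra remark justifying the K{\"u}nneth step for the infinite tensor product over $\bQ$ is a harmless (and correct) elaboration of a point the paper leaves implicit.
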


The proof is the same as for Proposition~\ref{prop:rationalTHHMUsigma}.

\begin{theorem} \label{thm:HTHHBPsigma}
\begin{multline*}
H(\pi_* THH(BP), \sigma) \\
= \begin{cases}
\bZ_{(p)}\{1\} & \text{for $*=0$,} \\
\bZ/p\{v_1^{i-1} \lambda_1\}
	& \text{for $* = i(2p-2)+1$, $1 \le i \le p-1$,} \\
\bZ/p^2\{v_1^{p-1} \lambda_1\} & \text{for $* = 2p^2 - 2p + 1$,} \\
\bZ/p^2\{\lambda_2\} & \text{for $* = 2p^2 - 1$,} \\
\bZ_{(p)}/p^2(p+2) \{v_2 \lambda_1 + v_1 \lambda_2\}
	& \text{for $* = 2p^2 + 2p - 3$,} \\
\bZ/p\{\lambda_1 \lambda_2\} & \text{for $* = 2p^2 + 2p - 2$,} \\
0 & \text{for the remaining $* \le 2p^2 + 4p - 6$.}
\end{cases}
\end{multline*}
The group in degree~$* = 2p^2 + 2p - 3$ is $\bZ/p^2$ for $p$ odd, and
$\bZ/16$ for $p=2$.
\end{theorem}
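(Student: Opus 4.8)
The plan is to analyze the cochain complex $(\pi_* THH(BP), \sigma)$ degree by degree throughout the range $* \le 2p^2 + 4p - 6$, exploiting that only finitely many algebra generators contribute there. Since $|v_n| = 2(p^n-1)$ and $|\lambda_n| = 2p^n-1$ both exceed $2p^2+4p-6$ as soon as $n \ge 3$, every monomial in this range is a product of $v_1, v_2, \lambda_1, \lambda_2$; moreover $|v_2^2| = 4p^2-4$ and $|v_2\lambda_2| = 4p^2-3$ are out of range, so $v_2$ occurs at most linearly and never alongside $\lambda_2$. Thus in the relevant degrees the complex is the free $\bZ_{(p)}$-module on the monomials $v_1^a v_2^b \lambda_1^{\epsilon_1}\lambda_2^{\epsilon_2}$ with $b,\epsilon_1,\epsilon_2 \in \{0,1\}$ and $b\epsilon_2 = 0$, with differential determined by the derivation property together with $\sigma(v_1) = p\lambda_1$, $\sigma(v_2) = p\lambda_2 - (p+1)v_1^p\lambda_1$, and $\sigma(\lambda_1) = \sigma(\lambda_2) = 0$ from Theorem~\ref{thm:sigma-Vlambdan}. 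By Proposition~\ref{prop:rationalTHHBPsigma} all positive-degree homology is torsion, which serves as a running consistency check.

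First I would treat the sub-DGA $A = \bZ_{(p)}[v_1]\otimes E(\lambda_1)$, on which $\sigma(v_1^i) = ip\,v_1^{i-1}\lambda_1$ and $\sigma(v_1^i\lambda_1) = 0$. This is a Koszul-type complex with homology $\bZ_{(p)}\{1\}$ in degree $0$ and $\bZ_{(p)}/(ip)\{[v_1^{i-1}\lambda_1]\}$ in degree $i(2p-2)+1$ for $i \ge 1$, the latter being $\bZ/p$ when $p \nmid i$ and $\bZ/p^2$ when $i = p$. Since $A$ is a subcomplex and every monomial of degree below $|v_2| = 2p^2-2$ lies in $A$, one has $H(\pi_* THH(BP), \sigma) = H(A)$ in that range, which already produces the rows $\bZ_{(p)}\{1\}$, $\bZ/p\{v_1^{i-1}\lambda_1\}$ for $1 \le i \le p-1$, and $\bZ/p^2\{v_1^{p-1}\lambda_1\}$ in degree $2p^2-2p+1$. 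The remaining $A$-classes in degrees $\ge 2p^2-1$ (namely $i = p+1, p+2$) survive only up to their interaction with $v_2$ and $\lambda_2$, treated next.

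For the window $2p^2-2 \le * \le 2p^2+4p-6$ I would list the few monomials in each degree and compute the Smith normal form of the resulting integer matrices, using $\sigma(v_2) = p\lambda_2 - (p+1)v_1^p\lambda_1$ together with $\sigma(v_1 v_2) = p\,v_2\lambda_1 - (p+1)v_1^{p+1}\lambda_1 + p\,v_1\lambda_2$ and $\sigma(v_2\lambda_1) = p\lambda_1\lambda_2$. The pure $v$-monomials are never $\sigma$-cycles, and $\lambda_1\lambda_2$ is the only even cycle, so the homology vanishes throughout the window apart from three degrees. In degree $2p^2-1$ the map $\bZ_{(p)}\{v_2, v_1^{p+1}\} \to \bZ_{(p)}\{v_1^p\lambda_1, \lambda_2\}$ has determinant $p^2(p+1)$ and entry-gcd $1$, hence cyclic cokernel $\bZ_{(p)}/p^2(p+1) = \bZ/p^2$ on $\lambda_2$ (as $p+1$ is a unit in $\bZ_{(p)}$). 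In degree $2p^2+2p-2$ the cycle $\lambda_1\lambda_2$ is hit exactly by $p\lambda_1\lambda_2 = \sigma(v_2\lambda_1)$, giving $\bZ/p\{\lambda_1\lambda_2\}$.

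The main obstacle is degree $2p^2+2p-3$, spanned by $v_2\lambda_1, v_1^{p+1}\lambda_1, v_1\lambda_2$, with incoming differentials from $v_1 v_2$ and $v_1^{p+2}$ and outgoing differential to $\lambda_1\lambda_2$. The outgoing map is $(x,y,z) \mapsto p(x-z)\lambda_1\lambda_2$, so the kernel is the rank-two module on $w_1 = v_2\lambda_1 + v_1\lambda_2$ and $w_2 = v_1^{p+1}\lambda_1$; on this basis the image is generated by $\sigma(v_1 v_2) = p w_1 - (p+1)w_2$ and $\sigma(v_1^{p+2}) = p(p+2)w_2$. This $2\times 2$ matrix has determinant $p^2(p+2)$ and entry-gcd $1$, so the homology is cyclic of order $p^2(p+2)$, generated by $v_2\lambda_1 + v_1\lambda_2$; the point is that this is the non-split extension and not $(\bZ/p)^2$. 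Localizing, $p+2$ is a unit for odd $p$ and the group is $\bZ/p^2$, whereas for $p=2$ one has $p+2 = 4$ and the group is $\bZ/16$. For $p=2$ I would also verify directly that the small degree $|v_1| = 2$, which introduces extra monomials such as $v_1^5$ and $v_1^2 v_2$ into the neighboring degree $2p^2+2p-2 = 10$, does not perturb these computations (one checks they remain non-cycles mapping away from $\lambda_1\lambda_2$). Finally I would confirm vanishing in the remaining degrees $* > 2p^2+2p-2$ by the same bookkeeping, using that the next potential $H(A)$- and $\lambda_2$-contributions occur just outside the range, completing the stated list.
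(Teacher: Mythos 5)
Your proposal is correct and follows essentially the same route as the paper: both use the formulas $\sigma(v_1)=p\lambda_1$, $\sigma(v_2)=p\lambda_2-(p+1)v_1^p\lambda_1$, $\sigma(\lambda_n)=0$ from Theorem~\ref{thm:sigma-Vlambdan} to break $(\pi_* THH(BP),\sigma)$ in the given range into small finite complexes of free $\bZ_{(p)}$-modules (the paper lists exactly your matrices, e.g.\ the two-term complex on $v_1^{p+1},v_2$ and the three-term complex ending in $\lambda_1\lambda_2$) and then does elementary integer linear algebra. Your write-up merely makes explicit the Smith-normal-form and generator bookkeeping that the paper dismisses as ``elementary,'' including the cyclic extension $\bZ_{(p)}/p^2(p+2)$ in degree $2p^2+2p-3$ and the extra $p=2$ monomials in degree $10$.
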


\begin{proof}
The cochain complex $(\pi_* THH(BP), \sigma)$ is the direct sum of
a sequence of smaller complexes, which begin with
\begin{align*}
\bZ_{(p)}\{1\} &\\
\bZ_{(p)}\{v_1\} &\overset{ \begin{pmatrix} p \end{pmatrix} }\longto
	\bZ_{(p)}\{\lambda_1\} \\
\bZ_{(p)}\{v_1^2\} &\overset{ \begin{pmatrix} 2p \end{pmatrix} }\longto
	\bZ_{(p)}\{v_1 \lambda_1\} \\
&\vdots \\
\bZ_{(p)}\{v_1^p\} &\overset{ \begin{pmatrix} p^2 \end{pmatrix} }\longto
	\bZ_{(p)}\{v_1^{p-1} \lambda_1\} \\
\bZ_{(p)}\{v_1^{p+1}, v_2\} &\overset{
	\begin{pmatrix} p(p+1) & -(p+1) \\ 0 & p \end{pmatrix} }\longto
	\bZ_{(p)}\{v_1^p \lambda_1, \lambda_2\}
\end{align*}
and
$$
\bZ_{(p)}\{v_1^{p+2}, v_1 v_2\} \overset{
	\begin{pmatrix} p(p+2) & -(p+1) \\ 0 & p \\ 0 & p \end{pmatrix} }\longto
	\bZ_{(p)}\{v_1^{p+1} \lambda_1, v_2 \lambda_1, v_1 \lambda_2\} \overset{
\begin{pmatrix} 0 & p & -p \end{pmatrix} }\longto
	\bZ_{(p)}\{\lambda_1 \lambda_2\} \,.
$$
It is elementary to calculate the cohomology of these complexes.
\end{proof}

\section{Algebraic de Rham cohomology}

For any ring $R$ there is a linearization map $\pi_* THH(R) \to HH_*(R)$
to Hochschild homology, which is a rational isomorphism.  If $R$ is
commutative, then there is also a multiplicative homomorphism $\Omega^*_R
\to HH_*(R)$ from the algebra of de Rham forms to Hochschild homology,
which by the Hochschild--Kostant--Rosenberg theorem \cite{HKR62} is
an isomorphism when $R$ is smooth.  The $\sigma$-operator on $\pi_*
THH(R)$ is compatible with the Connes $B$-operator acting on $HH_*(R)$
and the exterior differential $d$ acting on $\Omega^*_R$, as proved by
Loday--Quillen \cite{LQ84}*{Prop.~2.2}.  Hence the linearization map
from the Tate spectral sequence~\eqref{eq:TPspseq} for $THH(R)$
to the corresponding spectral sequence
\begin{equation} \label{eq:HPspseq}
E^2_{*,*} = \bZ[t, t^{-1}] \otimes HH_*(R)
        \Longrightarrow HP_*(R) \,,
\end{equation}
converging to the periodic cyclic homology $HP_*(R)$, becomes an
isomorphism after rationalization.  In particular, the map of
$E^3$-terms
$$
\bZ[t, t^{-1}] \otimes H(\pi_* THH(R), \sigma)
	\longto \bZ[t, t^{-1}] \otimes H(HH_*(R), B)
$$
is a rational isomorphism.  Furthermore, the induced homomorphism
$$
H_{dR}^*(R) = H(\Omega^*_R, d)
	\longto H(HH_*(R), B)
$$
from the algebraic de Rham cohomology of~$R$ is an isomorphism for $R$
smooth.  It is known \cite{LQ84}*{Thm.~2.9} that after rationalization
the spectral sequence~\eqref{eq:HPspseq} collapses after the
$d^2$-differentials, so that $E^3 \otimes \bQ = E^\infty \otimes \bQ$.

In view of these classical results, it would be interesting to obtain a
more intrinsic algebraic description of the $E^3$-terms $\bZ[t, t^{-1}]
\otimes H(\pi_* THH(MU), \sigma)$ and $\bZ[t, t^{-1}] \otimes H(\pi_*
THH(BP), \sigma)$ of the Tate spectral sequences~\eqref{eq:TPspseq}
converging to $\pi_* THH(MU)^{tS^1}$ and $\pi_* THH(BP)^{tS^1}$,
respectively, than those offered in Theorems~\ref{thm:HTHHMUsigma}
and~\ref{thm:HTHHBPsigma}.
As first approximations to such descriptions we observe below that
there are natural homomorphisms
$$
H^*_{dR}(\pi_*(MU)) \longto
	H(\pi_* THH(MU), \sigma) \longto H^*_{dR}(H_*(MU))
$$
and
$$
H^*_{dR}(\pi_*(BP)) \longto
	H(\pi_* THH(BP), \sigma) \longto H^*_{dR}(H_*(BP)) \,,
$$
relating the de Rham cohomology of the graded commutative rings
$\pi_*(MU) \cong L$, $H_*(MU) \cong C$, $\pi_*(BP) \cong V$ and
$H_*(BP) \cong T$ to the $E^3$-terms of interest.  These are rational
isomorphisms, in a trivial way, but fail to be integral isomorphisms.
Finally, we observe that the Tate spectral sequences~\eqref{eq:TPspseq}
for $MU$ and~$BP$ do not collapse after the $d^2$-differential, due
to the presence of nonzero $d^4$-differentials for $THH(MU)^{tS^1}$
and nonzero $d^{2p}$-differentials for $THH(BP)^{tS^1}$.

\subsection{The Hurewicz homomorphism}
Let $HR$ denote the Eilenberg--Mac\,Lane ring spectrum of a ring~$R$.
There is a unique map $MU \to H\bZ$ of $E_\infty$ ring spectra, and a
unique map $BP \to H\bZ_{(p)}$ of $E_4$ ring spectra.  The following
two lemmas are well-known.

\begin{lemma} \label{lem:HurewiczMU}
There is a commutative diagram of graded commutative rings
$$
\xymatrix{
& B \ar[d]_-{\iota} \ar[dr]^-{\cong} \\
L \ar[r]^-{\eta_R} \ar[d] \ar@(dr,dl)[rr]^-{h}
	& LB \ar[r] & H_*(MU) \ar[d] \\
L \otimes \bQ \ar[rr]^-{h \otimes \bQ}_-{\cong} && H_*(MU) \otimes \bQ
}
$$
where $LB \to H_*(MU)$ is the surjective homomorphism
$$
\pi_*(MU \wedge MU) \longto \pi_*(H\bZ \wedge MU) = H_*(MU)
$$
induced by $MU \to H\bZ$.  The composition $h \: \pi_*(MU) = L \to
H_*(MU)$ is the Hurewicz homomorphism, and $h \otimes \bQ$ sends $m_n \in
L \otimes \bQ$ to the image of $\bar b_n \in B$ in $H_*(MU) \otimes \bQ$.
There is a similar commutative diagram with~$C$ and~$LC$ in place of~$B$
and~$LB$, where $h \otimes \bQ$ sends $m_n$ to the image of $c_n \in C$.
\end{lemma}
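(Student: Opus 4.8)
The plan is to identify the ring homomorphism $LB \to H_*(MU)$ explicitly on each of the two tensor factors of $LB = L \otimes B$, and then to read off all three commutativities and the rational formula from this single identification together with the right-unit formulas of the previous sections. The whole lemma then reduces to the observation that $LB \to H_*(MU)$ is the augmentation on the $L$-factor tensored with the isomorphism on the $B$-factor.

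First I would pin down the two composites out of $L$ at the spectrum level. Writing $u \colon S \to MU$ for the unit and $f \colon MU \to H\bZ$ for the Thom map, the generator $LB \to H_*(MU)$ is induced by $f \wedge 1_{MU}$. Since $\eta_L$ is induced by $1_{MU} \wedge u$ and $\eta_R$ by $u \wedge 1_{MU}$, the composite along $\eta_R$ is induced by $(f \wedge 1)(u \wedge 1) = (fu) \wedge 1_{MU}$; as $f$ is a map of ring spectra, $fu \colon S \to H\bZ$ is the unit, so this composite is exactly the Hurewicz map $h \colon \pi_*(MU) \to \pi_*(H\bZ \wedge MU) = H_*(MU)$. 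On the other hand, the composite along $\eta_L$ is induced by $f \wedge u$, which factors through $\pi_*(H\bZ) = \bZ$; hence it is the augmentation of $L$, sending the positive-degree ideal to zero and $1$ to $1$. Equivalently, this records that $f$ carries the formal group law $F$ over $\pi_*(MU)$ to the additive one over $H\bZ$, so every $\eta_L(a_{ij})$ dies.

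Next I would invoke the classical computation of $H_*(MU)$: by the Thom isomorphism and $H_*(BU) \cong \bZ[\beta_n \mid n \ge 1]$ one has $H_*(MU) \cong \bZ[b_n \mid n \ge 1]$, and Adams' classes $b_n \in \pi_*(MU \wedge MU)$ are arranged, via $H_*(\bC P^\infty)$, precisely so that $\iota \colon B \to LB$ followed by $LB \to H_*(MU)$ carries $b_n$ to the standard polynomial generator of $H_*(MU)$. This supplies the diagonal isomorphism $B \xrightarrow{\cong} H_*(MU)$, makes the upper triangle commute by inspection, and shows $LB \to H_*(MU)$ is surjective. Combined with the previous paragraph, the ring map $LB = L \otimes B \to H_*(MU)$ is identified as augmentation on $L$ tensored with the isomorphism on $B$, which in particular gives commutativity of the square containing $\eta_R$ and $h$.

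It remains to compute $h \otimes \bQ$ on the logarithm coefficients, and here the only real step is to push the right-unit formulas through this identification. Since $h(m_n)$ is the image of $\eta_R(m_n)$ under $LB \to H_*(MU)$, I would apply the map to~\eqref{eq:etaRmn-in-LB}: the factors $m_i$, included via $\eta_L$, go to $0$ for $i \ge 1$ and to $1$ for $i = 0$, so the sum $\sum_i m_i (\sum_j \bar b_j)^{i+1}$ collapses to the single term $\sum_j \bar b_j$ coming from $i=0$. Extracting the degree-$2n$ part shows $h \otimes \bQ$ sends $m_n$ to the image of $\bar b_n$; since the $m_n$ and the images of the $\bar b_n$ are polynomial generators of the two rational polynomial rings, $h \otimes \bQ$ is an isomorphism. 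The $C$-version is identical, now applying $LC \to H_*(MU)$ to Proposition~\ref{prop:etaR-on-mn}: the augmentation kills every $m_i$ with $i \ge 1$, leaving only the index $(i,j)=(0,n)$ and hence the image of $c_n$. The main obstacle is really just the bookkeeping in the first identification---getting the two factors of $LB$ straight and confirming that $\eta_L$ composes to the augmentation; once that is secured, the rational formula is a one-line collapse of the right-unit sum.
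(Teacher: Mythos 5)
Your proposal is correct and follows essentially the same route as the paper's proof: both factor the Hurewicz map as $\eta_R$ followed by the map $LB \to H_*(MU)$ induced by $MU \to H\bZ$, identify that map as killing $\eta_L(L^{>0})$ while restricting to an isomorphism on $B$ (resp.\ $C$), and then collapse the right-unit sums \eqref{eq:etaRmn-in-LB} and Proposition~\ref{prop:etaR-on-mn} to the single terms $\bar b_n$ and $c_n$. The only real difference is how the middle identification is justified: the paper uses flatness of $LB$ over $L$ (via $\eta_L$) to get $H_*(MU) \cong \bZ \otimes_L LB$ by base change, making $B \cong \bZ \otimes_L LB$ purely algebraic, whereas you quote the classical Thom-isomorphism computation of $H_*(MU)$ together with naturality of Adams' classes $b_n$ in the coefficient spectrum --- both justifications are valid.
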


\begin{proof}
The Hurewicz homomorphism $h \: \pi_*(MU) \to H_*(MU)$
is induced by the composition
$$
MU \cong S \wedge MU \longto MU \wedge MU \longto H\bZ \wedge MU \,.
$$
The first map induces the right unit $\eta_R \: L \to LB$.  The second
map induces the surjective homomorphism
$$
LB \longto \bZ \otimes_L LB
	\cong \pi_*(H\bZ \wedge_{MU} (MU \wedge MU))
	\cong \pi_*(H\bZ \wedge MU) \,,
$$
where we use that $LB \cong \pi_*(MU \wedge MU)$ is flat as a (left)
module over $L \cong \pi_*(MU)$.  The composition $B \to LB \to
\bZ \otimes_L LB$ is evidently an isomorphism, and similarly for
$C \to LC \to \bZ \otimes_L LC$.

Using~\eqref{eq:etaRmn-in-LB} and Proposition~\ref{prop:etaR-on-mn},
we see that the image of $\eta_R(m_n)$ in $H_*(MU) \otimes \bQ$ is
equal to the images of $\bar b_n \in B$ and $c_n \in C$, since the
remaining terms in each sum are sent to zero under $\pi_*(MU \wedge MU)
\to \pi_*(H\bZ \wedge MU)$.
\end{proof}

\begin{lemma} \label{lem:HurewiczBP}
There is a commutative diagram of graded commutative $\bZ_{(p)}$-algebras
$$
\xymatrix{
& T \ar[d] \ar[dr]^-{\cong} \\
V \ar[r]^-{\eta_R} \ar[d] \ar@(dr,dl)[rr]^-{h}
	& VT \ar[r] & H_*(BP) \ar[d] \\
V \otimes \bQ \ar[rr]^-{h \otimes \bQ}_-{\cong} && H_*(BP) \otimes \bQ
}
$$
where $VT \to H_*(BP)$ is the surjective homomorphism
$$
\pi_*(BP \wedge BP) \longto \pi_*(H\bZ_{(p)} \wedge BP) = H_*(BP)
$$
induced by $BP \to H\bZ_{(p)}$.  The composition $h \: \pi_*(BP) = V \to
H_*(BP)$ is the Hurewicz homomorphism, and $h \otimes \bQ$ sends $\ell_n \in
V \otimes \bQ$ to the image of $t_n \in T$ in $H_*(BP) \otimes \bQ$.
\end{lemma}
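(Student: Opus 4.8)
The plan is to mirror the proof of Lemma~\ref{lem:HurewiczMU} verbatim, replacing the Hopf algebroid $(L, LC)$ (or $(L, LB)$) by its $p$-typical analogue $(V, VT)$ and the map $MU \to H\bZ$ by $BP \to H\bZ_{(p)}$. First I would factor the Hurewicz homomorphism $h \: \pi_*(BP) \to H_*(BP)$ through the composite
$$
BP \cong S \wedge BP \longto BP \wedge BP \longto H\bZ_{(p)} \wedge BP \,.
$$
By the identification of $(V, VT)$ with $(\pi_*(BP), \pi_*(BP \wedge BP))$ recalled in Section~\ref{sec:typ}, the first map realizes the right unit $\eta_R \: V \to VT$ on homotopy, and the second induces the surjection
$$
VT \longto \bZ_{(p)} \otimes_V VT
	\cong \pi_*(H\bZ_{(p)} \wedge_{BP} (BP \wedge BP))
	\cong \pi_*(H\bZ_{(p)} \wedge BP) \,,
$$
where the first isomorphism uses that $VT \cong \pi_*(BP \wedge BP)$ is flat as a left $V \cong \pi_*(BP)$-module. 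This produces the horizontal composite $h$ of the diagram, while the vertical identification $H_*(BP) \cong T$ follows by observing that $T \to VT \to \bZ_{(p)} \otimes_V VT$ is an isomorphism, exactly as $C \to LC \to \bZ \otimes_L LC$ is in the $MU$ case. Commutativity of the diagram and surjectivity of $VT \to H_*(BP)$ are then formal consequences of this factorization, carried out over $\bZ_{(p)}$.

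The final assertion, that $h \otimes \bQ$ sends $\ell_n$ to the image of $t_n$, is where the explicit rational formula for the right unit enters. By~\eqref{eq:etaRelln},
$$
\eta_R(\ell_n) = \sum_{i+j=n} \ell_i t_j^{p^i}
	= \ell_n + \ell_{n-1} t_1^{p^{n-1}} + \dots + \ell_1 t_{n-1}^p + t_n \,.
$$
Under the projection $VT \to \bZ_{(p)} \otimes_V VT = H_*(BP)$ the elements of the left tensor factor $V$ (included via $\eta_L$) are sent to their augmentation, so every such element of positive degree dies; rationally this kills $\ell_i$ for each $i\ge1$. Hence the term $\ell_n$ (with $j=0$) and every term with $i\ge1$ vanishes, leaving only the term with $i=0$ and $j=n$, namely the image of $t_n$. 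Since $V \otimes \bQ \cong \bQ[\ell_n \mid n\ge1]$ and $H_*(BP) \otimes \bQ \cong \bQ[t_n \mid n\ge1]$, the map $h \otimes \bQ$ is thereby identified as the claimed isomorphism $\ell_n \mapsto t_n$.

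I expect no serious obstacle, since each step is the $p$-typical shadow of the corresponding step in Lemma~\ref{lem:HurewiczMU}, and the result is in any case classical. The only point genuinely requiring attention is the bookkeeping in the last paragraph: one must be careful that the cross terms $\ell_i t_j^{p^i}$ with $i\ge1$ really vanish under the projection to $H_*(BP)$, which rests on the $\ell_i$ occupying the left $V$-factor of $VT$ rather than the right $T$-factor. Everything else is formal.
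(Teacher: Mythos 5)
Your proposal is correct and follows essentially the same route as the paper, whose proof simply says to repeat the argument of Lemma~\ref{lem:HurewiczMU} in the $p$-typical setting and to use~\eqref{eq:etaRelln} to compute the image of $h(\ell_n)$ in $\bZ_{(p)} \otimes_V VT \otimes \bQ \cong H_*(BP) \otimes \bQ$. Your factorization through $BP \wedge BP$, the flatness argument identifying $\bZ_{(p)} \otimes_V VT \cong H_*(BP)$, and the observation that only the $i=0$, $j=n$ term of $\eta_R(\ell_n)$ survives are exactly the steps the paper intends.
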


\begin{proof}
The proof is similar to that of Lemma~\ref{lem:HurewiczMU}, using
\eqref{eq:etaRelln} to calculate the image of $h(\ell_n)$
in $\bZ_{(p)} \otimes_V VT \otimes \bQ \cong H_*(BP) \otimes \bQ$.
\end{proof}

\subsection{Algebraic de Rham complexes}
The Hurewicz homomorphism $h \: \pi_*(MU) \to H_*(MU)$ maps $\pi_*(MU)
\cong L = \bZ[x_n \mid n\ge1]$ injectively to $H_*(MU) \cong C = \bZ[c_n
\mid n\ge1]$.  Let
$$
\Omega^1_L \cong L \{dx_n \mid n\ge1\}
	\cong L \otimes \Tor^L_1(\bZ, \bZ)
$$
be the module of K{\"a}hler differentials of $L$ over $\bZ$, and let
$\Omega^*_L$ be the algebraic de Rham complex, with $\Omega^q_L \cong L
\otimes \Tor^L_q(\bZ, \bZ)$ in codegree~$q$.  The exterior differential
$d \: \Omega^q_L \to \Omega^{q+1}_L$ is given by $d(x_{n_0} \, dx_{n_1}
\cdots dx_{n_q}) = dx_{n_0} dx_{n_1} \cdots dx_{n_q}$.  Let us view
$$
\pi_* THH(MU) \cong L \otimes \Tor^C_*(\bZ, \bZ)
$$
as a cohomologically graded object (in addition to the internal,
homotopical grading), with $L \otimes \Tor^C_q(\bZ, \bZ)$ in codegree~$q$.
Let $\sigma'(x) = (-1)^{|x|} \sigma(x)$ denote the left derivation
associated to $\sigma$.
We then have inclusions
$$
(\Omega^*_L, d) \longto (\pi_* THH(MU), \sigma') \longto (\Omega^*_C, d)
$$
of cocomplexes, given in codegree~$q$ by
$$
L \otimes \Tor^L_q(\bZ, \bZ) \subset 
L \otimes \Tor^C_q(\bZ, \bZ) \subset 
C \otimes \Tor^C_q(\bZ, \bZ) \,.
$$
The first inclusion maps $dx_n \in \Omega^1_L$ to $\sigma'(x_n) \in \pi_*
THH(MU)$, while the second inclusion maps $\lambda'_n \in \pi_* THH(MU)$
to $dc_n \in \Omega^1_C$, which corresponds to $[c_n] \in \Tor^C_1(\bZ,
\bZ)$.

Similarly, $\pi_*(BP) \cong V = \bZ_{(p)}[v_n \mid n\ge1]$ maps
injectively by the Hurewicz homomorphism to $H_*(BP) \cong T =
\bZ_{(p)}[t_n \mid n\ge1]$.  We view
$$
\pi_* THH(BP) \cong V \otimes \Tor^T_*(\bZ_{(p)}, \bZ_{(p)})
$$
as a cohomologically graded object, with $V \otimes \Tor^T_q(\bZ_{(p)}, \bZ_{(p)})$
in codegree~$q$.
We then have inclusions
$$
(\Omega^*_V, d) \longto (\pi_* THH(BP), \sigma') \longto (\Omega^*_T, d)
$$
of cocomplexes, given in codegree~$q$ by
$$
V \otimes \Tor^V_q(\bZ_{(p)}, \bZ_{(p)}) \subset 
V \otimes \Tor^T_q(\bZ_{(p)}, \bZ_{(p)}) \subset 
T \otimes \Tor^T_q(\bZ_{(p)}, \bZ_{(p)}) \,.
$$
The first inclusion sends $dv_n \in \Omega^1_V$ to $\sigma'(v_n) \in \pi_*
THH(BP)$, while the second inclusion sends $\lambda_n \in \pi_* THH(BP)$
to $dt_n \in \Omega^1_T$, which corresponds to $[t_n] \in \Tor^T_1(\bZ_{(p)},
\bZ_{(p)})$.

Hence $(\pi_* THH(MU), \sigma')$ is bracketed between the de
Rham complexes $(\Omega^*_L, d)$ and $(\Omega^*_C, d)$, while
$(\pi_* THH(BP), \sigma')$ is bracketed between $(\Omega^*_V, d)$ and
$(\Omega^*_T, d)$.  The induced homomorphisms in cohomology
$$
H_{dR}^*(L) = \bigoplus_q H^q(\Omega^*_L, d) \longto
	H(\pi_* THH(MU), \sigma) \longto
	\bigoplus_q H^q(\Omega^*_C, d) = H_{dR}^*(C)
$$
and
$$
H_{dR}^*(V) = \bigoplus_q H^q(\Omega^*_V, d) \longto
        H(\pi_* THH(BP), \sigma) \longto
        \bigoplus_q H^q(\Omega^*_T, d) = H_{dR}^*(T)
$$
are, however, far from isomorphisms.

\subsection{Further differentials}
The $E_\infty$ ring spectrum map $MU \to H\bZ$ induces a homomorphism
$(\pi_* THH(MU), \sigma) \to (\pi_* THH(\bZ), \sigma)$ of
differential graded algebras, sending $\lambda'_1
\in \pi_3 THH(MU) \cong \bZ$ to a generator $g_3 \in \pi_3 THH(\bZ)
\cong \bZ/2$.  In the circle Tate spectral sequence for $THH(\bZ)$
there is a nonzero differential
$$
d^4(t^{-1}) = t g_3 \,,
$$
see \cite{Rog98}*{Thm.~1.3} and \cite{Rog99b}*{Thm.~1.9(2)}.
By naturality, it follows that there is a nonzero differential
$$
d^4(t^{-1}) = \lambda'_1
$$
in the circle Tate spectral sequence for $THH(MU)$.
It also follows that there are nonzero differentials
$$
d^4(t^i \lambda'_3) = t^{i+2} \lambda'_1 \lambda'_3 \,,
$$
for all $i$ of one parity.

Similarly, the $E_4$ ring spectrum map $BP \to H\bZ_{(p)}$ induces a
differential graded algebra homomorphism $(\pi_* THH(BP), \sigma) \to
(\pi_* THH(\bZ_{(p)}), \sigma)$ sending $\lambda_1 \in \pi_{2p-1} THH(BP)
\cong \bZ_{(p)}$ to a generator $g_{2p-1} \in \pi_{2p-1} THH(\bZ_{(p)})
\cong \bZ/p$.  In the circle Tate spectral sequence for $THH(\bZ_{(p)})$
there is a nonzero differential
$$
d^{2p}(t^{1-p}) \doteq t g_{2p-1}
$$
(see \cite{BM94}*{p.~100} in the odd case),
hence there is a nonzero differential
$$
d^{2p}(t^{1-p}) \doteq t \lambda_1
$$
in the circle Tate spectral sequence for $THH(BP)$.
It follows that there are also nonzero differentials
$$
d^{2p}(t^i \lambda_2) \doteq t^{i+p} \lambda_1 \lambda_2 \,,
$$
for $i$ in all but one congruence class of integers modulo~$p$.

These observations show that after the $d^2$-differentials given by the
$\sigma$-operator there will also be later differentials in these
Tate spectral sequences, originating not only on the horizontal axis.
To determine the precise differential structure will require other
methods than those of the present paper.  A good beginning would
be given by determining the differentials in the $C_p$-Tate
spectral sequence
$$
E^2_{*,*} = \hat H^{-*}(C_p, \pi_* THH(MU))
	\Longrightarrow \pi_* THH(MU)^{tC_p} \,,
$$
where we know by \cite{LNR11} that the target is $p$-adically
equivalent to $THH(MU)$.

\begin{bibdiv}
\begin{biblist}

\bib{Ada74}{book}{
   author={Adams, J. F.},
   title={Stable homotopy and generalised homology},
   note={Chicago Lectures in Mathematics},
   publisher={University of Chicago Press, Chicago, Ill.-London},
   date={1974},
   pages={x+373},
}

\bib{AR05}{article}{
   author={Angeltveit, Vigleik},
   author={Rognes, John},
   title={Hopf algebra structure on topological Hochschild homology},
   journal={Algebr. Geom. Topol.},
   volume={5},
   date={2005},
   pages={1223--1290},
}

\bib{Ara73}{book}{
   author={Araki, Sh\^{o}r\^{o}},
   title={Typical formal groups in complex cobordism and $K$-theory},
   note={Lectures in Mathematics, Department of Mathematics, Kyoto
   University, No. 6},
   publisher={Kinokuniya Book-Store Co., Ltd., Tokyo},
   date={1973},
   pages={v+101 pp. (loose errata)},
}

\bib{AR02}{article}{
   author={Ausoni, Christian},
   author={Rognes, John},
   title={Algebraic $K$-theory of topological $K$-theory},
   journal={Acta Math.},
   volume={188},
   date={2002},
   number={1},
   pages={1--39},
}

\bib{BM13}{article}{
   author={Basterra, Maria},
   author={Mandell, Michael A.},
   title={The multiplication on BP},
   journal={J. Topol.},
   volume={6},
   date={2013},
   number={2},
   pages={285--310},
}


\bib{BCS10}{article}{
   author={Blumberg, Andrew J.},
   author={Cohen, Ralph L.},
   author={Schlichtkrull, Christian},
   title={Topological Hochschild homology of Thom spectra and the free loop
   space},
   journal={Geom. Topol.},
   volume={14},
   date={2010},
   number={2},
   pages={1165--1242},
}

\bib{BM19}{article}{
   author={Blumberg, Andrew J.},
   author={Mandell, Michael A.},
   title={The homotopy groups of the algebraic $K$-theory of the sphere
   spectrum},
   journal={Geom. Topol.},
   volume={23},
   date={2019},
   number={1},
   pages={101--134},
}

\bib{BBLNR14}{article}{
   author={B\"{o}kstedt, Marcel},
   author={Bruner, Robert R.},
   author={Lun\o e-Nielsen, Sverre},
   author={Rognes, John},
   title={On cyclic fixed points of spectra},
   journal={Math. Z.},
   volume={276},
   date={2014},
   number={1-2},
   pages={81--91},
}

\bib{BHM93}{article}{
   author={B\"{o}kstedt, M.},
   author={Hsiang, W. C.},
   author={Madsen, I.},
   title={The cyclotomic trace and algebraic $K$-theory of spaces},
   journal={Invent. Math.},
   volume={111},
   date={1993},
   number={3},
   pages={465--539},
}

\bib{BM94}{article}{
   author={B\"{o}kstedt, M.},
   author={Madsen, I.},
   title={Topological cyclic homology of the integers},
   note={$K$-theory (Strasbourg, 1992)},
   journal={Ast\'{e}risque},
   number={226},
   date={1994},
   pages={7--8, 57--143},
}

\bib{BM95}{article}{
   author={B\"{o}kstedt, M.},
   author={Madsen, I.},
   title={Algebraic $K$-theory of local number fields: the unramified case},
   conference={
      title={Prospects in topology},
      address={Princeton, NJ},
      date={1994},
   },
   book={
      series={Ann. of Math. Stud.},
      volume={138},
      publisher={Princeton Univ. Press, Princeton, NJ},
   },
   date={1995},
   pages={28--57},
}

\bib{BP66}{article}{
   author={Brown, Edgar H., Jr.},
   author={Peterson, Franklin P.},
   title={A spectrum whose $Z_{p}$ cohomology is the algebra of reduced
   $p^{th}$ powers},
   journal={Topology},
   volume={5},
   date={1966},
   pages={149--154},
}

\bib{BFV07}{article}{
   author={Brun, Morten},
   author={Fiedorowicz, Zbigniew},
   author={Vogt, Rainer M.},
   title={On the multiplicative structure of topological Hochschild
   homology},
   journal={Algebr. Geom. Topol.},
   volume={7},
   date={2007},
   pages={1633--1650},
}

\bib{BR05}{article}{
   author={Bruner, Robert R.},
   author={Rognes, John},
   title={Differentials in the homological homotopy fixed point spectral
   sequence},
   journal={Algebr. Geom. Topol.},
   volume={5},
   date={2005},
   pages={653--690},
}

\bib{Car67a}{article}{
   author={Cartier, Pierre},
   title={Groupes formels associ\'{e}s aux anneaux de Witt g\'{e}n\'{e}ralis\'{e}s},
   language={French},
   journal={C. R. Acad. Sci. Paris S\'{e}r. A-B},
   volume={265},
   date={1967},
   pages={A49--A52},
}

\bib{Car67b}{article}{
   author={Cartier, Pierre},
   title={Modules associ\'{e}s \`a un groupe formel commutatif. Courbes typiques},
   language={French},
   journal={C. R. Acad. Sci. Paris S\'{e}r. A-B},
   volume={265},
   date={1967},
   pages={A129--A132},
}

\bib{DGM13}{book}{
   author={Dundas, Bj\o rn Ian},
   author={Goodwillie, Thomas G.},
   author={McCarthy, Randy},
   title={The local structure of algebraic K-theory},
   series={Algebra and Applications},
   volume={18},
   publisher={Springer-Verlag London, Ltd., London},
   date={2013},
   pages={xvi+435},
}

\bib{DR18}{article}{
   author={Dundas, Bj\o rn Ian},
   author={Rognes, John},
   title={Cubical and cosimplicial descent},
   journal={J. Lond. Math. Soc. (2)},
   volume={98},
   date={2018},
   number={2},
   pages={439--460},
}

\bib{EKMM97}{book}{
   author={Elmendorf, A. D.},
   author={Kriz, I.},
   author={Mandell, M. A.},
   author={May, J. P.},
   title={Rings, modules, and algebras in stable homotopy theory},
   series={Mathematical Surveys and Monographs},
   volume={47},
   note={With an appendix by M. Cole},
   publisher={American Mathematical Society, Providence, RI},
   date={1997},
   pages={xii+249},
}

\bib{GM95}{article}{
   author={Greenlees, J. P. C.},
   author={May, J. P.},
   title={Generalized Tate cohomology},
   journal={Mem. Amer. Math. Soc.},
   volume={113},
   date={1995},
   number={543},
   pages={viii+178},
}

\bib{Haz76}{article}{
   author={Hazewinkel, Michiel},
   title={Constructing formal groups. I. The local one dimensional case},
   journal={J. Pure Appl. Algebra},
   volume={9},
   date={1976/77},
   number={2},
   pages={131--149},
}

\bib{HR}{article}{
   author={Hedenlund, Alice},
   author={Rognes, John},
   title={Multiplicative Tate spectral sequences for compact Lie group
	   actions},
   note={In preparation},
}

\bib{Hes18}{article}{
   author={Hesselholt, Lars},
   title={Topological Hochschild homology and the Hasse-Weil zeta function},
   conference={
      title={An alpine bouquet of algebraic topology},
   },
   book={
      series={Contemp. Math.},
      volume={708},
      publisher={Amer. Math. Soc., Providence, RI},
   },
   date={2018},
   pages={157--180},
}

\bib{HM97}{article}{
   author={Hesselholt, Lars},
   author={Madsen, Ib},
   title={On the $K$-theory of finite algebras over Witt vectors of perfect
   fields},
   journal={Topology},
   volume={36},
   date={1997},
   number={1},
   pages={29--101},
}

\bib{HKR62}{article}{
   author={Hochschild, G.},
   author={Kostant, Bertram},
   author={Rosenberg, Alex},
   title={Differential forms on regular affine algebras},
   journal={Trans. Amer. Math. Soc.},
   volume={102},
   date={1962},
   pages={383--408},
   issn={0002-9947},
   review={\MR{142598}},
   doi={10.2307/1993614},
}

\bib{Lan67}{article}{
   author={Landweber, P. S.},
   title={Cobordism operations and Hopf algebras},
   journal={Trans. Amer. Math. Soc.},
   volume={129},
   date={1967},
   pages={94--110},
}

\bib{Lan73}{article}{
   author={Landweber, Peter S.},
   title={Associated prime ideals and Hopf algebras},
   journal={J. Pure Appl. Algebra},
   volume={3},
   date={1973},
   pages={43--58},
}

\bib{Lan75}{article}{
   author={Landweber, Peter S.},
   title={${\rm BP}\sb\ast({\rm BP})$ and typical formal groups},
   journal={Osaka J. Math.},
   volume={12},
   date={1975},
   number={2},
   pages={357--363},
}

\bib{Laz55}{article}{
   author={Lazard, Michel},
   title={Sur les groupes de Lie formels \`a un param\`etre},
   language={French},
   journal={Bull. Soc. Math. France},
   volume={83},
   date={1955},
   pages={251--274},
}

\bib{LQ84}{article}{
   author={Loday, Jean-Louis},
   author={Quillen, Daniel},
   title={Cyclic homology and the Lie algebra homology of matrices},
   journal={Comment. Math. Helv.},
   volume={59},
   date={1984},
   number={4},
   pages={569--591},
   issn={0010-2571},
   review={\MR{780077}},
   doi={10.1007/BF02566367},
}

\bib{LNR11}{article}{
   author={Lun\o e-Nielsen, Sverre},
   author={Rognes, John},
   title={The Segal conjecture for topological Hochschild homology of
   complex cobordism},
   journal={J. Topol.},
   volume={4},
   date={2011},
   number={3},
   pages={591--622},
}

\bib{LNR12}{article}{
   author={Lun\o e-Nielsen, Sverre},
   author={Rognes, John},
   title={The topological Singer construction},
   journal={Doc. Math.},
   volume={17},
   date={2012},
   pages={861--909},
}

\bib{May77}{book}{
   author={May, J. Peter},
   title={$E_{\infty }$ ring spaces and $E_{\infty }$ ring spectra},
   series={Lecture Notes in Mathematics, Vol. 577},
   note={With contributions by Frank Quinn, Nigel Ray, and J\o rgen
   Tornehave},
   publisher={Springer-Verlag, Berlin-New York},
   date={1977},
   pages={268},
}

\bib{MS93}{article}{
   author={McClure, J. E.},
   author={Staffeldt, R. E.},
   title={On the topological Hochschild homology of $b{\rm u}$. I},
   journal={Amer. J. Math.},
   volume={115},
   date={1993},
   number={1},
   pages={1--45},
}

\bib{Mil75}{book}{
   author={Miller, Haynes Robert},
   title={Some algebraic aspects of the Adams--Novikov spectral sequence},
   note={Thesis (Ph.D.)--Princeton University},
   publisher={ProQuest LLC, Ann Arbor, MI},
   date={1975},
   pages={103},
}

\bib{MRW77}{article}{
   author={Miller, Haynes R.},
   author={Ravenel, Douglas C.},
   author={Wilson, W. Stephen},
   title={Periodic phenomena in the Adams-Novikov spectral sequence},
   journal={Ann. of Math. (2)},
   volume={106},
   date={1977},
   number={3},
   pages={469--516},
}

\bib{Mil60}{article}{
   author={Milnor, J.},
   title={On the cobordism ring $\Omega ^{\ast} $ and a complex analogue.
   I},
   journal={Amer. J. Math.},
   volume={82},
   date={1960},
   pages={505--521},
}

\bib{Mos68}{article}{
   author={Mosher, Robert E.},
   title={Some stable homotopy of complex projective space},
   journal={Topology},
   volume={7},
   date={1968},
   pages={179--193},
}

\bib{NS18}{article}{
   author={Nikolaus, Thomas},
   author={Scholze, Peter},
   title={On topological cyclic homology},
   journal={Acta Math.},
   volume={221},
   date={2018},
   number={2},
   pages={203--409},
}

\bib{Nov60}{article}{
   author={Novikov, S. P.},
   title={Some problems in the topology of manifolds connected with the
   theory of Thom spaces},
   journal={Soviet Math. Dokl.},
   volume={1},
   date={1960},
   pages={717--720},
}

\bib{Nov67}{article}{
   author={Novikov, S. P.},
   title={Rings of operations and spectral sequences of Adams type in
   extraordinary cohomology theories. $U$-cobordism and $K$-theory},
   language={Russian},
   journal={Dokl. Akad. Nauk SSSR},
   volume={172},
   date={1967},
   pages={33--36},
}

\bib{Qui69}{article}{
   author={Quillen, Daniel},
   title={On the formal group laws of unoriented and  complex cobordism
   theory},
   journal={Bull. Amer. Math. Soc.},
   volume={75},
   date={1969},
   pages={1293--1298},
}

\bib{Rav86}{book}{
   author={Ravenel, Douglas C.},
   title={Complex cobordism and stable homotopy groups of spheres},
   series={Pure and Applied Mathematics},
   volume={121},
   publisher={Academic Press, Inc., Orlando, FL},
   date={1986},
   pages={xx+413},
}

\bib{Rav92}{book}{
   author={Ravenel, Douglas C.},
   title={Nilpotence and periodicity in stable homotopy theory},
   series={Annals of Mathematics Studies},
   volume={128},
   note={Appendix C by Jeff Smith},
   publisher={Princeton University Press, Princeton, NJ},
   date={1992},
   pages={xiv+209},
}

\bib{Rog98}{article}{
   author={Rognes, John},
   title={Trace maps from the algebraic $K$-theory of the integers (after
   Marcel B\"{o}kstedt)},
   journal={J. Pure Appl. Algebra},
   volume={125},
   date={1998},
   number={1-3},
   pages={277--286},
}

\bib{Rog99a}{article}{
   author={Rognes, John},
   title={The product on topological Hochschild homology of the integers
   with mod $4$ coefficients},
   journal={J. Pure Appl. Algebra},
   volume={134},
   date={1999},
   number={3},
   pages={211--218},
}

\bib{Rog99b}{article}{
   author={Rognes, John},
   title={Topological cyclic homology of the integers at two},
   journal={J. Pure Appl. Algebra},
   volume={134},
   date={1999},
   number={3},
   pages={219--286},
}

\bib{Rog99c}{article}{
   author={Rognes, John},
   title={Algebraic $K$-theory of the two-adic integers},
   journal={J. Pure Appl. Algebra},
   volume={134},
   date={1999},
   number={3},
   pages={287--326},
}

\bib{Rog02}{article}{
   author={Rognes, John},
   title={Two-primary algebraic $K$-theory of pointed spaces},
   journal={Topology},
   volume={41},
   date={2002},
   number={5},
   pages={873--926},
}

\bib{Rog03}{article}{
   author={Rognes, John},
   title={The smooth Whitehead spectrum of a point at odd regular primes},
   journal={Geom. Topol.},
   volume={7},
   date={2003},
   pages={155--184},
}

\end{biblist}
\end{bibdiv}

\affiliationone{%
   John Rognes\\
   Department of Mathematics\\
   University of Oslo\\
   Box 1053, Blindern\\
   NO--0316 Oslo\\
   Norway\\
   \email{rognes@math.uio.no}}

\end{document}